\documentclass[11pt]{article}

\usepackage{geometry}
\usepackage{amsmath}
\usepackage{amssymb}
\usepackage{amsthm}
\usepackage{mathtools}
\usepackage{mathrsfs}  

\usepackage{graphicx}
\usepackage{tikz}
\usepackage{tikz-cd}   
\usetikzlibrary{arrows.meta, positioning, decorations.markings}

\usepackage{microtype}
\usepackage[T1]{fontenc}
\usepackage{lmodern}

\usepackage[numbers,sort&compress]{natbib}

\usepackage[colorlinks=true,
            linkcolor=blue!70!black,
            citecolor=blue!70!black,
            urlcolor=blue!70!black]{hyperref}

\usepackage{doi}
\usepackage[capitalise, nameinlink]{cleveref}

\usepackage{enumitem}
\usepackage{xcolor}
\usepackage{booktabs}  
\usepackage{appendix}  
\usepackage{caption}

\newtheoremstyle{ghriststyle}
    {12pt}                    
    {12pt}                    
    {\itshape}                
    {}                        
    {\scshape}                
    {.}                       
    {.5em}                    
    {}                        

\newtheoremstyle{ghristdefstyle}
    {12pt}                    
    {12pt}                    
    {\normalfont}             
    {}                        
    {\scshape}                
    {.}                       
    {.5em}                    
    {}                        

\newtheoremstyle{ghristremarkstyle}
    {8pt}                     
    {8pt}                     
    {\normalfont}             
    {}                        
    {\itshape}                
    {.}                       
    {.5em}                    
    {}                        

\theoremstyle{ghriststyle}
\newtheorem{theorem}{Theorem}[section]
\newtheorem{lemma}[theorem]{Lemma}
\newtheorem{proposition}[theorem]{Proposition}
\newtheorem{corollary}[theorem]{Corollary}

\theoremstyle{ghristdefstyle}
\newtheorem{definition}[theorem]{Definition}
\newtheorem{example}[theorem]{Example}

\theoremstyle{ghristremarkstyle}
\newtheorem{remark}[theorem]{Remark}

\makeatletter

\newcommand{\thanksblock}[1]{\gdef\@thanksblock{#1}}
\gdef\@thanksblock{} 

\renewcommand{\maketitle}{%
  \begin{center}%
    {\LARGE\bfseries \@title \par}%
    \vskip 1em%
    {\large \lineskip .5em%
      \begin{tabular}[t]{c}%
        \@author
      \end{tabular}\par}%
    \ifx\@thanksblock\@empty\else
      \vskip .75em%
      {\small \@thanksblock\par}%
    \fi
  \end{center}%
  \vskip 1.5em%
}

\makeatother

\date{} 

\usepackage{titlesec}

\titleformat{\section}
    {\normalfont\large\bfseries}
    {\thesection.}
    {0.5em}
    {}

\titleformat{\subsection}
    {\normalfont\normalsize\bfseries}
    {\thesubsection.}
    {0.5em}
    {}

\titleformat{\subsubsection}
    {\normalfont\normalsize\itshape}
    {\thesubsubsection.}
    {0.5em}
    {}

\renewenvironment{abstract}{%
    \begin{center}
        \textsc{Abstract}
    \end{center}
    \begin{quotation}
    \noindent
    \small
}{%
    \end{quotation}
    \vspace{1em}
    \normalsize
}

\title{Selective Adaptation of Beliefs and Communication on Cellular Sheaves}

\author{Vicente Bosca \and Robert Ghrist}

\thanksblock{%
Department of Mathematics, University of Pennsylvania\\
\texttt{vicenteg@sas.upenn.edu}\quad
\texttt{ghrist@math.upenn.edu}%
}

\begin{document}
\maketitle


\begin{abstract}
We extend opinion dynamics on discourse sheaves to incorporate {\em directional stubbornness}: 
agents may hold fixed positions in specified directions of their opinion stalk while remaining 
flexible in others. This converts the equilibrium problem from harmonic extension to a forced 
sheaf equation: the free-opinion component satisfies a sheaf Poisson equation with forcing 
induced by the clamped directions.

We develop a parallel theory for {\em selective learning} of expression policies. When only a 
designated subset of incidence maps may adapt, the resulting gradient flow is sheaf diffusion 
on an auxiliary structure sheaf whose global sections correspond to sheaf structures making a 
fixed opinion profile publicly consistent.

For joint evolution of beliefs and expressions, we give conditions (and regularized variants) 
guaranteeing convergence to nondegenerate equilibria, excluding spurious agreement via vanishing 
opinions or trivialized communication maps. Finally, we derive stagnation bounds in terms of the 
rate ratio between opinion updating and structural adaptation, quantifying when rapid rhetorical 
accommodation masks nearly unchanged beliefs, and conversely when flexible beliefs conform to 
rigid communication norms.
\end{abstract}


\section{Introduction}\label{sec:introduction}

Disagreement is endemic to social life, yet societies function. How is conflict managed? Sometimes people change their minds. But often they do not: they change how they \emph{talk} instead. A politician holds firm convictions while tailoring rhetoric to each audience. Colleagues who disagree on substance adopt diplomatic language to preserve working relationships. Families develop implicit rules about which topics permit frank discussion and which require careful navigation. The distinction between \emph{what one believes} and \emph{how one expresses it} is fundamental to human discourse~\cite{kuran1997,kelman1958,ye2019expressedprivate}.

Mathematical models of opinion dynamics have largely ignored this distinction. 
Classical averaging models~\cite{abelson1964,degroot1974,french1956,friedkin1990} treat communication as transparent: agents exchange opinions directly, and network topology alone determines influence.
Nonlinear alternatives such as bounded-confidence models capture selective interaction based on opinion proximity~\cite{deffuant2000,hegselmann2002,dittmer2001,blondel2010,canuto2012}.
The framework of \emph{discourse sheaves} introduced by Hansen and Ghrist~\cite{hansen2020} enriches this picture by allowing multidimensional opinions and heterogeneous expression channels. Their ``learning to lie'' dynamics permits expression maps to evolve while opinions stay fixed, and they describe joint evolution of both. They also introduce stubborn agents whose opinions remain fixed while influencing their neighbors~\cite{taylor1968,ghaderi2014,tian2018}.
However, their analysis treats stubbornness as an all-or-nothing property of vertices, rather than a directional constraint: an agent is either entirely stubborn or entirely flexible.

This paper extends the theory of stubborn agents to allow \emph{selective rigidity}. An agent may hold fixed positions in certain directions of their opinion space while remaining flexible in others, and may maintain rigid expression policies toward some neighbors while adapting toward others. A vertex may be stubborn in belief but diplomatic in expression, or vice versa. The resulting framework captures the full combinatorics of partial rigidity in networked discourse.

When opinions and expressions evolve simultaneously, we study four scenarios distinguished by which communication channels are permitted to adapt. We prove convergence and characterize when perfect agreement is achievable. Conservation laws ensure the system cannot achieve spurious agreement through vanishing opinions or ceasing communication; equilibrium reflects genuine accommodation rather than disengagement. Conditions on the ratio of update rates then reveal when one variable effectively stagnates while the other equilibrates. When expressions adapt sufficiently faster than beliefs, surface harmony masks unchanged private convictions. The reverse regime produces flexible individuals conforming to rigid communication norms.

The paper proceeds as follows. Section~\ref{sec:prelim} recalls discourse sheaves and the Hansen--Ghrist framework. Section~\ref{sec:stubborn-opinions} introduces stubborn opinions, develops the sheaf of free opinions, and shows that equilibrium is characterized by a sheaf Poisson equation on the sheaf of free opinions. Section~\ref{sec:stubborn-expressions} treats stubborn expressions in parallel, constructing the sheaf of admissible structures and establishing a duality between constrained opinion dynamics and constrained structure adaptation. Section~\ref{sec:joint} analyzes joint dynamics under the four constraint scenarios, providing convergence guarantees and conservation laws that prevent collapse to trivial equilibria. Section~\ref{sec:timescale} develops the timescale separation analysis and derives the stagnation bounds.


\section{Preliminaries: Discourse Sheaves and Opinion Dynamics}
\label{sec:prelim}

We briefly recall the framework of cellular sheaves for opinion dynamics introduced by Hansen and Ghrist~\cite{hansen2020}. The key insight of their approach is that social networks carry richer structure than a simple graph: each agent holds private opinions, each relationship involves a shared communication space, and agents may express their opinions differently to different neighbors. Cellular sheaves capture this layered structure in a single mathematical object.

\subsection{Cellular Sheaves on Graphs}\label{ssec:cellular-sheaves}

Let $G = (V, E)$ be a finite connected graph.

\begin{definition}\label{def:cellular-sheaf}
A \emph{cellular sheaf} $\mathcal{F}$ on $G$ assigns:
\begin{itemize}
    \item a finite-dimensional real inner product space $\mathcal{F}(v)$, called the \emph{stalk}, to each vertex $v \in V$,
    \item a finite-dimensional real inner product space $\mathcal{F}(e)$ to each edge $e \in E$,
    \item a linear \emph{restriction map} $\mathcal{F}_{v \unlhd e}: \mathcal{F}(v) \to \mathcal{F}(e)$ for each incidence $v \in e$.
\end{itemize}
In the context of opinion dynamics, we call such a sheaf a \emph{discourse sheaf}.
\end{definition}

The vertex stalks $\mathcal{F}(v)$ represent the private opinion spaces of agents, the edge stalks $\mathcal{F}(e)$ represent the discourse spaces where neighboring agents communicate, and the restriction maps encode how agents express their private opinions in public discourse. An agent may express different aspects of their opinion to different neighbors, so each edge incident to $v$ has its own restriction map.

\subsection{Cochains and the Sheaf Laplacian}\label{ssec:cochains-laplacian}

The space of \emph{$0$-cochains} is
\begin{align*}
C^0(G; \mathcal{F}) = \bigoplus_{v \in V} \mathcal{F}(v),
\end{align*}
equipped with the inner product $\langle x, y \rangle = \sum_{v \in V} \langle x_v, y_v \rangle_{\mathcal{F}(v)}$. A $0$-cochain $x \in C^0(G; \mathcal{F})$ assigns to each vertex $v$ a vector $x_v \in \mathcal{F}(v)$, representing the private opinions of agent $v$. Similarly, the space of \emph{$1$-cochains} is
\begin{align*}
C^1(G; \mathcal{F}) = \bigoplus_{e \in E} \mathcal{F}(e),
\end{align*}
equipped with the analogous inner product. A $1$-cochain assigns a vector in $\mathcal{F}(e)$ to each edge.

Choosing an arbitrary orientation on each edge $e = u \to v$, the \emph{coboundary operator} $\delta: C^0(G; \mathcal{F}) \to C^1(G; \mathcal{F})$ is defined by
\begin{align*}
(\delta x)_e = \mathcal{F}_{v \unlhd e}(x_v) - \mathcal{F}_{u \unlhd e}(x_u).
\end{align*}
The value $(\delta x)_e$ measures the discrepancy between the expressed opinions of the endpoints of edge $e$: it is zero precisely when both agents, expressing their private opinions through their respective restriction maps, arrive at the same point in the shared discourse space.

The \emph{sheaf Laplacian} is the positive semidefinite operator 
\begin{align}
L_{\mathcal{F}} = \delta^T \delta: C^0(G; \mathcal{F}) \to C^0(G; \mathcal{F}),
\end{align}
where the transpose is taken with respect to the cochain inner products. The quadratic form $\langle x, L_{\mathcal{F}} x \rangle = \|\delta x\|^2$ measures the total disagreement in the network.

\subsection{Cohomology and Global Sections}\label{ssec:cohomology}

The \emph{zeroth cohomology}, or space of \emph{global sections}, is
\begin{align*}
H^0(G; \mathcal{F}) = \ker(\delta) = \ker(L_{\mathcal{F}}).
\end{align*}
Global sections are opinion distributions in which all expressed opinions agree: every pair of adjacent agents, when expressing their private opinions through the restriction maps, arrive at the same point in the shared discourse space. The equality $\ker(\delta) = \ker(L_{\mathcal{F}})$ is known as the sheaf Hodge theorem~\cite[Theorem 2.2]{hansen2020}.

\subsection{Opinion Dynamics on Discourse Sheaves}\label{ssec:opinion-dynamics}

The fundamental convergence result for opinion dynamics on discourse sheaves is the following:

\begin{theorem}[Hansen and Ghrist {\cite[Theorem 4.1]{hansen2020}}]
\label{thm:hansen-ghrist}
Solutions to the sheaf diffusion equation
\begin{equation}
\frac{dx}{dt} = -\alpha L_{\mathcal{F}} x, \qquad \alpha > 0,
\end{equation}
converge exponentially to the orthogonal projection of $x(0)$ onto $H^0(G; \mathcal{F})$.
\end{theorem}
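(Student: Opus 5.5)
The plan is to diagonalize the linear system using the spectral theorem for the symmetric operator $L_{\mathcal{F}} = \delta^T\delta$. Since $L_{\mathcal{F}}$ is self-adjoint and positive semidefinite on the finite-dimensional inner product space $C^0(G;\mathcal{F})$, there is an orthonormal eigenbasis $\{\phi_i\}$ with eigenvalues $0 = \lambda_1 = \dots = \lambda_k < \lambda_{k+1} \le \dots \le \lambda_N$. Here $k = \dim \ker L_{\mathcal{F}}$, and by the sheaf Hodge theorem recalled above, $\ker L_{\mathcal{F}} = H^0(G;\mathcal{F})$. The solution of the linear ODE is $x(t) = e^{-\alpha t L_{\mathcal{F}}} x(0)$, which in this basis reads
\begin{align*}
x(t) = \sum_{i=1}^N e^{-\alpha \lambda_i t}\langle x(0),\phi_i\rangle \phi_i .
\end{align*}

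The components with $\lambda_i = 0$ are constant in time, and their sum is exactly the orthogonal projection $P x(0)$ onto $H^0(G;\mathcal{F})$, because $\{\phi_1,\dots,\phi_k\}$ is an orthonormal basis of that subspace. For the remaining components, orthonormality gives
\begin{align*}
\|x(t) - P x(0)\|^2 = \sum_{i>k} e^{-2\alpha\lambda_i t}\langle x(0),\phi_i\rangle^2 \le e^{-2\alpha \lambda_{k+1} t}\,\|x(0) - Px(0)\|^2 .
\end{align*}
This is exponential convergence at rate $\alpha\lambda_{k+1}$, the smallest positive eigenvalue, i.e.\ the spectral gap. If $L_{\mathcal{F}} = 0$, the statement is trivial.

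As an alternative, coordinate-free route, I would write $y = x - Px(0)$. Then $Py(t)$ stays $0$, since $P L_{\mathcal{F}} = L_{\mathcal{F}} P = 0$ by self-adjointness. Differentiating $\|y\|^2$ gives $\tfrac{d}{dt}\|y\|^2 = -2\alpha\|\delta y\|^2 \le -2\alpha\lambda_{k+1}\|y\|^2$ on $(\ker L_{\mathcal{F}})^\perp$, and Grönwall finishes the argument.

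There is no real obstacle here. The only points needing care are that the transpose is taken with respect to the stalk inner products, so that $L_{\mathcal{F}}$ is genuinely self-adjoint and its eigenbasis is orthonormal in the same inner product that defines "orthogonal projection," and that the kernel identification is supplied by the Hodge theorem.
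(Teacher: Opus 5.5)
Your proposal is correct and is essentially the standard argument: diagonalize the self-adjoint positive semidefinite $L_{\mathcal{F}}$, identify its kernel with $H^0(G;\mathcal{F})$, and observe that the remaining modes decay at rate $\alpha$ times the smallest positive eigenvalue. The paper gives no proof of its own here and simply cites Hansen and Ghrist; it uses the same reasoning (solve via the matrix exponential, and note that $e^{-t\alpha L}$ converges to the projection onto the kernel) in its proof of Theorem~\ref{thm:stubborn-directions}.
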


Under this dynamics, agents adjust their private opinions to minimize total disagreement. The limit represents a harmonic configuration: a compromise that balances the conflicting demands imposed by the network structure.

When a subset $U \subseteq V$ of agents maintains fixed opinions, the relevant object becomes the \emph{relative cohomology} $H^0(G, U; \mathcal{F})$, consisting of global sections that vanish on $U$. In this setting, the dynamics reduce to a Dirichlet problem on the complement of $U$, and the limit is the harmonic extension of the boundary values imposed by stubborn agents that is nearest to the initial state. Uniqueness of this limit is guaranteed when $H^0(G, U; \mathcal{F}) = 0$.


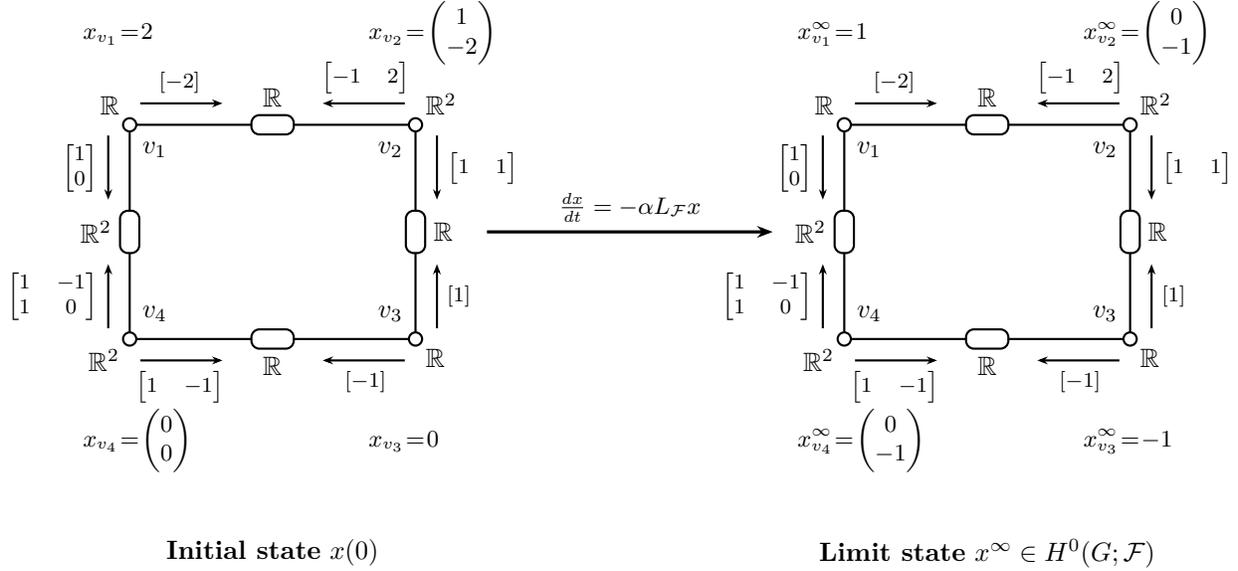
\begin{figure}[ht]
\centering
\begin{tikzpicture}[
    vertex/.style={circle, fill=white, draw=black, thick, minimum size=5pt, inner sep=0pt},
    edge stalk h/.style={rectangle, rounded corners=3pt, fill=white, draw=black, thick, minimum width=16pt, minimum height=4pt},
    edge stalk v/.style={rectangle, rounded corners=3pt, fill=white, draw=black, thick, minimum width=4pt, minimum height=16pt},
    arrow/.style={-{Stealth[length=4pt]}, thick},
    every node/.style={font=\small},
    scale=0.95
]


\draw[thick] (0,0) -- (4.0,0) -- (4.0,3) -- (0,3) -- cycle;

\node[edge stalk h] (e_top_L) at (2.0,3) {};
\node[edge stalk h] (e_bot_L) at (2.0,0) {};
\node[edge stalk v] (e_left_L) at (0,1.5) {};
\node[edge stalk v] (e_right_L) at (4.0,1.5) {};

\node[vertex] (v1_L) at (0,3) {};
\node[vertex] (v2_L) at (4.0,3) {};
\node[vertex] (v3_L) at (4.0,0) {};
\node[vertex] (v4_L) at (0,0) {};

\node at (0.35,2.65) {$v_1$};
\node at (3.65,2.65) {$v_2$};
\node at (3.65,0.35) {$v_3$};
\node at (0.35,0.35) {$v_4$};

\node[above left] at (v1_L) {$\mathbb{R}$};
\node[above right] at (v2_L) {$\mathbb{R}^2$};
\node[below right] at (v3_L) {$\mathbb{R}$};
\node[below left] at (v4_L) {$\mathbb{R}^2$};

\node[anchor=base east, font=\footnotesize, inner sep=0pt] at (-0.15,4.2) {$x_{v_1}$};
\node[anchor=base, font=\footnotesize, inner sep=0pt] at (0,4.2) {$=$};
\node[anchor=base west, font=\footnotesize, inner sep=0pt] at (0.15,4.2) {$2$};
\node[anchor=base east, font=\footnotesize, inner sep=0pt] at (3.85,4.2) {$x_{v_2}$};
\node[anchor=base, font=\footnotesize, inner sep=0pt] at (4.0,4.2) {$=$};
\node[anchor=base west, font=\footnotesize, inner sep=0pt] at (4.15,4.2) {$\begin{pmatrix} 1 \\ -2 \end{pmatrix}$};
\node[anchor=base east, font=\footnotesize, inner sep=0pt] at (-0.15,-1.5) {$x_{v_4}$};
\node[anchor=base, font=\footnotesize, inner sep=0pt] at (0,-1.5) {$=$};
\node[anchor=base west, font=\footnotesize, inner sep=0pt] at (0.15,-1.5) {$\begin{pmatrix} 0 \\ 0 \end{pmatrix}$};
\node[anchor=base east, font=\footnotesize, inner sep=0pt] at (3.85,-1.5) {$x_{v_3}$};
\node[anchor=base, font=\footnotesize, inner sep=0pt] at (4.0,-1.5) {$=$};
\node[anchor=base west, font=\footnotesize, inner sep=0pt] at (4.15,-1.5) {$0$};

\node[above=3pt] at (e_top_L) {$\mathbb{R}$};
\node[below=3pt] at (e_bot_L) {$\mathbb{R}$};
\node[left=3pt] at (e_left_L) {$\mathbb{R}^2$};
\node[right=3pt] at (e_right_L) {$\mathbb{R}$};

\draw[arrow] (0.15,3.3) -- (1.3,3.3);
\node[above, font=\scriptsize] at (0.7,3.3) {$[-2]$};

\draw[arrow] (-0.3,2.85) -- (-0.3,1.95);
\node[left, font=\scriptsize] at (-0.3,2.4) {$\begin{bmatrix} 1 \\ 0 \end{bmatrix}$};

\draw[arrow] (3.85,3.3) -- (2.7,3.3);
\node[above, font=\scriptsize] at (3.3,3.3) {$\begin{bmatrix} -1 & 2 \end{bmatrix}$};

\draw[arrow] (4.3,2.85) -- (4.3,1.95);
\node[right, font=\scriptsize] at (4.3,2.4) {$\begin{bmatrix} 1 & 1 \end{bmatrix}$};

\draw[arrow] (4.3,0.15) -- (4.3,1.05);
\node[right, font=\scriptsize] at (4.3,0.6) {$[1]$};

\draw[arrow] (3.85,-0.3) -- (2.7,-0.3);
\node[below, font=\scriptsize] at (3.3,-0.3) {$[-1]$};

\draw[arrow] (0.15,-0.3) -- (1.3,-0.3);
\node[below, font=\scriptsize] at (0.7,-0.3) {$\begin{bmatrix} 1 & -1 \end{bmatrix}$};

\draw[arrow] (-0.3,0.15) -- (-0.3,1.05);
\node[left, font=\scriptsize] at (-0.3,0.6) {$\begin{bmatrix} 1 & -1 \\ 1 & 0 \end{bmatrix}$};

\node[below=72pt] at (2.0,0) {\textbf{Initial state} $x(0)$};

\begin{scope}[xshift=10.0cm]

\draw[thick] (0,0) -- (4.0,0) -- (4.0,3) -- (0,3) -- cycle;

\node[edge stalk h] (e_top_R) at (2.0,3) {};
\node[edge stalk h] (e_bot_R) at (2.0,0) {};
\node[edge stalk v] (e_left_R) at (0,1.5) {};
\node[edge stalk v] (e_right_R) at (4.0,1.5) {};

\node[vertex] (v1_R) at (0,3) {};
\node[vertex] (v2_R) at (4.0,3) {};
\node[vertex] (v3_R) at (4.0,0) {};
\node[vertex] (v4_R) at (0,0) {};

\node at (0.35,2.65) {$v_1$};
\node at (3.65,2.65) {$v_2$};
\node at (3.65,0.35) {$v_3$};
\node at (0.35,0.35) {$v_4$};

\node[above left] at (v1_R) {$\mathbb{R}$};
\node[above right] at (v2_R) {$\mathbb{R}^2$};
\node[below right] at (v3_R) {$\mathbb{R}$};
\node[below left] at (v4_R) {$\mathbb{R}^2$};

\node[anchor=base east, font=\footnotesize, inner sep=0pt] at (-0.15,4.2) {$x^\infty_{v_1}$};
\node[anchor=base, font=\footnotesize, inner sep=0pt] at (0,4.2) {$=$};
\node[anchor=base west, font=\footnotesize, inner sep=0pt] at (0.15,4.2) {$1$};
\node[anchor=base east, font=\footnotesize, inner sep=0pt] at (3.85,4.2) {$x^\infty_{v_2}$};
\node[anchor=base, font=\footnotesize, inner sep=0pt] at (4.0,4.2) {$=$};
\node[anchor=base west, font=\footnotesize, inner sep=0pt] at (4.15,4.2) {$\begin{pmatrix} 0 \\ -1 \end{pmatrix}$};
\node[anchor=base east, font=\footnotesize, inner sep=0pt] at (-0.15,-1.5) {$x^\infty_{v_4}$};
\node[anchor=base, font=\footnotesize, inner sep=0pt] at (0,-1.5) {$=$};
\node[anchor=base west, font=\footnotesize, inner sep=0pt] at (0.15,-1.5) {$\begin{pmatrix} 0 \\ -1 \end{pmatrix}$};
\node[anchor=base east, font=\footnotesize, inner sep=0pt] at (3.85,-1.5) {$x^\infty_{v_3}$};
\node[anchor=base, font=\footnotesize, inner sep=0pt] at (4.0,-1.5) {$=$};
\node[anchor=base west, font=\footnotesize, inner sep=0pt] at (4.15,-1.5) {$-1$};

\node[above=3pt] at (e_top_R) {$\mathbb{R}$};
\node[below=3pt] at (e_bot_R) {$\mathbb{R}$};
\node[left=3pt] at (e_left_R) {$\mathbb{R}^2$};
\node[right=3pt] at (e_right_R) {$\mathbb{R}$};

\draw[arrow] (0.15,3.3) -- (1.3,3.3);
\node[above, font=\scriptsize] at (0.7,3.3) {$[-2]$};

\draw[arrow] (-0.3,2.85) -- (-0.3,1.95);
\node[left, font=\scriptsize] at (-0.3,2.4) {$\begin{bmatrix} 1 \\ 0 \end{bmatrix}$};

\draw[arrow] (3.85,3.3) -- (2.7,3.3);
\node[above, font=\scriptsize] at (3.3,3.3) {$\begin{bmatrix} -1 & 2 \end{bmatrix}$};

\draw[arrow] (4.3,2.85) -- (4.3,1.95);
\node[right, font=\scriptsize] at (4.3,2.4) {$\begin{bmatrix} 1 & 1 \end{bmatrix}$};

\draw[arrow] (4.3,0.15) -- (4.3,1.05);
\node[right, font=\scriptsize] at (4.3,0.6) {$[1]$};

\draw[arrow] (3.85,-0.3) -- (2.7,-0.3);
\node[below, font=\scriptsize] at (3.3,-0.3) {$[-1]$};

\draw[arrow] (0.15,-0.3) -- (1.3,-0.3);
\node[below, font=\scriptsize] at (0.7,-0.3) {$\begin{bmatrix} 1 & -1 \end{bmatrix}$};

\draw[arrow] (-0.3,0.15) -- (-0.3,1.05);
\node[left, font=\scriptsize] at (-0.3,0.6) {$\begin{bmatrix} 1 & -1 \\ 1 & 0 \end{bmatrix}$};

\node[below=72pt] at (2.0,0) {\textbf{Limit state} $x^\infty \in H^0(G; \mathcal{F})$};

\end{scope}

\draw[-{Stealth[length=6pt]}, very thick] (5.0,1.5) -- (9.0,1.5);
\node[above, font=\footnotesize] at (7.0,1.5) {$\frac{dx}{dt} = -\alpha L_{\mathcal{F}} x$};

\end{tikzpicture}
\caption{Sheaf diffusion on a discourse sheaf. \textbf{Left:} Initial opinion distribution $x(0)$ with discrepancy $\|\delta x(0)\|^2 = 6$. \textbf{Right:} The limit $x^\infty = P_{H^0}(x(0))$, the orthogonal projection onto the space of global sections. At equilibrium, all expressed opinions agree: both endpoints of each edge map to the same value in the discourse space ($-2$ on the top edge, $-1$ on the right, $1$ on the bottom, and $(1,0)^T$ on the left).}
\label{fig:sheaf-diffusion-example}
\end{figure}


\section{Stubborn Opinions}\label{sec:stubborn-opinions}

Not all opinions are equally negotiable: a person may be immovable on core values while remaining open to persuasion on peripheral issues. In the classical formulation of stubborn agents on discourse sheaves, certain vertices have their entire opinion spaces fixed~\cite{taylor1968,ghaderi2014,tian2018}. We generalize this to a setting where agents may be stubborn in specific conceptual directions while remaining flexible in others. This structure is captured by decomposing each agent's opinion space into fixed and free components.

\subsection{The Sheaf of Free Opinions}\label{ssec:sheaf-free-opinions}

To formalize partial stubbornness, we decompose each agent's opinion space into stubborn and free components, then construct a new sheaf that tracks only the free opinions.

\begin{definition}[Sheaf of Free Opinions]
Let $U \subseteq V$ be the set of stubborn vertices. For each $v \in V$, let $S_v \subseteq \mathcal{F}(v)$ be the subspace of stubborn directions, with $S_v = \{0\}$ for $v \notin U$. Let $T_v = S_v^\perp$ denote the orthogonal complement, the subspace of free opinions. The \emph{sheaf of free opinions} $\mathcal{Q}$ has stalks
\begin{align*}
\mathcal{Q}(v) = T_v, \qquad \mathcal{Q}(e) = \mathcal{F}(e),
\end{align*}
and restriction maps $\mathcal{Q}_{v \unlhd e}: T_v \to \mathcal{F}(e)$ defined by
\begin{align*}
\mathcal{Q}_{v \unlhd e} = \mathcal{F}_{v \unlhd e} \circ \iota_v,
\end{align*}
where $\iota_v: T_v \hookrightarrow \mathcal{F}(v)$ is the inclusion.
\end{definition}

Note that a vertex $v \in U$ may still have a nontrivial free subspace $T_v$; only its $S_v$-coordinates are clamped. Vertices in $F = V \setminus U$ have $S_v = \{0\}$, so their entire opinion vectors evolve. 

The sheaf $\mathcal{Q}$ inherits the discourse structure of $\mathcal{F}$ but restricts attention to the negotiable components of each agent's opinion. The edge stalks remain unchanged because the discourse spaces themselves are not constrained; only the opinions entering them are. The restriction maps of $\mathcal{Q}$ are simply the original maps composed with the inclusion of the free subspace.

The decomposition $\mathcal{F}(v) = S_v \oplus T_v$ induces a splitting of the global cochain space
\begin{align*}
C^0(G; \mathcal{F}) = C^0(S) \oplus C^0(\mathcal{Q}),
\end{align*}
where $C^0(S) = \bigoplus_v S_v$ collects all stubborn directions and $C^0(\mathcal{Q}) = \bigoplus_v T_v$ collects all free directions. We write $u \in C^0(S)$ for the vector encoding the stubborn opinion values, $\iota_S$ and $\iota_{\mathcal{Q}}$ for the natural inclusions into $C^0(G; \mathcal{F})$, and $P_S$ and $P_{\mathcal{Q}}$ for the corresponding orthogonal projections.

The sheaf $\mathcal{Q}$ is a subsheaf of $\mathcal{F}$: the vertex stalks satisfy $T_v \subseteq \mathcal{F}(v)$, the edge stalks coincide, and the restriction maps of $\mathcal{F}$ carry $T_v$ into $\mathcal{F}(e) = \mathcal{Q}(e)$. The quotient $\mathcal{S} = \mathcal{F}/\mathcal{Q}$ has stalks $\mathcal{S}(v) = S_v$ at vertices and $\mathcal{S}(e) = 0$ at edges (since $\mathcal{Q}(e) = \mathcal{F}(e)$), giving a short exact sequence of sheaves
\[
0 \to \mathcal{Q} \hookrightarrow \mathcal{F} \twoheadrightarrow \mathcal{S} \to 0.
\]
The induced long exact sequence in cohomology includes a connecting homomorphism $\partial: C^0(S) \to H^1(G; \mathcal{Q})$ that measures the obstruction to perfect agreement. When $\partial(u) = 0$, the stubborn configuration is \emph{compatible}: there exists an opinion distribution achieving zero disagreement across all edges. When $\partial(u) \neq 0$, the configuration is \emph{incompatible}: residual disagreement is unavoidable regardless of how free agents adjust. The cohomology class $\partial(u)$ quantifies this topological obstruction to harmony, with a torsor-theoretic interpretation developed in the context of affine sheaves by Gould~\cite{gould2024hilbert}; see Appendix~\ref{app:exact-sequences} for the full development.

\subsection{Constrained Opinion Dynamics}\label{ssec:constrained-opinion-dynamics}

With the sheaf of free opinions in hand, we can formulate the dynamics. Stubborn opinions remain fixed while free opinions evolve to reduce disagreement, subject to the influence of their stubborn neighbors.

To simplify notation, we write $\tilde{u} = \iota_S(u)$ and $\tilde{y} = \iota_{\mathcal{Q}}(y)$ for the embeddings into the full cochain space $C^0(G; \mathcal{F})$, so that $x = \tilde{u} + \tilde{y}$. Under the splitting $C^0(G; \mathcal{F}) = C^0(S) \oplus C^0(\mathcal{Q})$, the sheaf Laplacian decomposes as
\begin{align*}
L_{\mathcal{F}} = \begin{bmatrix} L_{SS} & L_{SQ} \\ L_{QS} & L_{QQ} \end{bmatrix},
\end{align*}
where $L_{QQ} = L_{\mathcal{Q}}$ is the Laplacian of the sheaf of free opinions. The off-diagonal block $L_{QS}$ captures how stubborn opinions influence the free dynamics: the term $L_{QS} u$ acts as a forcing that drives free opinions away from zero even when they would otherwise relax to consensus.

\begin{theorem}[Convergence to Equilibrium]
\label{thm:stubborn-directions}
Let $\mathcal{F}$ be a discourse sheaf on a connected graph $G = (V, E)$, and let $\mathcal{Q}$ be the sheaf of free opinions. Fix a stubborn configuration $u \in C^0(S)$, and let $x = \tilde{u} + \tilde{y}$ where $y \in C^0(\mathcal{Q})$ evolves according to the projected Laplacian dynamics
\begin{equation}
\label{eq:stubborn-dynamics}
\frac{dy}{dt} = -\alpha \bigl( L_{\mathcal{Q}} y + L_{QS} u \bigr), \qquad \alpha > 0.
\end{equation}

For any initial condition $y(0)$, the system converges exponentially to a limit $y^\infty \in C^0(\mathcal{Q})$ satisfying the sheaf Poisson equation

\begin{equation}\label{eq:poisson}
L_{\mathcal{Q}} y^\infty = -L_{QS} u.
\end{equation}
The solution set forms an affine subspace $\mathcal{A}$ with direction space $H^0(G; \mathcal{Q}) = \ker(L_{\mathcal{Q}})$, and the dynamics selects the point in $\mathcal{A}$ closest to the initial condition:
\begin{equation}
\label{eq:y_inf}
y^\infty = y(0) - L_{\mathcal{Q}}^\dagger P_{\mathcal{Q}} L_{\mathcal{F}} x(0).
\end{equation}
The total state $x^\infty = \tilde{u} + \tilde{y}^\infty$ minimizes the global disagreement energy subject to the stubborn constraints:
\begin{align*}
x^\infty \in \operatorname*{argmin}_{x:\, P_S(x) = u} \|\delta x\|^2.
\end{align*}
\end{theorem}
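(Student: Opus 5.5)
The plan is to treat \eqref{eq:stubborn-dynamics} as a linear ODE with constant forcing $b = L_{QS}u$ and work in the orthogonal decomposition $C^0(\mathcal{Q}) = \ker L_{\mathcal{Q}} \oplus \operatorname{im} L_{\mathcal{Q}}$, available because $L_{\mathcal{Q}} = \delta_{\mathcal{Q}}^T\delta_{\mathcal{Q}}$ is symmetric positive semidefinite. The first step is to show that $b \in \operatorname{im} L_{\mathcal{Q}}$. Write $\delta_{\mathcal{F}} = [\,\delta_S \;\; \delta_{\mathcal{Q}}\,]$ under the splitting, where $\delta_{\mathcal{Q}} = \delta_{\mathcal{F}}\iota_{\mathcal{Q}}$ is the coboundary of $\mathcal{Q}$ because $\mathcal{Q}_{v\unlhd e} = \mathcal{F}_{v\unlhd e}\circ\iota_v$. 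Then $L_{QS} = \delta_{\mathcal{Q}}^T\delta_S$, so $b \in \operatorname{im}\delta_{\mathcal{Q}}^T = (\ker\delta_{\mathcal{Q}})^\perp = (\ker L_{\mathcal{Q}})^\perp = \operatorname{im} L_{\mathcal{Q}}$, using the Hodge identity $\ker\delta = \ker L$ from Section~\ref{ssec:cohomology} applied to $\mathcal{Q}$. This is the key point: it guarantees that the Poisson equation \eqref{eq:poisson} is solvable, regardless of whether $\partial(u)$ vanishes, and that the forcing never drives the kernel component.

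Next, decompose $y = y_0 + y_1$ with $y_0 \in H^0(G;\mathcal{Q})$ and $y_1 \in \operatorname{im} L_{\mathcal{Q}}$. Projecting the ODE gives $\dot y_0 = 0$, so $y_0(t) = P_{\ker} y(0)$ is conserved, and $\dot y_1 = -\alpha L_{\mathcal{Q}}(y_1 + L_{\mathcal{Q}}^\dagger b)$. On $\operatorname{im} L_{\mathcal{Q}}$ the operator $L_{\mathcal{Q}}$ is invertible with smallest eigenvalue $\lambda > 0$, so $z = y_1 + L_{\mathcal{Q}}^\dagger b$ obeys $\|z(t)\| \le e^{-\alpha\lambda t}\|z(0)\|$. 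Hence $y(t)$ converges exponentially to
\[
y^\infty = P_{\ker}y(0) - L_{\mathcal{Q}}^\dagger b = y(0) - L_{\mathcal{Q}}^\dagger\bigl(L_{\mathcal{Q}}y(0) + L_{QS}u\bigr),
\]
using $L_{\mathcal{Q}}^\dagger L_{\mathcal{Q}} = I - P_{\ker}$. Since $P_{\mathcal{Q}}L_{\mathcal{F}}x(0) = L_{QS}u + L_{QQ}y(0)$, this is exactly \eqref{eq:y_inf}, and $L_{\mathcal{Q}}y^\infty = -b$ gives \eqref{eq:poisson}.

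For the affine structure, the solution set of \eqref{eq:poisson} is $-L_{\mathcal{Q}}^\dagger b + \ker L_{\mathcal{Q}}$, which is nonempty by the first step. Because $y(0) - y^\infty = L_{\mathcal{Q}}^\dagger(\cdots) \in \operatorname{im} L_{\mathcal{Q}} \perp \ker L_{\mathcal{Q}}$, the point $y^\infty$ is the orthogonal projection of $y(0)$ onto $\mathcal{A}$, that is, the closest point. Finally, for the minimization claim, the energy $E(y) = \|\delta_S u + \delta_{\mathcal{Q}}y\|^2$ is a convex quadratic in $y$ with gradient $2(L_{\mathcal{Q}}y + L_{QS}u)$. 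Its minimizers are therefore exactly the solutions of \eqref{eq:poisson}, and this set equals $\operatorname{argmin}\{\|\delta x\|^2 : P_S x = u\}$ under $x = \tilde u + \tilde y$.

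The main obstacle is the first step, the range condition $L_{QS}u \in \operatorname{im} L_{\mathcal{Q}}$. Without it the kernel component would drift linearly and the dynamics would not converge. The factorization $L_{QS} = \delta_{\mathcal{Q}}^T\delta_S$ resolves it, so the care needed is in identifying the block decomposition of $\delta_{\mathcal{F}}$ correctly.
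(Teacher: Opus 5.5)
Your proof is correct and follows the paper's route. You establish $\operatorname{im} L_{QS} \subseteq \operatorname{im} L_{\mathcal{Q}}$ so that the Poisson equation is solvable, split off the conserved $\ker L_{\mathcal{Q}}$ component, let the remaining component relax exponentially, and characterize the limit by the first-order condition plus convexity. The differences are minor: you get the range inclusion from the factorization $L_{QS} = \delta_{\mathcal{Q}}^T\delta_S$ rather than from positive semidefiniteness of $L_{\mathcal{F}}$, and you decay the $\operatorname{im} L_{\mathcal{Q}}$ component directly instead of writing out the matrix-exponential solution, which also lets you verify explicitly the closest-point property that the paper's proof leaves implicit.
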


The proof proceeds by analyzing the block structure of the Laplacian. The key step is showing that the Poisson equation admits a solution, which requires $\operatorname{im}(L_{QS}) \subseteq \operatorname{im}(L_{\mathcal{Q}})$. This inclusion follows from the positive semidefiniteness of $L_{\mathcal{F}}$: any element of $\ker(L_{\mathcal{Q}})$, when embedded into the full cochain space, must lie in $\ker(L_{\mathcal{F}})$, and hence in $\ker(L_{SQ})$. Taking orthogonal complements and using the pseudoinverse $L_{\mathcal{Q}}^\dagger$ yields the result.

\begin{proof}
The dynamics \eqref{eq:stubborn-dynamics} form a linear inhomogeneous system
\[
\frac{dy}{dt} = -\alpha \left( L_{\mathcal{Q}} y + L_{QS} u \right)
\]
driven by the constant forcing $L_{QS} u$. Since $L_{\mathcal{F}}$ is positive semidefinite, so is its principal submatrix $L_{\mathcal{Q}}$. The equilibrium condition $L_{\mathcal{Q}} y = -L_{QS} u$ admits a solution provided $\operatorname{im}(L_{QS}) \subseteq \operatorname{im}(L_{\mathcal{Q}})$.

To verify this inclusion, let $z \in \ker(L_{\mathcal{Q}})$. Then
\begin{align*}
\langle \tilde{z}, L_{\mathcal{F}} \tilde{z} \rangle = z^T L_{\mathcal{Q}} z = 0.
\end{align*}
Since $L_{\mathcal{F}} \succeq 0$, this implies $\tilde{z} \in \ker(L_{\mathcal{F}})$, hence $L_{SQ} z = P_S L_{\mathcal{F}} \tilde{z} = 0$. Thus $\ker(L_{\mathcal{Q}}) \subseteq \ker(L_{SQ})$, and taking orthogonal complements yields $\operatorname{im}(L_{QS}) \subseteq \operatorname{im}(L_{\mathcal{Q}})$.

The explicit solution is
\begin{align*}
y(t) = e^{-t\alpha L_{\mathcal{Q}}} y(0) - L_{\mathcal{Q}}^\dagger \bigl(I - e^{-t\alpha L_{\mathcal{Q}}}\bigr) L_{QS} u.
\end{align*}
As $t \to \infty$, the matrix exponential $e^{-t\alpha L_{\mathcal{Q}}}$ converges to the projection $P_{H^0}$ onto $\ker(L_{\mathcal{Q}})$, yielding
\begin{align*}
y^\infty = P_{H^0}(y(0)) - L_{\mathcal{Q}}^\dagger L_{QS} u.
\end{align*}
Using $P_{H^0} = I - L_{\mathcal{Q}}^\dagger L_{\mathcal{Q}}$ and $L_{QS} u = P_{\mathcal{Q}} L_{\mathcal{F}} \tilde{u}$ gives the equivalent expression \eqref{eq:y_inf}.

For the variational characterization, observe that the Poisson equation $L_{\mathcal{Q}} y^\infty + L_{QS} u = 0$ is equivalent to $P_{\mathcal{Q}}(L_{\mathcal{F}} x^\infty) = 0$. This is the first-order condition for minimizing $\frac{1}{2}\|\delta x\|^2$ over the affine subspace $\{x : P_S(x) = u\}$: at a minimum, the gradient $L_{\mathcal{F}} x$ must be orthogonal to all feasible directions. Convexity ensures this critical point is the global minimum.
\end{proof}

\subsection{The Sheaf Poisson Equation}\label{ssec:poisson-equation}

The equilibrium condition \eqref{eq:poisson} deserves emphasis: it is a \emph{sheaf Poisson equation}, the natural analogue of the classical Poisson equation $\Delta u = f$ from potential theory. The Laplacian $L_{\mathcal{Q}}$ of the free sheaf plays the role of the differential operator, and the forcing term $L_{QS} u$ encodes how stubborn opinions propagate through the network to influence free agents.

The solution set of the Poisson equation forms an affine subspace
\begin{align*}
\mathcal{A} = \{y \in C^0(\mathcal{Q}) : L_{\mathcal{Q}} y = -L_{QS} u\}
\end{align*}
with direction space $H^0(G; \mathcal{Q}) = \ker(L_{\mathcal{Q}})$. The dynamics select a particular point in $\mathcal{A}$ according to the initial condition. Several equivalent expressions illuminate different aspects of this selection:
\begin{align}
y^\infty &= \operatorname*{argmin}_{y \in \mathcal{A}} \|y - y(0)\|, \label{eq:y_inf_argmin} \\
&= P_{H^0}(y(0)) - L_{\mathcal{Q}}^\dagger L_{QS} u, \label{eq:y_inf_proj} \\
&= y(0) - L_{\mathcal{Q}}^\dagger (L_{\mathcal{Q}} y(0) + L_{QS} u). \label{eq:y_inf_correction}
\end{align}
Expression \eqref{eq:y_inf_argmin} says that among all solutions to the Poisson equation, the dynamics choose the one closest to the initial condition. Expression \eqref{eq:y_inf_proj} decomposes the limit into two parts: the projection of the initial free opinion onto the cohomology $H^0(G; \mathcal{Q})$, which is preserved by the dynamics, plus the minimum-norm particular solution $-L_{\mathcal{Q}}^\dagger L_{QS} u$ that accounts for the stubborn forcing. Expression \eqref{eq:y_inf_correction} shows the limit as the initial condition minus the minimal correction needed to satisfy the constraint; agents adjust their flexible opinions only as much as necessary to accommodate the stubborn neighbors, a principle of minimal compromise.

The equilibrium depends on the initial condition only through its projection onto $H^0(G; \mathcal{Q})$. When this cohomology vanishes, the affine space $\mathcal{A}$ is a single point and the limit is unique. Otherwise, initial conditions differing by a global section of $\mathcal{Q}$ converge to distinct equilibria. The forced equilibrium structure connects to the theory of affine network sheaves developed by Gould~\cite{gould2024hilbert}, where inhomogeneous linear systems arise from affine restriction maps and solutions are characterized as projections onto affine cosets.

\begin{remark}[Relation to classical stubborn agents]
When stubborn agents have their entire stalks fixed ($S_v = \mathcal{F}(v)$ for $v \in U$ and $S_v = \{0\}$ otherwise), the sheaf of free opinions $\mathcal{Q}$ has trivial stalks at stubborn vertices and full stalks elsewhere. In this case, the Poisson equation reduces to the harmonic extension problem studied by Hansen and Ghrist~\cite{hansen2020}: find the opinion configuration on free agents that minimizes total disagreement given the boundary values imposed by stubborn agents. The cohomology $H^0(G; \mathcal{Q})$ coincides with the relative cohomology $H^0(G, U; \mathcal{F})$.
\end{remark}


\begin{figure}[ht]
\centering
\begin{tikzpicture}[
    vertex/.style={circle, fill=white, draw=black, thick, minimum size=5pt, inner sep=0pt},
    edge stalk h/.style={rectangle, rounded corners=3pt, fill=white, draw=black, thick, minimum width=16pt, minimum height=4pt},
    edge stalk v/.style={rectangle, rounded corners=3pt, fill=white, draw=black, thick, minimum width=4pt, minimum height=16pt},
    arrow/.style={-{Stealth[length=4pt]}, thick},
    every node/.style={font=\small},
    scale=0.95
]


\draw[thick] (0,0) -- (4.0,0) -- (4.0,3) -- (0,3) -- cycle;

\node[edge stalk h] (e_top_L) at (2.0,3) {};
\node[edge stalk h] (e_bot_L) at (2.0,0) {};
\node[edge stalk v] (e_left_L) at (0,1.5) {};
\node[edge stalk v] (e_right_L) at (4.0,1.5) {};

\node[vertex] (v1_L) at (0,3) {};
\node[vertex] (v2_L) at (4.0,3) {};
\node[vertex] (v3_L) at (4.0,0) {};
\node[vertex] (v4_L) at (0,0) {};

\node at (0.35,2.65) {$v_1$};
\node at (3.65,2.65) {$v_2$};
\node at (3.65,0.35) {$v_3$};
\node at (0.35,0.35) {$v_4$};

\node[above left] at (v1_L) {$\mathbb{R}$};
\node[above right] at (v2_L) {$\mathbb{R}^2$};
\node[below right] at (v3_L) {$\mathbb{R}$};
\node[below left] at (v4_L) {${\color{red}\mathbb{R}} \oplus \mathbb{R}$};

\node[anchor=base east, font=\footnotesize, inner sep=0pt] at (-0.15,4.2) {$x_{v_1}$};
\node[anchor=base, font=\footnotesize, inner sep=0pt] at (0,4.2) {$=$};
\node[anchor=base west, font=\footnotesize, inner sep=0pt] at (0.15,4.2) {$2$};
\node[anchor=base east, font=\footnotesize, inner sep=0pt] at (3.85,4.2) {$x_{v_2}$};
\node[anchor=base, font=\footnotesize, inner sep=0pt] at (4.0,4.2) {$=$};
\node[anchor=base west, font=\footnotesize, inner sep=0pt] at (4.15,4.2) {$\begin{pmatrix} 1 \\ -2 \end{pmatrix}$};
\node[anchor=base east, font=\footnotesize, inner sep=0pt] at (-0.15,-1.5) {$x_{v_4}$};
\node[anchor=base, font=\footnotesize, inner sep=0pt] at (0,-1.5) {$=$};
\node[anchor=base west, font=\footnotesize, inner sep=0pt] at (0.15,-1.5) {$\begin{pmatrix} {\color{red}1} \\ 0 \end{pmatrix}$};
\node[anchor=base east, font=\footnotesize, inner sep=0pt] at (3.85,-1.5) {$x_{v_3}$};
\node[anchor=base, font=\footnotesize, inner sep=0pt] at (4.0,-1.5) {$=$};
\node[anchor=base west, font=\footnotesize, inner sep=0pt] at (4.15,-1.5) {$0$};

\node[above=3pt] at (e_top_L) {$\mathbb{R}$};
\node[below=3pt] at (e_bot_L) {$\mathbb{R}$};
\node[left=3pt] at (e_left_L) {$\mathbb{R}^2$};
\node[right=3pt] at (e_right_L) {$\mathbb{R}$};

\draw[arrow] (0.15,3.3) -- (1.3,3.3);
\node[above, font=\scriptsize] at (0.7,3.3) {$[-2]$};

\draw[arrow] (-0.3,2.85) -- (-0.3,1.95);
\node[left, font=\scriptsize] at (-0.3,2.4) {$\begin{bmatrix} 1 \\ 0 \end{bmatrix}$};

\draw[arrow] (3.85,3.3) -- (2.7,3.3);
\node[above, font=\scriptsize] at (3.3,3.3) {$\begin{bmatrix} -1 & 2 \end{bmatrix}$};

\draw[arrow] (4.3,2.85) -- (4.3,1.95);
\node[right, font=\scriptsize] at (4.3,2.4) {$\begin{bmatrix} 1 & 1 \end{bmatrix}$};

\draw[arrow] (4.3,0.15) -- (4.3,1.05);
\node[right, font=\scriptsize] at (4.3,0.6) {$[1]$};

\draw[arrow] (3.85,-0.3) -- (2.7,-0.3);
\node[below, font=\scriptsize] at (3.3,-0.3) {$[-1]$};

\draw[arrow] (0.15,-0.3) -- (1.3,-0.3);
\node[below, font=\scriptsize] at (0.7,-0.3) {$\begin{bmatrix} 1 & -1 \end{bmatrix}$};

\draw[arrow] (-0.3,0.15) -- (-0.3,1.05);
\node[left, font=\scriptsize] at (-0.3,0.6) {$\begin{bmatrix} 1 & -1 \\ 1 & 0 \end{bmatrix}$};

\node[below=72pt] at (2.0,0) {\textbf{Initial state} $x(0)$};

\begin{scope}[xshift=10.0cm]

\draw[thick] (0,0) -- (4.0,0) -- (4.0,3) -- (0,3) -- cycle;

\node[edge stalk h] (e_top_R) at (2.0,3) {};
\node[edge stalk h] (e_bot_R) at (2.0,0) {};
\node[edge stalk v] (e_left_R) at (0,1.5) {};
\node[edge stalk v] (e_right_R) at (4.0,1.5) {};

\node[vertex] (v1_R) at (0,3) {};
\node[vertex] (v2_R) at (4.0,3) {};
\node[vertex] (v3_R) at (4.0,0) {};
\node[vertex] (v4_R) at (0,0) {};

\node at (0.35,2.65) {$v_1$};
\node at (3.65,2.65) {$v_2$};
\node at (3.65,0.35) {$v_3$};
\node at (0.35,0.35) {$v_4$};

\node[above left] at (v1_R) {$\mathbb{R}$};
\node[above right] at (v2_R) {$\mathbb{R}^2$};
\node[below right] at (v3_R) {$\mathbb{R}$};
\node[below left] at (v4_R) {${\color{red}\mathbb{R}} \oplus \mathbb{R}$};

\node[anchor=base east, font=\footnotesize, inner sep=0pt] at (-0.15,4.2) {$x^\infty_{v_1}$};
\node[anchor=base, font=\footnotesize, inner sep=0pt] at (0,4.2) {$=$};
\node[anchor=base west, font=\footnotesize, inner sep=0pt] at (0.15,4.2) {$\frac{5}{4}$};
\node[anchor=base east, font=\footnotesize, inner sep=0pt] at (3.85,4.2) {$x^\infty_{v_2}$};
\node[anchor=base, font=\footnotesize, inner sep=0pt] at (4.0,4.2) {$=$};
\node[anchor=base west, font=\footnotesize, inner sep=0pt] at (4.15,4.2) {$\begin{pmatrix} 0 \\ -\frac{5}{4} \end{pmatrix}$};
\node[anchor=base east, font=\footnotesize, inner sep=0pt] at (-0.15,-1.5) {$x^\infty_{v_4}$};
\node[anchor=base, font=\footnotesize, inner sep=0pt] at (0,-1.5) {$=$};
\node[anchor=base west, font=\footnotesize, inner sep=0pt] at (0.15,-1.5) {$\begin{pmatrix} {\color{red}1} \\ -\frac{1}{4} \end{pmatrix}$};
\node[anchor=base east, font=\footnotesize, inner sep=0pt] at (3.85,-1.5) {$x^\infty_{v_3}$};
\node[anchor=base, font=\footnotesize, inner sep=0pt] at (4.0,-1.5) {$=$};
\node[anchor=base west, font=\footnotesize, inner sep=0pt] at (4.15,-1.5) {$-\frac{5}{4}$};

\node[above=3pt] at (e_top_R) {$\mathbb{R}$};
\node[below=3pt] at (e_bot_R) {$\mathbb{R}$};
\node[left=3pt] at (e_left_R) {$\mathbb{R}^2$};
\node[right=3pt] at (e_right_R) {$\mathbb{R}$};

\draw[arrow] (0.15,3.3) -- (1.3,3.3);
\node[above, font=\scriptsize] at (0.7,3.3) {$[-2]$};

\draw[arrow] (-0.3,2.85) -- (-0.3,1.95);
\node[left, font=\scriptsize] at (-0.3,2.4) {$\begin{bmatrix} 1 \\ 0 \end{bmatrix}$};

\draw[arrow] (3.85,3.3) -- (2.7,3.3);
\node[above, font=\scriptsize] at (3.3,3.3) {$\begin{bmatrix} -1 & 2 \end{bmatrix}$};

\draw[arrow] (4.3,2.85) -- (4.3,1.95);
\node[right, font=\scriptsize] at (4.3,2.4) {$\begin{bmatrix} 1 & 1 \end{bmatrix}$};

\draw[arrow] (4.3,0.15) -- (4.3,1.05);
\node[right, font=\scriptsize] at (4.3,0.6) {$[1]$};

\draw[arrow] (3.85,-0.3) -- (2.7,-0.3);
\node[below, font=\scriptsize] at (3.3,-0.3) {$[-1]$};

\draw[arrow] (0.15,-0.3) -- (1.3,-0.3);
\node[below, font=\scriptsize] at (0.7,-0.3) {$\begin{bmatrix} 1 & -1 \end{bmatrix}$};

\draw[arrow] (-0.3,0.15) -- (-0.3,1.05);
\node[left, font=\scriptsize] at (-0.3,0.6) {$\begin{bmatrix} 1 & -1 \\ 1 & 0 \end{bmatrix}$};

\node[below=72pt] at (2.0,0) {\textbf{Equilibrium} $x^\infty$};

\end{scope}

\draw[-{Stealth[length=6pt]}, very thick] (5.0,1.5) -- (9.0,1.5);
\node[above, font=\footnotesize] at (7.0,1.5) {$\frac{dy}{dt} = -\alpha(L_{\mathcal{Q}} y + L_{QS} u)$};

\end{tikzpicture}
\caption{Constrained diffusion with a partially stubborn agent. Agent $v_4$ holds a fixed opinion in the first coordinate (red), while all other opinions evolve freely. \textbf{Left:} Initial state with $\|\delta x(0)\|^2 = 5$. \textbf{Right:} Equilibrium with $\|\delta x^\infty\|^2 = 1$. The stubborn direction at $v_4$ remains unchanged at $1$. Edges $e_{12}$, $e_{23}$, and $e_{34}$ achieve perfect agreement, but edge $e_{41}$ retains residual discrepancy $(0, -1)^T$ because $v_4$'s stubbornness prevents full consensus.}
\label{fig:stubborn-diffusion-example}
\end{figure}


\section{Stubborn Expressions}\label{sec:stubborn-expressions}

In the previous section, the discourse structure remained fixed while agents adapted their private opinions. Hansen and Ghrist \cite{hansen2020} introduced a dual perspective in their ``learning to lie'' dynamics: agents maintain fixed opinions but evolve the restriction maps that govern how those opinions are expressed. Under this dynamics, the coboundary operator $\delta$ evolves to minimize the discrepancy $\|\delta x\|^2$, and the system converges to the Frobenius nearest sheaf under which the fixed opinion distribution becomes a global section.

We now introduce a partial version of this structural adaptation. Rather than allowing all restriction maps to evolve, we designate a subset of incidence pairs whose restriction maps may adapt, while the remaining maps stay fixed. This models situations where communication norms vary across relationships: an agent may adapt how they express opinions to certain neighbors while following fixed patterns with others, due to formal hierarchies, longstanding conventions, or differing levels of social flexibility. Adaptation may be asymmetric: on a given edge, one endpoint may adjust its expression policy while the other maintains a rigid stance. We call this \emph{partial learning}, and the fixed restriction maps represent \emph{stubborn expressions}~\cite{kuran1997,ye2019expressedprivate}.

\subsection{Partial Learning Dynamics}\label{ssec:partial-learning}

To formalize partial learning, let $\mathcal{I} \subseteq \{(v,e) : v \unlhd e\}$ denote the set of adapting incidences; all other restriction maps remain fixed. Let $V_{\mathrm{maps}} = \bigoplus_{(v,e) \in \mathcal{I}} \mathrm{Hom}(\mathcal{F}(v), \mathcal{F}(e))$ be the space of adapting maps, equipped with the Frobenius inner product, and let $\rho \in V_{\mathrm{maps}}$ denote a configuration of adapting maps.

Fix a $0$-cochain $x \in C^0(G; \mathcal{F})$ representing the agents' opinions. Let $E' \subseteq E$ be the edges with at least one adapting incidence, and let $W = \bigoplus_{e \in E'} \mathcal{F}(e)$. Choosing an orientation for each edge, define the linear operator $A: V_{\mathrm{maps}} \to W$ by
\begin{align*}
(A\rho)_e = \sum_{(w,e) \in \mathcal{I}} \sigma_{w,e} \, \rho_{w \unlhd e}(x_w),
\end{align*}
where $\sigma_{w,e} = \pm 1$ is the incidence sign. Define the constant $c \in W$ encoding the contribution of fixed maps:
\begin{align*}
c_e = \sum_{(w,e) \notin \mathcal{I}} \sigma_{w,e} \, \mathcal{F}_{w \unlhd e}(x_w).
\end{align*}
The total discrepancy on adapting edges is $d(\rho) = A\rho + c$, which is affine in $\rho$.

\begin{theorem}[Partial learning: convergence to the nearest consistent sheaf]
\label{thm:partial-learning-unreg}
Let $\mathcal{F}$ be a discourse sheaf on a finite graph $G = (V,E)$ with finite-dimensional stalks. For $\beta > 0$, the gradient flow on the discrepancy energy $\mathcal{J}(\rho) = \frac{1}{2}\|A\rho + c\|^2$,
\begin{align}\label{eq:structure-dynamics}
\frac{d\rho}{dt} = -\beta A^T(A\rho(t) + c), \qquad \rho(0) = \rho_0,
\end{align}
admits a unique global solution and converges exponentially to
\begin{align}\label{eq:rho_inf}
\rho^\infty = \rho_0 - A^\dagger d_0,
\end{align}
where $d_0 = A\rho_0 + c$ is the initial discrepancy. The limiting restriction maps $\rho^\infty$, together with the fixed maps, define a sheaf $\mathcal{F}^\infty$ that is the Frobenius-nearest to $\mathcal{F}$ among all sheaves minimizing discrepancy on $E'$. If the system is consistent ($-c \in \mathrm{im}(A)$), then $\mathcal{F}^\infty$ achieves zero discrepancy on all adapting edges; otherwise, it minimizes the residual $\|A\rho + c\|$.
\end{theorem}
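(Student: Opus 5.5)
The flow \eqref{eq:structure-dynamics} is a linear inhomogeneous ODE with constant coefficients on the finite-dimensional space $V_{\mathrm{maps}}$. The right-hand side is affine, hence globally Lipschitz, so existence and uniqueness of a global solution are immediate. The plan is to mirror the proof of Theorem~\ref{thm:stubborn-directions}, with $A^TA$ playing the role of $L_{\mathcal{Q}}$ and $A^Tc$ the role of the forcing $L_{QS}u$. The solvability condition is automatic here, since $\operatorname{im}(A^T c) \subseteq \operatorname{im}(A^T) = \operatorname{im}(A^TA)$.

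\textbf{Step 1: explicit solution and its limit.} Write $M = A^TA \succeq 0$ and split $c = c_\parallel + c_\perp$ with $c_\parallel = AA^\dagger c \in \operatorname{im}(A)$ and $c_\perp \in \ker(A^T)$. Then $A^T c = A^T c_\parallel$, so the flow only sees $c_\parallel$. The explicit solution is
\begin{align*}
\rho(t) = e^{-t\beta M}\rho_0 - M^\dagger\bigl(I - e^{-t\beta M}\bigr)A^T c .
\end{align*}
Diagonalize $M$. The component of $\rho(t)$ in $\ker(M) = \ker(A)$ is conserved. The component in $\operatorname{im}(M)$ converges at rate $e^{-\beta\lambda_{\min}^+ t}$, where $\lambda_{\min}^+$ is the smallest positive eigenvalue of $M$; this gives exponential convergence. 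The limit is
\[
\rho^\infty = P_{\ker A}\rho_0 - M^\dagger A^T c .
\]
To match \eqref{eq:rho_inf}, use $P_{\ker A} = I - A^\dagger A$ and $M^\dagger A^T = A^\dagger$. This gives $\rho^\infty = \rho_0 - A^\dagger A\rho_0 - A^\dagger c = \rho_0 - A^\dagger d_0$.

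\textbf{Step 2: variational characterization.} Compute $A\rho^\infty + c = (I - AA^\dagger)(A\rho_0 + c) = c_\perp$. This residual is orthogonal to $\operatorname{im}(A)$, so $\rho^\infty$ satisfies the normal equations and minimizes $\|A\rho + c\|$. The minimizing set is the affine subspace $\rho^\infty + \ker(A)$.

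Next, $\rho^\infty - \rho_0 = -A^\dagger d_0$ lies in $\operatorname{im}(A^\dagger) = \operatorname{im}(A^T) = \ker(A)^\perp$. Hence $\rho^\infty$ is the orthogonal projection of $\rho_0$ onto this minimizing subspace, i.e.\ the Frobenius-nearest minimizer.

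The fixed maps are untouched by the flow. Frobenius distance between the full sheaves $\mathcal{F}^\infty$ and $\mathcal{F}$ therefore reduces to $\|\rho - \rho_0\|$ on $V_{\mathrm{maps}}$, which yields the nearest-sheaf statement. Here I interpret ``sheaves minimizing discrepancy on $E'$'' as those sharing the fixed maps.

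\textbf{Step 3: consistency dichotomy and the main subtlety.} If $-c \in \operatorname{im}(A)$ then $c_\perp = 0$, and the residual on every adapting edge vanishes. Otherwise the residual $c_\perp \neq 0$ is the minimal one.

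The main point requiring care is the pseudoinverse bookkeeping: the identities $M^\dagger A^T = A^\dagger$ and $\operatorname{im}A^\dagger = \ker(A)^\perp$. Equally important is noting that the conserved quantity $P_{\ker A}\rho$ is what selects the nearest sheaf. I would verify these identities via the SVD of $A$, which also makes the exponential rate explicit as $\beta\sigma_{\min}^2$, where $\sigma_{\min}$ is the smallest nonzero singular value of $A$.
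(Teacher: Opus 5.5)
Your proposal is correct and follows essentially the same route as the paper: split $V_{\mathrm{maps}} = \ker(A) \oplus \ker(A)^\perp$, observe that the $\ker(A)$-component is conserved, obtain exponential convergence on the complement to $-A^\dagger c$, and recombine using $P_{\ker(A)} = I - A^\dagger A$. Your explicit matrix-exponential formula and your Step~2 check that $\rho^\infty - \rho_0 = -A^\dagger d_0 \in \ker(A)^\perp$, with residual $(I - AA^\dagger)c$, make explicit the Frobenius-nearest and least-squares claims that the paper's proof only asserts.
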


The proof proceeds by decomposing $V_{\mathrm{maps}}$ into $\ker(A)$ and its orthogonal complement. The component in $\ker(A)$ is preserved (variations invisible to discrepancy remain unchanged), while the component in $\ker(A)^\perp$ converges exponentially to the minimum-norm least-squares solution.

\begin{proof}
The dynamics $d{\rho}/dt = -\beta A^T A \rho - \beta A^T c$ form a linear system governed by the positive semidefinite matrix $M = A^T A$. Decompose $V_{\mathrm{maps}} = \ker(A) \oplus \ker(A)^\perp$. Since $\mathrm{im}(A^T) = \ker(A)^\perp$, the projection onto $\ker(A)$ satisfies $P_{\ker(A)} A^T = 0$, so
\begin{align*}
\frac{d}{dt} P_{\ker(A)} \rho = -\beta P_{\ker(A)} A^T(A\rho + c) = 0.
\end{align*}
Thus $P_{\ker(A)} \rho(t) = P_{\ker(A)} \rho_0$ for all $t \geq 0$.

On $\ker(A)^\perp$, the operator $M$ has strictly positive eigenvalues, so the dynamics converge exponentially to the unique solution of $A^T A \rho = -A^T c$ in this subspace. This solution is $-A^\dagger c$, the minimum-norm least-squares solution. Combining the two components yields
\begin{align*}
\rho^\infty = P_{\ker(A)}(\rho_0) - A^\dagger c = \rho_0 - A^\dagger(A\rho_0 + c) = \rho_0 - A^\dagger d_0,
\end{align*}
using $P_{\ker(A)} = I - A^\dagger A$. If $-c \in \mathrm{im}(A)$, then $A\rho^\infty + c = 0$.
\end{proof}


The limit $\rho^\infty$ admits equivalent characterizations that illuminate different aspects of the dynamics. Writing $\mathcal{A} = \{\rho : A^T(A\rho + c) = 0\}$ for the affine subspace of discrepancy minimizers,
\begin{align}
\rho^\infty &= \operatorname*{argmin}_{\rho \in \mathcal{A}} \|\rho - \rho_0\|, \label{eq:rho_inf_argmin} \\
&= P_{\ker(A)}(\rho_0) - A^\dagger c. \label{eq:rho_inf_proj}
\end{align}
Expression \eqref{eq:rho_inf_argmin} says that among all restriction maps achieving minimal discrepancy, the dynamics select the one closest to the initial configuration in the Frobenius norm. Expression \eqref{eq:rho_inf_proj} decomposes the limit into two parts: the projection of the initial maps onto $\ker(A)$, which is preserved by the dynamics, plus the minimum-norm particular solution $-A^\dagger c$ that accounts for the fixed maps. The equilibrium depends on the initial condition only through its projection onto $\ker(A)$; when $\ker(A) = \{0\}$, the limit is unique.

The form $\rho^\infty = \rho_0 - A^\dagger d_0$ from the theorem reveals the cost of accommodation: the term $A^\dagger d_0$ measures the minimal deviation from initial expression policies required to achieve harmony (or minimize disharmony) with the stubborn expressions. In asymmetric scenarios where one endpoint adapts while the other remains fixed, this cost falls entirely on the adapting agent, who must learn to translate their opinions into a language compatible with their stubborn neighbor.

\begin{remark}[Relation to classical learning to lie]
When all incidences adapt ($\mathcal{I} = \{(v,e) : v \unlhd e\}$), there are no fixed restriction maps and $c = 0$. The dynamics reduce to the learning to lie framework of Hansen and Ghrist \cite{hansen2020}, where the full coboundary operator evolves to minimize $\|\delta x\|^2$ and the limit $\rho^\infty = P_{\ker(A)}(\rho_0)$ is the Frobenius-nearest sheaf under which $x$ becomes a global section. Our framework generalizes this by allowing a designated subset of restriction maps to remain fixed.
\end{remark}


\begin{figure}[ht]
\centering
\begin{tikzpicture}[
    vertex/.style={circle, fill=white, draw=black, thick, minimum size=5pt, inner sep=0pt},
    edge stalk h/.style={rectangle, rounded corners=3pt, fill=white, draw=black, thick, minimum width=16pt, minimum height=4pt},
    edge stalk v/.style={rectangle, rounded corners=3pt, fill=white, draw=black, thick, minimum width=4pt, minimum height=16pt},
    arrow/.style={-{Stealth[length=4pt]}, thick},
    every node/.style={font=\small},
    scale=0.95
]


\draw[thick] (0,0) -- (4.0,0) -- (4.0,3) -- (0,3) -- cycle;

\node[edge stalk h] (e_top_L) at (2.0,3) {};
\node[edge stalk h] (e_bot_L) at (2.0,0) {};
\node[edge stalk v] (e_left_L) at (0,1.5) {};
\node[edge stalk v] (e_right_L) at (4.0,1.5) {};

\node[vertex] (v1_L) at (0,3) {};
\node[vertex] (v2_L) at (4.0,3) {};
\node[vertex] (v3_L) at (4.0,0) {};
\node[vertex] (v4_L) at (0,0) {};

\node at (0.35,2.65) {$v_1$};
\node at (3.65,2.65) {$v_2$};
\node at (3.65,0.35) {$v_3$};
\node at (0.35,0.35) {$v_4$};

\node[above left] at (v1_L) {$\mathbb{R}$};
\node[above right] at (v2_L) {$\mathbb{R}^2$};
\node[below right] at (v3_L) {$\mathbb{R}$};
\node[below left] at (v4_L) {$\mathbb{R}^2$};

\node[anchor=base east, font=\footnotesize, inner sep=0pt] at (-0.15,4.2) {$x_{v_1}$};
\node[anchor=base, font=\footnotesize, inner sep=0pt] at (0,4.2) {$=$};
\node[anchor=base west, font=\footnotesize, inner sep=0pt] at (0.15,4.2) {$2$};
\node[anchor=base east, font=\footnotesize, inner sep=0pt] at (3.85,4.2) {$x_{v_2}$};
\node[anchor=base, font=\footnotesize, inner sep=0pt] at (4.0,4.2) {$=$};
\node[anchor=base west, font=\footnotesize, inner sep=0pt] at (4.15,4.2) {$\begin{pmatrix} 1 \\ -2 \end{pmatrix}$};
\node[anchor=base east, font=\footnotesize, inner sep=0pt] at (-0.15,-1.5) {$x_{v_4}$};
\node[anchor=base, font=\footnotesize, inner sep=0pt] at (0,-1.5) {$=$};
\node[anchor=base west, font=\footnotesize, inner sep=0pt] at (0.15,-1.5) {$\begin{pmatrix} 1 \\ 0 \end{pmatrix}$};
\node[anchor=base east, font=\footnotesize, inner sep=0pt] at (3.85,-1.5) {$x_{v_3}$};
\node[anchor=base, font=\footnotesize, inner sep=0pt] at (4.0,-1.5) {$=$};
\node[anchor=base west, font=\footnotesize, inner sep=0pt] at (4.15,-1.5) {$0$};

\node[above=3pt] at (e_top_L) {$\mathbb{R}$};
\node[below=3pt] at (e_bot_L) {$\mathbb{R}$};
\node[left=3pt] at (e_left_L) {$\mathbb{R}^2$};
\node[right=3pt] at (e_right_L) {$\mathbb{R}$};

\draw[arrow] (0.15,3.3) -- (1.3,3.3);
\node[above, font=\scriptsize] at (0.7,3.3) {$[-2]$};

\draw[arrow] (-0.3,2.85) -- (-0.3,1.95);
\node[left, font=\scriptsize] at (-0.3,2.4) {$\begin{bmatrix} 1 \\ 0 \end{bmatrix}$};

\draw[arrow] (3.85,3.3) -- (2.7,3.3);
\node[above, font=\scriptsize] at (3.3,3.3) {$\begin{bmatrix} -1 & 2 \end{bmatrix}$};

\draw[arrow] (4.3,2.85) -- (4.3,1.95);
\node[right, font=\scriptsize] at (4.3,2.4) {$\begin{bmatrix} 1 & 1 \end{bmatrix}$};

\draw[arrow] (4.3,0.15) -- (4.3,1.05);
\node[right, font=\scriptsize] at (4.3,0.6) {$[1]$};

\draw[arrow] (3.85,-0.3) -- (2.7,-0.3);
\node[below, font=\scriptsize] at (3.3,-0.3) {$[-1]$};

\draw[arrow, red] (0.15,-0.3) -- (1.3,-0.3);
\node[below, font=\scriptsize, red] at (0.7,-0.3) {$\begin{bmatrix} 1 & -1 \end{bmatrix}$};

\draw[arrow, red] (-0.3,0.15) -- (-0.3,1.05);
\node[left, font=\scriptsize, red] at (-0.3,0.6) {$\begin{bmatrix} 1 & -1 \\ 1 & 0 \end{bmatrix}$};

\node[below=72pt] at (2.0,0) {\textbf{Initial state} $\rho(0)$};

\begin{scope}[xshift=10.0cm]

\draw[thick] (0,0) -- (4.0,0) -- (4.0,3) -- (0,3) -- cycle;

\node[edge stalk h] (e_top_R) at (2.0,3) {};
\node[edge stalk h] (e_bot_R) at (2.0,0) {};
\node[edge stalk v] (e_left_R) at (0,1.5) {};
\node[edge stalk v] (e_right_R) at (4.0,1.5) {};

\node[vertex] (v1_R) at (0,3) {};
\node[vertex] (v2_R) at (4.0,3) {};
\node[vertex] (v3_R) at (4.0,0) {};
\node[vertex] (v4_R) at (0,0) {};

\node at (0.35,2.65) {$v_1$};
\node at (3.65,2.65) {$v_2$};
\node at (3.65,0.35) {$v_3$};
\node at (0.35,0.35) {$v_4$};

\node[above left] at (v1_R) {$\mathbb{R}$};
\node[above right] at (v2_R) {$\mathbb{R}^2$};
\node[below right] at (v3_R) {$\mathbb{R}$};
\node[below left] at (v4_R) {$\mathbb{R}^2$};

\node[anchor=base east, font=\footnotesize, inner sep=0pt] at (-0.15,4.2) {$x_{v_1}$};
\node[anchor=base, font=\footnotesize, inner sep=0pt] at (0,4.2) {$=$};
\node[anchor=base west, font=\footnotesize, inner sep=0pt] at (0.15,4.2) {$2$};
\node[anchor=base east, font=\footnotesize, inner sep=0pt] at (3.85,4.2) {$x_{v_2}$};
\node[anchor=base, font=\footnotesize, inner sep=0pt] at (4.0,4.2) {$=$};
\node[anchor=base west, font=\footnotesize, inner sep=0pt] at (4.15,4.2) {$\begin{pmatrix} 1 \\ -2 \end{pmatrix}$};
\node[anchor=base east, font=\footnotesize, inner sep=0pt] at (-0.15,-1.5) {$x_{v_4}$};
\node[anchor=base, font=\footnotesize, inner sep=0pt] at (0,-1.5) {$=$};
\node[anchor=base west, font=\footnotesize, inner sep=0pt] at (0.15,-1.5) {$\begin{pmatrix} 1 \\ 0 \end{pmatrix}$};
\node[anchor=base east, font=\footnotesize, inner sep=0pt] at (3.85,-1.5) {$x_{v_3}$};
\node[anchor=base, font=\footnotesize, inner sep=0pt] at (4.0,-1.5) {$=$};
\node[anchor=base west, font=\footnotesize, inner sep=0pt] at (4.15,-1.5) {$0$};

\node[above=3pt] at (e_top_R) {$\mathbb{R}$};
\node[below=3pt] at (e_bot_R) {$\mathbb{R}$};
\node[left=3pt] at (e_left_R) {$\mathbb{R}^2$};
\node[right=3pt] at (e_right_R) {$\mathbb{R}$};

\draw[arrow] (0.15,3.3) -- (1.3,3.3);
\node[above, font=\scriptsize] at (0.7,3.3) {$[-\frac{20}{9}]$};

\draw[arrow] (-0.3,2.85) -- (-0.3,1.95);
\node[left, font=\scriptsize] at (-0.3,2.4) {$\begin{bmatrix} \frac{1}{2} \\ \frac{1}{2} \end{bmatrix}$};

\draw[arrow] (3.85,3.3) -- (2.7,3.3);
\node[above, font=\scriptsize] at (3.3,3.3) {$\begin{bmatrix} -\frac{8}{9} & \frac{16}{9} \end{bmatrix}$};

\draw[arrow] (4.3,2.85) -- (4.3,1.95);
\node[right, font=\scriptsize] at (4.3,2.4) {$\begin{bmatrix} \frac{6}{5} & \frac{3}{5} \end{bmatrix}$};

\draw[arrow] (4.3,0.15) -- (4.3,1.05);
\node[right, font=\scriptsize] at (4.3,0.6) {$[1]$};

\draw[arrow] (3.85,-0.3) -- (2.7,-0.3);
\node[below, font=\scriptsize] at (3.3,-0.3) {$[-1]$};

\draw[arrow, red] (0.15,-0.3) -- (1.3,-0.3);
\node[below, font=\scriptsize, red] at (0.7,-0.3) {$\begin{bmatrix} 1 & -1 \end{bmatrix}$};

\draw[arrow, red] (-0.3,0.15) -- (-0.3,1.05);
\node[left, font=\scriptsize, red] at (-0.3,0.6) {$\begin{bmatrix} 1 & -1 \\ 1 & 0 \end{bmatrix}$};

\node[below=72pt] at (2.0,0) {\textbf{Equilibrium} $\rho^\infty$};

\end{scope}

\draw[-{Stealth[length=6pt]}, very thick] (5.0,1.5) -- (9.0,1.5);
\node[above, font=\footnotesize] at (7.0,1.5) {$\frac{d\rho}{dt} = -\beta A^T(A\rho + c)$};

\end{tikzpicture}
\caption{Partial learning with stubborn expressions. Opinions $x$ are fixed throughout; restriction maps from $v_4$ (red) are frozen while the remaining maps (black) adapt. \textbf{Left:} Initial configuration with $\|\delta x\|^2 = 5$. \textbf{Right:} Equilibrium with $\|\delta x\|^2 = 1$. Agents $v_1$, $v_2$, $v_3$ adjust their expression policies to accommodate $v_4$'s rigid communication. Edge $e_{34}$ retains residual discrepancy $1$ because $x_{v_3} = 0$ makes $v_3$'s expressed opinion vanish regardless of its restriction map, while $v_4$'s fixed map expresses $1$.}
\label{fig:stubborn-expressions-example}
\end{figure}
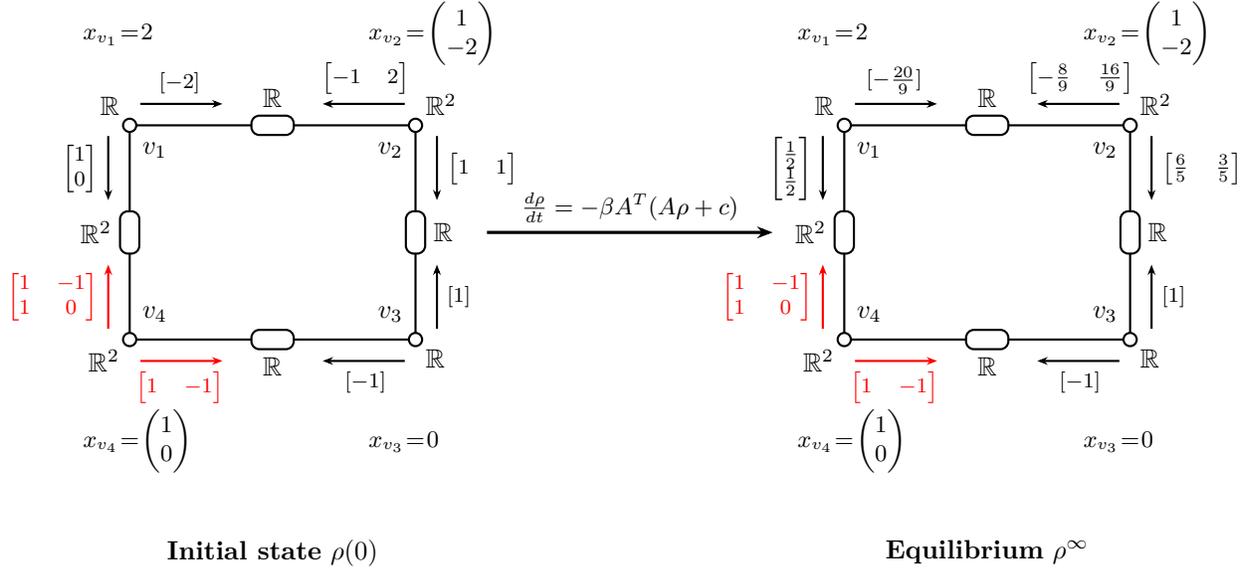


In practice, solving the dynamics requires working with the matrix $A^T A$, which may be ill-conditioned or singular. This occurs when the edge constraints are redundant, or when the fixed opinions $x$ are small, noisy, or collinear: one cannot robustly infer communication rules from degenerate data. Adding a regularization term penalizing deviations from the initial restriction maps ensures a unique equilibrium and stabilizes computation.

\begin{theorem}[Regularized partial learning]
\label{thm:partial-learning-reg}
Fix $\lambda > 0$ and consider the regularized objective
\begin{align}
\mathcal{L}_\lambda(\rho) = \tfrac{1}{2}\|A\rho + c\|^2 + \tfrac{\lambda}{2}\|\rho - \rho_0\|^2.
\end{align}
The gradient flow $d\rho/dt  = -\beta\nabla \mathcal{L}_\lambda(\rho)$ converges exponentially to the unique minimizer $\rho^*$, characterized by
\begin{align}
(A^T A + \lambda I)\rho^* = \lambda \rho_0 - A^T c.
\end{align}
As $\lambda \to 0$, the minimizers $\rho^*(\lambda)$ converge to the unregularized limit $\rho^\infty$ from Theorem~\ref{thm:partial-learning-unreg}.
\end{theorem}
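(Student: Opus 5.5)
The plan is to treat $\mathcal{L}_\lambda$ as a strongly convex quadratic and read off both claims from its explicit minimizer. First I will compute the gradient,
\begin{align*}
\nabla\mathcal{L}_\lambda(\rho) = A^T(A\rho + c) + \lambda(\rho - \rho_0) = (A^TA + \lambda I)\rho - (\lambda\rho_0 - A^Tc).
\end{align*}
The Hessian $H_\lambda = A^TA + \lambda I$ is constant and satisfies $H_\lambda \succeq \lambda I$. Hence $H_\lambda$ is invertible, and the unique critical point $\rho^* = H_\lambda^{-1}(\lambda\rho_0 - A^Tc)$ is the unique global minimizer. This gives the stated characterization.

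For convergence, set $e(t) = \rho(t) - \rho^*$. The flow becomes the linear homogeneous system $de/dt = -\beta H_\lambda e$. Since $H_\lambda$ is symmetric with smallest eigenvalue at least $\lambda$, we get $\|e(t)\| \le e^{-\beta\lambda t}\|e(0)\|$. This is exponential convergence at rate at least $\beta\lambda$, and it needs no kernel bookkeeping, unlike Theorem~\ref{thm:partial-learning-unreg}.

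For the limit $\lambda \to 0$, I will use a singular value decomposition $A = \sum_i \sigma_i w_i v_i^T$ with $\sigma_i > 0$ and orthonormal $v_i$ spanning $\ker(A)^\perp$. I then split $\rho^*(\lambda)$ along $\ker(A) \oplus \ker(A)^\perp$.
\begin{itemize}
\item On $\ker(A)$, $H_\lambda$ acts as $\lambda I$ and $A^Tc$ has no component there. So $P_{\ker(A)}\rho^*(\lambda) = P_{\ker(A)}\rho_0$ for every $\lambda$.
\item On $\ker(A)^\perp$, along each $v_i$ the coefficient of $\rho^*(\lambda)$ is
\begin{align*}
\frac{\lambda}{\sigma_i^2+\lambda}\langle \rho_0, v_i\rangle - \frac{\sigma_i}{\sigma_i^2+\lambda}\langle c, w_i\rangle .
\end{align*}
As $\lambda\to 0$ this tends to $-\sigma_i^{-1}\langle c, w_i\rangle$, which is exactly the $v_i$-coefficient of $-A^\dagger c$.
\end{itemize}
Combining the two components gives $\rho^*(\lambda) \to P_{\ker(A)}(\rho_0) - A^\dagger c$. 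This equals $\rho^\infty$ by \eqref{eq:rho_inf_proj}, and equivalently $\rho_0 - A^\dagger d_0$ via $P_{\ker(A)} = I - A^\dagger A$.

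The main obstacle is this last step, the Tikhonov limit $(A^TA+\lambda I)^{-1}A^T \to A^\dagger$ together with $\lambda(A^TA+\lambda I)^{-1} \to P_{\ker(A)}$. The subtle point is that $(A^TA+\lambda I)^{-1}$ itself blows up on $\ker(A)$. The blow-up must cancel against the factor $\lambda$ in front of $\rho_0$, and $A^Tc$ must have no $\ker(A)$ component. The diagonalization above makes this explicit and shows the convergence is uniform. No assumption of consistency $-c\in\mathrm{im}(A)$ is needed, since $A^\dagger c$ is the least-squares solution in either case.
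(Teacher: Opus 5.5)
Your proposal is correct and follows the paper's route. Both compute the gradient, use $A^TA+\lambda I \succeq \lambda I$ to get a unique equilibrium with exponential convergence of the linear flow, and then pass to the Tikhonov limit; the only difference is that the paper cites standard Tikhonov theory for the $\lambda \to 0$ step, while your SVD computation proves it explicitly (including $P_{\ker(A)}\rho^*(\lambda) = P_{\ker(A)}\rho_0$ for all $\lambda$) and correctly identifies the limit as $P_{\ker(A)}(\rho_0) - A^\dagger c = \rho^\infty$.
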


\begin{proof}
The gradient is $\nabla \mathcal{L}_\lambda(\rho) = A^T(A\rho + c) + \lambda(\rho - \rho_0)$, so the dynamics become $d\rho/dt  = -\beta(A^T A + \lambda I)\rho + \beta(\lambda \rho_0 - A^T c)$. Since $A^T A + \lambda I$ is strictly positive definite, this linear system converges exponentially to its unique equilibrium. As $\lambda \to 0$, standard Tikhonov regularization theory gives convergence to the minimum-norm least-squares solution closest to $\rho_0$.
\end{proof}

The regularization term $\frac{\lambda}{2}\|\rho - \rho_0\|^2$ models a \emph{cost of adaptation}: agents prefer to maintain established expression patterns unless disagreement pressure justifies the effort of change. Large $\lambda$ means expressive habits resist change; small $\lambda$ favors harmony over consistency. The equilibrium $\rho^*$ balances pressure for expressive stability against the drive toward social accommodation.

\begin{remark}[Relation to weighted reluctance]
Adding regularization to opinion dynamics (when expressions do not evolve) is a particular case of the weighted reluctance framework of Hansen and Ghrist \cite{hansen2020}, where a term $\frac{\gamma}{2}\|y - y_0\|^2$ penalizes deviation from initial beliefs. The two regularizations capture dual phenomena: reluctance to change one's mind versus reluctance to change one's mode of expression.
\end{remark}

\subsection{The Sheaf of Free Structures}\label{ssec:sheaf-free-structures}

A reader comparing the proofs of Theorem~\ref{thm:stubborn-directions} and Theorem~\ref{thm:partial-learning-unreg} will notice they are structurally identical: both describe gradient descent on a quadratic energy, both converge to solutions of Poisson-type equations, and both characterize equilibria as projections onto affine cosets. This suggests a sheaf-theoretic framework unifying the two settings.

The underlying unity stems from the bilinearity of the discrepancy map. For an edge $e = u \sim v$ and a discourse sheaf $\mathcal{F}$, the local disagreement is
\[
D_e(\mathcal{F}, x) = \mathcal{F}_{v \unlhd e}(x_v) - \mathcal{F}_{u \unlhd e}(x_u).
\]
This expression is linear in $x$ when $\mathcal{F}$ is fixed (Section~\ref{sec:stubborn-opinions}), and linear in the restriction maps of $\mathcal{F}$ when $x$ is fixed (the present section). We now construct an auxiliary sheaf that makes this parallel precise.

\begin{definition}[Sheaf of Structures]
\label{def:sheaf-of-structures}
Let $\mathcal{F}$ be a discourse sheaf on $G$ and let $x \in C^0(G; \mathcal{F})$ be a fixed $0$-cochain. The \emph{sheaf of structures} $\mathcal{H}^x$ on $G$ is defined as follows:
\begin{itemize}
\item The vertex stalk at $v \in V$ is the space of all candidate restriction maps from $v$:
\[
\mathcal{H}^x(v) = \bigoplus_{e : v \unlhd e} \mathrm{Hom}(\mathcal{F}(v), \mathcal{F}(e)).
\]
\item The edge stalk at $e \in E$ is the discourse space: $\mathcal{H}^x(e) = \mathcal{F}(e)$.
\item For each incidence $v \unlhd e$, the restriction map $(\mathcal{H}^x)_{v \unlhd e}: \mathcal{H}^x(v) \to \mathcal{H}^x(e)$ evaluates the $e$-component on the fixed opinion:
\[
(\mathcal{H}^x)_{v \unlhd e}\bigl(\rho\bigr) = \mathrm{ev}_{x_v} \circ \pi_e (\rho) = \rho_{v \unlhd e}(x_v),
\]
where $\rho = (\rho_{v \unlhd e'})_{e' : v \unlhd e'} \in \mathcal{H}^x(v)$ is a tuple of linear maps indexed by the edges incident to $v$, $\pi_e: \mathcal{H}^x(v) \to \mathrm{Hom}(\mathcal{F}(v), \mathcal{F}(e))$ projects onto the $e$-component, and $\mathrm{ev}_{x_v}(\rho) = \rho(x_v)$ evaluates at the fixed opinion.
\end{itemize}
\end{definition}

A $0$-cochain in $\mathcal{H}^x$ assigns a candidate restriction map to every incidence, specifying a complete sheaf structure over the stalks of $\mathcal{F}$. The coboundary measures discrepancy: $(\delta^{\mathcal{H}^x} \rho)_e = 0$ precisely when the maps encoded in $\rho$ make the fixed opinions $x$ publicly consistent on edge $e$. Global sections of $\mathcal{H}^x$ are thus sheaf structures for which $x$ is a global section. The same construction appears in Gould~\cite{gould2024hilbert} under the name \emph{restriction map diffusion} in the context of cellular sheaves of Hilbert spaces, and is implicit in the ``dual diffusion'' formulation of Hernandez Caralt et al.~\cite{hernandezcaralt2024joint}. A detailed discussion of the categorical underpinnings of this construction, including its generality, the precise sense in which $\mathcal{F}$ and $\mathcal{H}^x$ are \textit{dual}, and a canonical embedding relating them, appears in Appendix~\ref{app:categorical-structure}.

Now let $\mathcal{I} \subseteq \{(v, e) : v \unlhd e\}$ be the set of adapting incidences. Each vertex stalk decomposes as
\[
\mathcal{H}^x(v) = \underbrace{\bigoplus_{e : (v,e) \notin \mathcal{I}} \mathrm{Hom}(\mathcal{F}(v), \mathcal{F}(e))}_{S_v^{\mathcal{H}}} \oplus \underbrace{\bigoplus_{e : (v,e) \in \mathcal{I}} \mathrm{Hom}(\mathcal{F}(v), \mathcal{F}(e))}_{T_v^{\mathcal{H}}},
\]
where $S_v^{\mathcal{H}}$ contains the stubborn restriction maps and $T_v^{\mathcal{H}}$ contains the adapting restriction maps, in direct analogy with the decomposition $\mathcal{F}(v) = S_v \oplus T_v$ of Section~\ref{sec:stubborn-opinions}.

\begin{definition}[Sheaf of Free Structures]
\label{def:sheaf-free-structures}
The \emph{sheaf of free structures} $\mathcal{H}^x_{\mathcal{I}}$ is the subsheaf of $\mathcal{H}^x$ with vertex stalks $\mathcal{H}^x_{\mathcal{I}}(v) = T_v^{\mathcal{H}}$. The edge stalks are $\mathcal{H}^x_{\mathcal{I}}(e) = \mathcal{F}(e)$ for $e \in E'$ and $\mathcal{H}^x_{\mathcal{I}}(e) = 0$ for $e \notin E'$, where $E'$ denotes the edges with at least one adapting incidence. The restriction maps are inherited from $\mathcal{H}^x$.
\end{definition}

Note that $\mathcal{H}^x_{\mathcal{I}}$ is a well-defined cellular sheaf on the full graph $G$, not merely on the subgraph induced by $E'$. The trivial edge stalks for $e \notin E'$ ensure that the coboundary operator $\delta^{\mathcal{H}^x_{\mathcal{I}}}$ and Laplacian $L_{\mathcal{H}^x_{\mathcal{I}}}$ are defined on all of $G$, with contributions from edges outside $E'$ vanishing automatically. The subsheaf inclusion $\mathcal{H}^x_{\mathcal{I}} \hookrightarrow \mathcal{H}^x$ induces a corresponding decomposition of the cochain space: $C^0(G; \mathcal{H}^x) = C^0(S^{\mathcal{H}}) \oplus C^0(\mathcal{H}^x_{\mathcal{I}})$, where $C^0(S^{\mathcal{H}}) = \bigoplus_v S_v^{\mathcal{H}}$ and $C^0(\mathcal{H}^x_{\mathcal{I}}) = \bigoplus_v T_v^{\mathcal{H}} = V_{\mathrm{maps}}$.

\begin{lemma}[Cochain structure of $\mathcal{H}^x_{\mathcal{I}}$]
\label{lem:cochain-structure}
The cochain spaces of $\mathcal{H}^x_{\mathcal{I}}$ admit the following descriptions:
\begin{enumerate}
\item The $0$-cochain space is $C^0(G; \mathcal{H}^x_{\mathcal{I}}) = \bigoplus_{v \in V} T_v^{\mathcal{H}} = V_{\mathrm{maps}}$, with inner product
\[
\langle \rho, \rho' \rangle_{C^0} = \sum_{v \in V} \sum_{e : (v,e) \in \mathcal{I}} \langle \rho_{v \unlhd e}, \rho'_{v \unlhd e} \rangle_F,
\]
where $\langle \cdot, \cdot \rangle_F$ denotes the Frobenius inner product on $\mathrm{Hom}(\mathcal{F}(v), \mathcal{F}(e))$.
\item The $1$-cochain space is $C^1(G; \mathcal{H}^x_{\mathcal{I}}) = \bigoplus_{e \in E} \mathcal{H}^x_{\mathcal{I}}(e) = \bigoplus_{e \in E'} \mathcal{F}(e) = W$, since $\mathcal{H}^x_{\mathcal{I}}(e) = 0$ for $e \notin E'$. The inner product is inherited from the stalks of $\mathcal{F}$.
\item Under these inner products, $\delta^{\mathcal{H}^x_{\mathcal{I}}} = A$ and its adjoint is $(\delta^{\mathcal{H}^x_{\mathcal{I}}})^* = A^T$.
\end{enumerate}
\end{lemma}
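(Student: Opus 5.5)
The plan is to unwind the definitions; each of the three items is a direct identification, and the only real content sits in item 3.

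For item 1, I start from the definition of $C^0$ for a cellular sheaf: it is the direct sum of vertex stalks, with the orthogonal direct-sum inner product. By Definition~\ref{def:sheaf-free-structures}, $\mathcal{H}^x_{\mathcal{I}}(v) = T_v^{\mathcal{H}} = \bigoplus_{e:(v,e)\in\mathcal{I}} \mathrm{Hom}(\mathcal{F}(v),\mathcal{F}(e))$. Each summand carries the Frobenius inner product, and the summands are mutually orthogonal. Summing over $v$ then reindexes as a sum over $\mathcal{I}$, which yields $V_{\mathrm{maps}}$ with exactly the displayed inner product.

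For item 2, the same reasoning applies to edges. The summands with $e\notin E'$ are zero by definition, so only $\bigoplus_{e\in E'}\mathcal{F}(e)=W$ survives, with the inner product of $\mathcal{F}(e)$.

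For item 3, I fix the same edge orientation used to define $A$, so that the incidence signs $\sigma_{w,e}$ match those of the coboundary. Then $(\delta\rho)_e = \sum_{w \unlhd e} \sigma_{w,e}\,(\mathcal{H}^x_{\mathcal{I}})_{w\unlhd e}(\rho_w)$.

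\begin{itemize}
\item \textbf{Which incidences contribute.} I must check that only incidences in $\mathcal{I}$ contribute. The stalk at $w$ contains only the components $\rho_{w\unlhd e'}$ with $(w,e')\in\mathcal{I}$. The inherited restriction map is $\mathrm{ev}_{x_w}\circ\pi_e$, and $\pi_e$ of an element of $T_w^{\mathcal{H}}$ is zero unless $(w,e)\in\mathcal{I}$.
\item \textbf{Well-definedness of the subsheaf.} One needs the inherited map to land in $\mathcal{H}^x_{\mathcal{I}}(e)$. This is automatic: if $e\notin E'$, no incidence to $e$ is adapting, so $\pi_e$ vanishes on $T_w^{\mathcal{H}}$ and the map is zero.
\item \textbf{Agreement with $A$.} It follows that $(\delta\rho)_e = \sum_{(w,e)\in\mathcal{I}}\sigma_{w,e}\,\rho_{w\unlhd e}(x_w) = (A\rho)_e$ for $e\in E'$. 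For $e\notin E'$, both sides are absent or zero.
\end{itemize}

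Finally, $(\delta^{\mathcal{H}^x_{\mathcal{I}}})^*=A^T$, because the transpose $A^T$ in Theorem~\ref{thm:partial-learning-unreg} is, by convention, the adjoint with respect to the Frobenius inner product on $V_{\mathrm{maps}}$ and the stalk inner product on $W$. These are precisely the inner products identified in items 1 and 2, so the adjoints coincide.

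For concreteness, I would also record the explicit adjoint. On an incidence $(w,e)\in\mathcal{I}$ and for $\eta\in W$, it is $(A^T\eta)_{w\unlhd e} = \sigma_{w,e}\,\eta_e\, x_w^{T}$, an outer product. This follows from $\langle \rho(x_w),\eta_e\rangle = \langle \rho, \eta_e x_w^T\rangle_F$.

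The main obstacle is bookkeeping rather than mathematics. I need to ensure the sign and orientation conventions for $\delta^{\mathcal{H}^x_{\mathcal{I}}}$ match those defining $A$. I also need to handle carefully the edges where only one endpoint adapts, which is the content of the $\pi_e$ observation.
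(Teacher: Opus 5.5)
Your proposal is correct and follows the paper's proof essentially verbatim. Items 1--2 are definitional unwinding, and for item 3 you use the same ingredients: the observation that $\pi_e$ vanishes on $T_w^{\mathcal{H}}$ unless $(w,e)\in\mathcal{I}$, matching orientation signs to get $\delta^{\mathcal{H}^x_{\mathcal{I}}}=A$, and the identity $\langle \rho(x_w),\eta_e\rangle=\langle\rho,\eta_e x_w^T\rangle_F$ for the adjoint. Your extra check that the inherited restriction maps vanish on edges outside $E'$, so the subsheaf is well defined, is a small point the paper leaves implicit.
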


\begin{proof}
Items (1) and (2) follow directly from Definition~\ref{def:sheaf-free-structures}. For item (3), we verify that the coboundary formula matches the definition of $A$. For an oriented edge $e = u \to v$, the standard coboundary formula gives
\[
(\delta^{\mathcal{H}^x_{\mathcal{I}}} \rho)_e = (\mathcal{H}^x_{\mathcal{I}})_{v \unlhd e}(\rho_v) - (\mathcal{H}^x_{\mathcal{I}})_{u \unlhd e}(\rho_u).
\]
The restriction map $(\mathcal{H}^x_{\mathcal{I}})_{w \unlhd e}$ is nonzero only when $(w,e) \in \mathcal{I}$, in which case it evaluates the $e$-component at $x_w$: $(\mathcal{H}^x_{\mathcal{I}})_{w \unlhd e}(\rho_w) = \rho_{w \unlhd e}(x_w)$. The head vertex $v$ receives sign $+1$ and the tail vertex $u$ receives sign $-1$, matching the convention $\sigma_{w,e} = +1$ for $w = v$ and $\sigma_{w,e} = -1$ for $w = u$. Thus
\[
(\delta^{\mathcal{H}^x_{\mathcal{I}}} \rho)_e = \sum_{(w,e) \in \mathcal{I}} \sigma_{w,e} \cdot \rho_{w \unlhd e}(x_w) = (A\rho)_e.
\]
The adjoint formula follows from the identity $\langle Mx, y \rangle = \langle M, yx^T \rangle_F$ for matrices: given $\rho \in V_{\mathrm{maps}}$ and $y \in W$,
\[
\langle A\rho, y \rangle_W = \sum_{(w,e) \in \mathcal{I}} \sigma_{w,e} \langle \rho_{w \unlhd e}(x_w), y_e \rangle = \sum_{(w,e) \in \mathcal{I}} \sigma_{w,e} \langle \rho_{w \unlhd e}, y_e x_w^T \rangle_F = \langle \rho, A^T y \rangle_{V_{\mathrm{maps}}},
\]
so $(A^T y)_{w \unlhd e} = \sigma_{w,e} \cdot y_e x_w^T$ for each $(w,e) \in \mathcal{I}$.
\end{proof}


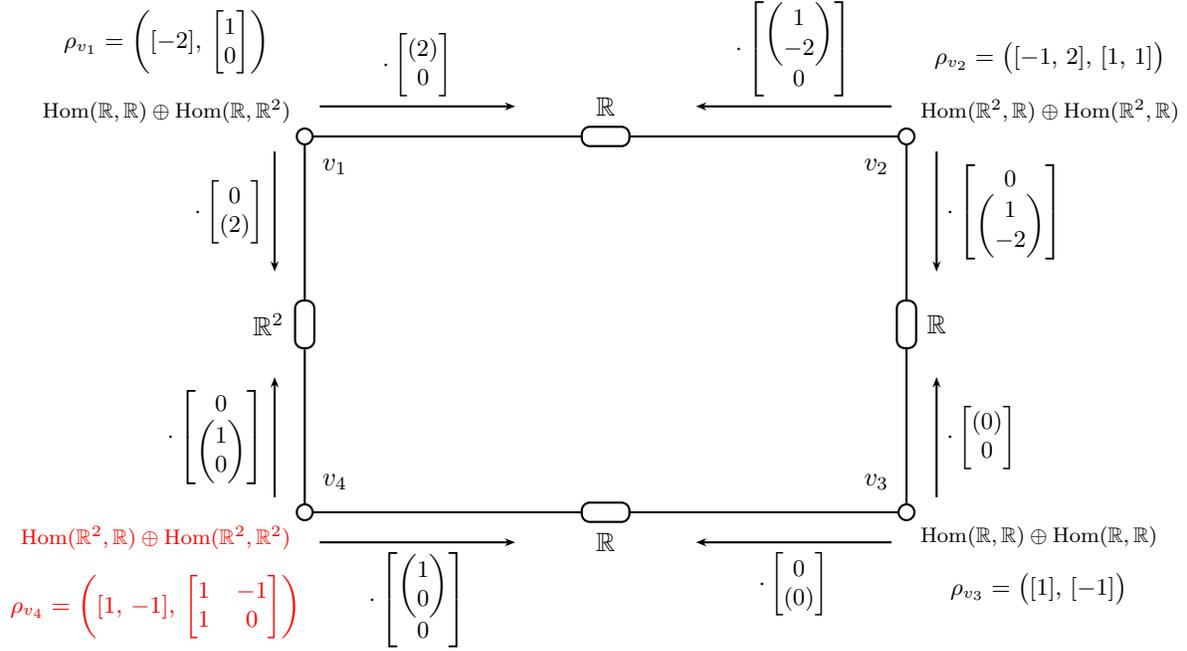
\begin{figure}[ht]
\centering
\begin{tikzpicture}[
    vertex/.style={circle, fill=white, draw=black, thick, minimum size=6pt, inner sep=0pt},
    edge stalk h/.style={rectangle, rounded corners=3pt, fill=white, draw=black, thick, minimum width=18pt, minimum height=5pt},
    edge stalk v/.style={rectangle, rounded corners=3pt, fill=white, draw=black, thick, minimum width=5pt, minimum height=18pt},
    arrow/.style={-{Stealth[length=4pt]}, thick},
    every node/.style={font=\small},
    scale=1.0
]

\draw[thick] (0,0) -- (8,0) -- (8,5) -- (0,5) -- cycle;

\node[edge stalk h] (e_top) at (4,5) {};
\node[edge stalk h] (e_bot) at (4,0) {};
\node[edge stalk v] (e_left) at (0,2.5) {};
\node[edge stalk v] (e_right) at (8,2.5) {};

\node[vertex] (v1) at (0,5) {};
\node[vertex] (v2) at (8,5) {};
\node[vertex] (v3) at (8,0) {};
\node[vertex] (v4) at (0,0) {};

\node at (0.4,4.6) {$v_1$};
\node at (7.6,4.6) {$v_2$};
\node at (7.6,0.4) {$v_3$};
\node at (0.4,0.4) {$v_4$};

\node[above left=2pt, font=\scriptsize, align=right] (hom1) at (v1) {$\mathrm{Hom}(\mathbb{R},\mathbb{R}) \oplus \mathrm{Hom}(\mathbb{R},\mathbb{R}^2)$};
\node[above right=2pt, font=\scriptsize, align=left] (hom2) at (v2) {$\mathrm{Hom}(\mathbb{R}^2,\mathbb{R}) \oplus \mathrm{Hom}(\mathbb{R}^2,\mathbb{R})$};
\node[below right=2pt, font=\scriptsize, align=left] (hom3) at (v3) {$\mathrm{Hom}(\mathbb{R},\mathbb{R}) \oplus \mathrm{Hom}(\mathbb{R},\mathbb{R})$};
\node[below left=2pt, font=\scriptsize, align=right, red] (hom4) at (v4) {$\mathrm{Hom}(\mathbb{R}^2,\mathbb{R}) \oplus \mathrm{Hom}(\mathbb{R}^2,\mathbb{R}^2)$};

\node[above=2pt, font=\footnotesize, anchor=south] at (hom1.north) {$\rho_{v_1} = \left( [-2],\, \begin{bmatrix} 1 \\ 0 \end{bmatrix} \right)$};
\node[above=2pt, font=\footnotesize, anchor=south] at (hom2.north) {$\rho_{v_2} = \bigl( [-1,\, 2],\, [1,\, 1] \bigr)$};
\node[below=2pt, font=\footnotesize, anchor=north] at (hom3.south) {$\rho_{v_3} = \bigl( [1],\, [-1] \bigr)$};
\node[below=2pt, font=\footnotesize, anchor=north, red] at (hom4.south) {$\rho_{v_4} = \left( [1,\, {-}1],\, \begin{bmatrix} 1 & -1 \\ 1 & 0 \end{bmatrix} \right)$};

\node[above=4pt, font=\small] at (e_top) {$\mathbb{R}$};
\node[below=4pt, font=\small] at (e_bot) {$\mathbb{R}$};
\node[left=4pt, font=\small] at (e_left) {$\mathbb{R}^2$};
\node[right=4pt, font=\small] at (e_right) {$\mathbb{R}$};


\draw[arrow] (0.2,5.4) -- (2.8,5.4);
\node[above, font=\footnotesize] at (1.5,5.4) {$\cdot\begin{bmatrix} (2) \\ 0 \end{bmatrix}$};

\draw[arrow] (-0.4,4.8) -- (-0.4,3.2);
\node[left, font=\footnotesize] at (-0.4,4.0) {$\cdot\begin{bmatrix} 0 \\ (2) \end{bmatrix}$};

\draw[arrow] (7.8,5.4) -- (5.2,5.4);
\node[above, font=\footnotesize] at (6.5,5.4) {$\cdot\begin{bmatrix} \begin{pmatrix} 1 \\ -2 \end{pmatrix} \\ 0 \end{bmatrix}$};

\draw[arrow] (8.4,4.8) -- (8.4,3.2);
\node[right, font=\footnotesize] at (8.4,4.0) {$\cdot\begin{bmatrix} 0 \\ \begin{pmatrix} 1 \\ -2 \end{pmatrix} \end{bmatrix}$};

\draw[arrow] (8.4,0.2) -- (8.4,1.8);
\node[right, font=\footnotesize] at (8.4,1.0) {$\cdot\begin{bmatrix} (0) \\ 0 \end{bmatrix}$};

\draw[arrow] (7.8,-0.4) -- (5.2,-0.4);
\node[below, font=\footnotesize] at (6.5,-0.4) {$\cdot\begin{bmatrix} 0 \\ (0) \end{bmatrix}$};

\draw[arrow] (0.2,-0.4) -- (2.8,-0.4);
\node[below, font=\footnotesize] at (1.5,-0.4) {$\cdot\begin{bmatrix} \begin{pmatrix} 1 \\ 0 \end{pmatrix} \\ 0 \end{bmatrix}$};

\draw[arrow] (-0.4,0.2) -- (-0.4,1.8);
\node[left, font=\footnotesize] at (-0.4,1.0) {$\cdot\begin{bmatrix} 0 \\ \begin{pmatrix} 1 \\ 0 \end{pmatrix} \end{bmatrix}$};

\end{tikzpicture}
\caption{The sheaf of structures $\mathcal{H}^x$ with initial $0$-cochain $\rho(0)$, corresponding to the example in Figure~\ref{fig:stubborn-expressions-example}. Vertex stalks are direct sums of Hom spaces; the stalk at $v_4$ (red) is fixed. The $0$-cochains $\rho_v$ are tuples of matrices corresponding to restriction maps in the original discourse sheaf $\mathcal{F}$. The restriction maps in $\mathcal{H}^x$ are evaluation maps at the fixed opinions $x$: the notation $\cdot\begin{bmatrix} (x_v) \\ 0 \end{bmatrix}$ extracts the first component and right-multiplies by $x_v$, while $\cdot\begin{bmatrix} 0 \\ (x_v) \end{bmatrix}$ extracts the second. Note that $x_{v_3} = 0$, so the restriction maps from $v_3$ annihilate any $0$-cochain, creating an unavoidable obstruction to consensus on edge $e_{34}$.}
\label{fig:sheaf-admissible-structures}
\end{figure}

\begin{proposition}[Cohomological Characterization]
\label{prop:structure-cohomology}
Under the identification $V_{\mathrm{maps}} \cong C^0(G; \mathcal{H}^x_{\mathcal{I}})$:
\begin{enumerate}
\item The coboundary of $\mathcal{H}^x_{\mathcal{I}}$ coincides with the discrepancy operator: $\delta^{\mathcal{H}^x_{\mathcal{I}}} = A$.
\item The Laplacian of $\mathcal{H}^x_{\mathcal{I}}$ is the structure Laplacian: $L_{\mathcal{H}^x_{\mathcal{I}}} = A^T A$.
\item The forcing term $c$ from Theorem~\ref{thm:partial-learning-unreg} arises from the stubborn configuration: $c_e = (\delta^{\mathcal{H}^x} \tilde{\sigma}_0)_e$ for each $e \in E'$, where $\tilde{\sigma}_0 = \iota_{S^{\mathcal{H}}}(\sigma_0)$ embeds the fixed restriction maps into $C^0(G; \mathcal{H}^x)$.
\end{enumerate}
\end{proposition}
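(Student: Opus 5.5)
Items (1) and (2) follow almost directly from Lemma~\ref{lem:cochain-structure}, so the plan is to invoke it and then treat (3) as the only genuinely new computation. For (1), Lemma~\ref{lem:cochain-structure}(3) already identifies $\delta^{\mathcal{H}^x_{\mathcal{I}}}$ with $A$ under $C^0(G;\mathcal{H}^x_{\mathcal{I}}) = V_{\mathrm{maps}}$ and $C^1(G;\mathcal{H}^x_{\mathcal{I}}) = W$. I will recall the coboundary formula and note that the incidences not in $\mathcal{I}$ carry no stalk in $\mathcal{H}^x_{\mathcal{I}}$. Likewise, edges outside $E'$ have zero edge stalk. So only the terms $\sigma_{w,e}\rho_{w\unlhd e}(x_w)$ with $(w,e)\in\mathcal{I}$ survive, which is exactly the defining formula of $A$.

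For (2), the sheaf Laplacian is by definition $L = \delta^T\delta$, with the transpose taken with respect to the cochain inner products. Those inner products are the Frobenius inner product on $V_{\mathrm{maps}}$ and the stalk inner product on $W$, and Lemma~\ref{lem:cochain-structure} shows the adjoint of $\delta^{\mathcal{H}^x_{\mathcal{I}}}$ is $A^T$, given explicitly by $(A^Ty)_{w\unlhd e} = \sigma_{w,e}\, y_e x_w^T$. Composing gives $L_{\mathcal{H}^x_{\mathcal{I}}} = A^TA$. I will remark that the ``$A^T$'' appearing in the gradient flow of Theorem~\ref{thm:partial-learning-unreg} is precisely this Frobenius adjoint. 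That is the point that makes the identification meaningful rather than notational.

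For (3), I will let $\sigma_0 \in C^0(S^{\mathcal{H}}) = \bigoplus_v S^{\mathcal{H}}_v$ be the tuple of fixed restriction maps $(\mathcal{F}_{w\unlhd e})_{(w,e)\notin\mathcal{I}}$. Its embedding $\tilde\sigma_0 \in C^0(G;\mathcal{H}^x)$ then has zero components at adapting incidences. Applying the coboundary of the full sheaf $\mathcal{H}^x$ on an oriented edge $e = u\to v$ gives
\[
(\delta^{\mathcal{H}^x}\tilde\sigma_0)_e = \sum_{w\unlhd e}\sigma_{w,e}\,(\tilde\sigma_0)_{w\unlhd e}(x_w) = \sum_{(w,e)\notin\mathcal{I}} \sigma_{w,e}\,\mathcal{F}_{w\unlhd e}(x_w),
\]
since the adapting components vanish. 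This is $c_e$, and restricting to $e\in E'$ gives the claim.

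The main obstacle is only bookkeeping. The sign convention $\sigma_{w,e}$ must match the head/tail convention of the coboundary, and the statement restricts to $e\in E'$ because $c$ lives in $W$. Both facts were settled in Lemma~\ref{lem:cochain-structure}. I will also observe, by analogy with the block decomposition in Section~\ref{sec:stubborn-opinions}, that the full discrepancy is $\delta^{\mathcal{H}^x}(\tilde\sigma_0 + \tilde\rho) = c + A\rho$ on $E'$. The gradient flow of Theorem~\ref{thm:partial-learning-unreg} is therefore the forced sheaf diffusion $\dot\rho = -\beta(L_{\mathcal{H}^x_{\mathcal{I}}}\rho + A^T c)$. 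Here $A^Tc$ plays the role of $L_{QS}u$, which exhibits the promised duality.
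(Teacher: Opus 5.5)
Your proposal is correct and follows essentially the same route as the paper: items (1) and (2) are read off from Lemma~\ref{lem:cochain-structure}, and item (3) is the direct computation of $\delta^{\mathcal{H}^x}\tilde{\sigma}_0$, in which the adapting components vanish and the surviving terms give $c_e$. Your closing observation that the flow becomes $\dot\rho = -\beta(L_{\mathcal{H}^x_{\mathcal{I}}}\rho + A^T c)$, with $A^T c$ playing the role of $L_{QS}u$, matches the paper's summary table and is a harmless addition.
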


\begin{proof}
Item (1) follows from Lemma~\ref{lem:cochain-structure}: the sign conventions in the coboundary formula match those defining $A$, so $\delta^{\mathcal{H}^x_{\mathcal{I}}} = A$. Item (2) follows from the adjoint relation established in the lemma: $L_{\mathcal{H}^x_{\mathcal{I}}} = (\delta^{\mathcal{H}^x_{\mathcal{I}}})^* \delta^{\mathcal{H}^x_{\mathcal{I}}} = A^T A$. For item (3), the embedded stubborn configuration $\tilde{\sigma}_0$ has $(\tilde{\sigma}_0)_{w \unlhd e} = \mathcal{F}_{w \unlhd e}$ for $(w,e) \notin \mathcal{I}$ and zero otherwise. Applying the coboundary $\delta^{\mathcal{H}^x}$ of the full sheaf gives
\[
(\delta^{\mathcal{H}^x} \tilde{\sigma}_0)_e = \sum_{(w,e) \notin \mathcal{I}} \sigma_{w,e} \cdot \mathcal{F}_{w \unlhd e}(x_w) = c_e.
\]
\end{proof}

The partial learning dynamics of Theorem~\ref{thm:partial-learning-unreg} is therefore sheaf Laplacian diffusion on $\mathcal{H}^x_{\mathcal{I}}$, and the equilibrium equation $A^T A \rho^\infty = -A^T c$ is the sheaf Poisson equation with forcing from the stubborn structures. The parallel with Section~\ref{sec:stubborn-opinions} is now exact: opinions evolve on $\mathcal{Q}$ with forcing from stubborn opinions, while structures evolve on $\mathcal{H}^x_{\mathcal{I}}$ with forcing from stubborn structures. The correspondence is summarized below.

\begin{table}[ht]
\centering
\caption{Analogy between stubborn opinions and stubborn expressions.}
\begin{tabular}{lcc}
\toprule
& \textbf{Stubborn Opinions} & \textbf{Stubborn Expressions} \\
\midrule
\textbf{Fixed data} 
& Sheaf $\mathcal{F}$, stubborn values $u$ 
& Opinions $x$, stubborn maps \\
\textbf{Variable} 
& Free opinions $y \in C^0(\mathcal{Q})$ 
& Adapting maps $\rho \in C^0(\mathcal{H}^x_{\mathcal{I}})$ \\
\textbf{Auxiliary sheaf} 
& $\mathcal{Q}$ 
& $\mathcal{H}^x_{\mathcal{I}}$ \\
\textbf{Laplacian} 
& $L_{\mathcal{Q}}$ 
& $L_{\mathcal{H}^x_{\mathcal{I}}}$ \\
\textbf{Forcing term} 
& $L_{QS} u$ 
& $A^T c$ \\
\textbf{Poisson equation} 
& $L_{\mathcal{Q}} y = -L_{QS} u$ 
& $L_{\mathcal{H}^x_{\mathcal{I}}} \rho = -A^T c$ \\
\bottomrule
\end{tabular}
\end{table}


\section{Constrained Joint Diffusion with Stubborn Agents}\label{sec:joint}

Sections~\ref{sec:stubborn-opinions} and~\ref{sec:stubborn-expressions} developed parallel theories: stubborn opinions induce constrained dynamics on a subsheaf, and stubborn expressions induce constrained dynamics on an auxiliary sheaf of structures. In practice, beliefs and communication evolve together, and their interaction determines whether a network reaches consensus, settles into productive disagreement, or fragments. We now unify these perspectives by allowing agents to be stubborn in their private beliefs while the network's communication structure evolves subject to its own constraints.

The joint dynamics depend critically on which communication channels are permitted to adapt. We identify four natural scenarios distinguished by the adaptation patterns on edges incident to stubborn vertices. In Scenarios 1 and 2, all edges have symmetric adaptation status (both endpoints adapt or both freeze); in Scenarios 3 and 4, mixed edges exhibit asymmetric adaptation, creating qualitatively different dynamics.

This section proceeds as follows. We first set up the constrained joint dynamics (\S\ref{sec:joint-dynamics}), then classify edges by their adaptation symmetry (\S\ref{ssec:edge-classification}). The four scenarios are described in \S\ref{ssec:four-scenarios}, and convergence analysis follows in \S\ref{ssec:convergence}. We conclude with conservation laws that ensure the system cannot reach trivial equilibria through opinion collapse or communication shutdown (\S\ref{ssec:conservation-laws}).


\subsection{The Constrained Joint Dynamics}\label{sec:joint-dynamics}

We retain the notation of Sections~\ref{sec:stubborn-opinions} and~\ref{sec:stubborn-expressions}. For opinions: $U \subseteq V$ is the set of stubborn vertices, $\mathcal{F}(v) = S_v \oplus T_v$ decomposes each stalk into stubborn and free directions, and the global state is $x = \tilde{u} + \tilde{y}$ where $\tilde{u} = \iota_S(u)$ is fixed and $\tilde{y} = \iota_{\mathcal{Q}}(y)$ evolves. For structures: $\mathcal{I} \subseteq \{(v,e) : v \unlhd e\}$ is the set of adapting incidences, and the coboundary operator $\delta$ encodes the full collection of restriction maps $\mathcal{F}_{v \unlhd e}$. We write $F = V \setminus U$ for the set of free vertices.

To formulate the constrained dynamics, let
\[
\mathcal{M} = \left\{\frac{d\delta}{dt} : \frac{d}{dt}(\delta_{v \unlhd e}) = 0 \text{ for all } (v,e) \notin \mathcal{I}\right\}
\]
be the subspace of structure velocities that leave non-adapting restriction maps unchanged, and let $\Pi_{\mathcal{M}}$ denote orthogonal projection onto $\mathcal{M}$ with respect to the Frobenius inner product. Note that $\mathcal{M}$ is isomorphic to $V_{\mathrm{maps}}$ from Section~\ref{sec:stubborn-expressions}: specifying a velocity in $\mathcal{M}$ is equivalent to specifying how each adapting restriction map changes.

The \emph{constrained joint dynamics} couple opinion evolution with structure adaptation:
\begin{equation}\label{eq:constrained_joint}
\begin{aligned}
\frac{dy}{dt} &= -\alpha \, P_{\mathcal{Q}} \bigl( \delta(t)^T \delta(t) \, x(t) \bigr), \\
\frac{d\delta}{dt} &= -\beta \, \Pi_{\mathcal{M}} \bigl( \delta(t) \, x(t) \, x(t)^T \bigr).
\end{aligned}
\end{equation}
The first equation is sheaf diffusion projected onto the free opinion space, as in Section~\ref{sec:stubborn-opinions}. The second is gradient descent on the discrepancy energy $\frac{1}{2}\|\delta x\|^2$ with respect to the restriction maps, projected onto allowable velocities. The rates $\alpha, \beta > 0$ govern the relative speeds of opinion and structure adaptation. Unconstrained joint dynamics of this form have been studied for cellular sheaves of real vector spaces by Hansen and Ghrist~\cite{hansen2020}, for cellular sheaves of Hilbert spaces by Gould~\cite{gould2024hilbert}, and discretized into sheaf neural network architectures by Hernandez Caralt et al.~\cite{hernandezcaralt2024joint}.

The right-hand side of \eqref{eq:constrained_joint} is polynomial in $(y, \delta)$, so standard ODE theory guarantees local existence and uniqueness of solutions. Global existence holds on any trajectory that remains bounded. When $E_A = \varnothing$, boundedness of $\|\delta\|_F$ follows from Theorem~\ref{thm:constrained_convergence}; boundedness of $y$ is ensured by either $H^0(G; \mathcal{Q}) = 0$ or regularization (Appendix~\ref{app:regularization}).


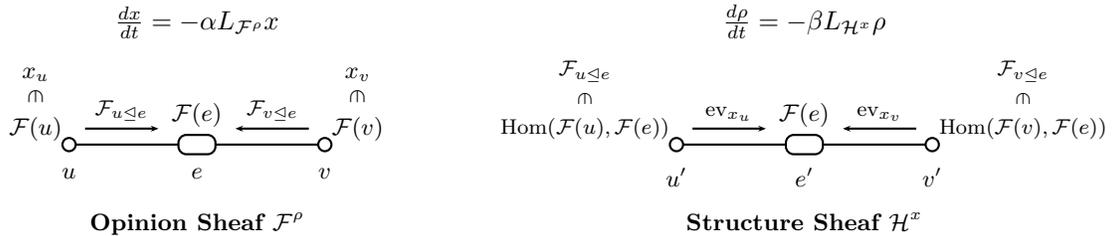
\begin{figure}[ht]
\centering
\begin{tikzpicture}[
    vertex/.style={circle, fill=white, draw=black, thick, minimum size=5pt, inner sep=0pt},
    edge stalk h/.style={rectangle, rounded corners=3pt, fill=white, draw=black, thick, minimum width=14pt, minimum height=4pt},
    arrow/.style={-{Stealth[length=3pt]}, thick},
    every node/.style={font=\footnotesize},
    scale=0.85
]


\node[above, font=\small] at (2,1.5) {$\frac{dx}{dt} = -\alpha L_{\mathcal{F}^{\rho}} x$};

\draw[thick] (0,0) -- (4,0);

\node[edge stalk h] (e1) at (2,0) {};

\node[vertex] (u1) at (0,0) {};
\node[vertex] (v1) at (4,0) {};

\node[below=5pt] at (u1) {$u$};
\node[below=5pt] at (v1) {$v$};
\node[below=5pt] at (e1) {$e$};

\node[above=6pt, anchor=east] (Fu) at (0.05,0) {$\mathcal{F}(u)$};
\node[above=6pt, anchor=west] (Fv) at (3.95,0) {$\mathcal{F}(v)$};

\node[above=3pt] at (e1) {$\mathcal{F}(e)$};

\node[above=-3pt] at (Fu.north) {\rotatebox{-90}{$\in$}};
\node[above=6pt] at (Fu.north) {$x_u$};

\node[above=-3pt] at (Fv.north) {\rotatebox{-90}{$\in$}};
\node[above=6pt] at (Fv.north) {$x_v$};

\draw[arrow] (0.25,0.25) -- (1.4,0.25);
\node[above=-1pt, font=\scriptsize] at (0.82,0.25) {$\mathcal{F}_{u \unlhd e}$};

\draw[arrow] (3.75,0.25) -- (2.6,0.25);
\node[above=-1pt, font=\scriptsize] at (3.18,0.25) {$\mathcal{F}_{v \unlhd e}$};

\node[below=22pt] at (2,0) {\textbf{Opinion Sheaf $\mathcal{F}^{\rho}$}};


\begin{scope}[xshift=9.5cm]

\node[above, font=\small] at (2,1.5) {$\frac{d\rho}{dt}  = -\beta L_{\mathcal{H}^{x}} \rho$};

\draw[thick] (0,0) -- (4,0);

\node[edge stalk h] (e2) at (2,0) {};

\node[vertex] (u2) at (0,0) {};
\node[vertex] (v2) at (4,0) {};

\node[below=5pt] at (u2) {$u'$};
\node[below=5pt] at (v2) {$v'$};
\node[below=5pt] at (e2) {$e'$};

\node[above=6pt, anchor=east, font=\scriptsize] (Hu) at (0.05,0) {$\mathrm{Hom}(\mathcal{F}(u), \mathcal{F}(e))$};
\node[above=6pt, anchor=west, font=\scriptsize] (Hv) at (3.95,0) {$\mathrm{Hom}(\mathcal{F}(v), \mathcal{F}(e))$};

\node[above=3pt] at (e2) {$\mathcal{F}(e)$};

\node[above=-3pt, font=\scriptsize] at (Hu.north) {\rotatebox{-90}{$\in$}};
\node[above=6pt, font=\scriptsize] at (Hu.north) {$\mathcal{F}_{u \unlhd e}$};

\node[above=-3pt, font=\scriptsize] at (Hv.north) {\rotatebox{-90}{$\in$}};
\node[above=6pt, font=\scriptsize] at (Hv.north) {$\mathcal{F}_{v \unlhd e}$};

\draw[arrow] (0.25,0.25) -- (1.4,0.25);
\node[above=-1pt, font=\scriptsize] at (0.82,0.25) {$\mathrm{ev}_{x_u}$};

\draw[arrow] (3.75,0.25) -- (2.6,0.25);
\node[above=-1pt, font=\scriptsize] at (3.18,0.25) {$\mathrm{ev}_{x_v}$};

\node[below=22pt] at (2,0) {\textbf{Structure Sheaf $\mathcal{H}^{x}$}};

\end{scope}

\end{tikzpicture}
\caption{The coupled dynamics of opinions and communication structures. \textbf{Left:} The opinion sheaf $\mathcal{F}^{\rho}$ over a single edge, with opinions $x_u \in \mathcal{F}(u)$ and $x_v \in \mathcal{F}(v)$ evolving under the sheaf Laplacian $L_{\mathcal{F}^{\rho}}$. \textbf{Right:} The structure sheaf $\mathcal{H}^{x}$ whose vertex stalks are the spaces $\mathrm{Hom}(\mathcal{F}(u), \mathcal{F}(e))$ and $\mathrm{Hom}(\mathcal{F}(v), \mathcal{F}(e))$; when stalks are finite-dimensional real vector spaces, these are simply spaces of matrices of compatible dimensions. The restriction maps $\mathrm{ev}_{x_u}$ and $\mathrm{ev}_{x_v}$ are evaluation maps that apply each matrix to the current opinion vector, i.e., right multiplication $\cdot \, x_u$ and  $\cdot \, x_v$. The two dynamics are coupled: $L_{\mathcal{F}^{\rho}}$ depends on the current structure $\rho$, while $L_{\mathcal{H}^{x}}$ depends on the current opinions $x$.}
\label{fig:coupled-sheaves}
\end{figure}

\subsection{Classification by Edge Type}\label{ssec:edge-classification}

The behavior of the system depends on two factors: which vertices hold stubborn opinions, and which restriction maps are permitted to adapt. We introduce two complementary partitions of the edge set.

The first partition is based on the stubbornness of incident vertices:
\begin{align*}
E_{FF} &= \{e \in E : \text{both endpoints in } F\}, \\
E_{UU} &= \{e \in E : \text{both endpoints in } U\}, \\
E_{UF} &= \{e \in E : \text{one endpoint in each}\}.
\end{align*}
Edges in $E_{FF}$ connect vertices with no stubborn directions ($S_v = \{0\}$ at both endpoints), so both agents' entire opinion vectors evolve. Edges in $E_{UU}$ connect vertices that each have at least one stubborn direction 
($S_v \neq \{0\}$ at both endpoints), though these agents may still have free subspaces. Edges in $E_{UF}$ are mixed, connecting a vertex with stubborn directions to one without.

The second classification concerns the adaptation pattern of restriction maps.

\begin{definition}[Edge Adaptation Type]
An edge $e = \{u, v\}$ is \emph{Type S} (symmetric) if both incidences share the same adaptation status: either $(u, e), (v, e) \in \mathcal{I}$ or $(u, e), (v, e) \notin \mathcal{I}$. An edge is \emph{Type A} (asymmetric) if exactly one of $(u,e), (v,e)$ belongs to $\mathcal{I}$. We write $E_S$ and $E_A$ for the sets of Type S and Type A edges.
\end{definition}

This classification determines key analytical properties. As we establish in Theorem~\ref{thm:constrained_convergence}, the Frobenius norm $\|\delta\|_F$ is nonincreasing along trajectories if and only if $E_A = \varnothing$. When Type A edges are present, asymmetric adaptation can cause the norm to increase, and regularization may be required to ensure bounded trajectories.

\subsection{The Four Scenarios}\label{ssec:four-scenarios}

The edge partitions $E_{FF}$, $E_{UU}$, $E_{UF}$ and the adaptation set $\mathcal{I}$ can be combined in many ways. We identify four representative \emph{edge policies} that arise naturally in social contexts, illustrated in Figure~\ref{fig:four-scenarios} and summarized in Table~\ref{tab:edge-policies}. Rather than global network configurations, these scenarios describe local rules for individual edges; a given network may apply different policies to different edges.

Scenarios 1 and 2 apply to edges of any type.

\textbf{Scenario 1: Universal Adaptation.} Both restriction maps evolve, regardless of whether the endpoints are free or stubborn. Agents may hold firm beliefs in certain directions, but they remain diplomatically flexible in how they express themselves to all neighbors. A scientist with strong methodological commitments might still learn to explain their work differently to collaborators, funding agencies, and the public.

\textbf{Scenario 2: Structural Stubbornness.} Both restriction maps are frozen. Communication on this edge follows fixed protocols regardless of how opinions evolve. This models institutional rigidity or longstanding conventions that persist even as underlying beliefs shift.

Scenarios 3 and 4 apply specifically to mixed edges in $E_{UF}$, introducing asymmetric adaptation.

\textbf{Scenario 3: Accommodation.} The free agent adapts toward the stubborn neighbor while the stubborn agent maintains fixed expression. This models deference: newcomers learn to speak the language of established members, employees adapt to management's communication style, students learn disciplinary conventions from advisors.

\textbf{Scenario 4: Outreach.} The stubborn agent adapts expression to reach the free neighbor while the free agent communicates authentically. This models persuasion: a politician tailors rhetoric to different constituencies, a teacher adjusts explanations to student backgrounds, an activist frames arguments for skeptical audiences.

A network may apply different policies to different edges. For instance, one might use Scenario~1 on $E_{FF}$ edges, Scenario~2 on $E_{UU}$ edges, and Scenario~3 on $E_{UF}$ edges. The key analytical distinction is whether each edge is Type~S or Type~A. Scenarios~1 and~2 produce Type~S edges (both incidences share the same adaptation status). Scenarios~3 and~4 produce Type~A edges (exactly one incidence adapts). Theorem~\ref{thm:constrained_convergence} shows that $\|\delta\|_F$ is monotone nonincreasing, hence a priori bounded, exactly when all edges are Type~S. In the presence of Type~A edges, $\|\delta\|_F$ may increase, and boundedness requires additional hypotheses or regularization.

Scenarios~3 and~4 superficially resemble antagonistic networks~\cite{altafini2012,altafini2013,altafiniCeragioli2018,altafiniLini2015}, where negative edges encode distrust and agents seek to disagree. The key difference is that here agents still seek agreement, but adaptation is asymmetric. When negative restriction maps do emerge, they reflect learned accommodation rather than structural antagonism~\cite{antal2005,antal2006}.

\begin{table}[ht]
\centering
\caption{Edge policies and their adaptation patterns. For Scenarios~3--4 (mixed edges), $u$ denotes the stubborn endpoint and $v$ the free endpoint.}
\label{tab:edge-policies}
\begin{tabular}{@{}clccc@{}}
\toprule
Scenario & Policy & $(u,e) \in \mathcal{I}$? & $(v,e) \in \mathcal{I}$? & Type \\
\midrule
1 & Universal Adaptation & Yes & Yes & S \\
2 & Structural Stubbornness & No & No & S \\
3 & Accommodation & No & Yes & A \\
4 & Outreach & Yes & No & A \\
\bottomrule
\end{tabular}
\end{table}


\begin{figure}[ht]
\centering
\begin{tikzpicture}[
    vertex/.style={circle, fill=white, draw=black, thick, minimum size=5pt, inner sep=0pt},
    edge stalk h/.style={rectangle, rounded corners=3pt, fill=white, draw=black, thick, minimum width=16pt, minimum height=4pt},
    arrow/.style={-{Stealth[length=4pt]}, thick},
    every node/.style={font=\small},
    scale=1.0
]


\draw[thick] (0,0) -- (5,0);

\node[edge stalk h] (e1) at (2.5,0) {};
\node[below=6pt] at (e1) {$e$};

\node[vertex] (u1) at (0,0) {};
\node[vertex] (v1) at (5,0) {};

\node[below=6pt] at (u1) {$u$};
\node[below=6pt] at (v1) {$v$};

\node[above left] at (u1) {${\color{red}\mathbb{R}} \oplus \mathbb{R}$};
\node[above right] at (v1) {$\mathbb{R}$};

\node[above=3pt] at (e1) {$\mathbb{R}$};

\node[above=18pt, font=\footnotesize] at (-0.5,0) {$x_u = \begin{bmatrix} {\color{red}1} \\ 1 \end{bmatrix}$};
\node[above=18pt, font=\footnotesize] at (5.5,0) {$x_v = -1$};

\draw[arrow] (0.3,0.3) -- (1.8,0.3);
\node[above, font=\scriptsize] at (1.05,0.3) {$\begin{bmatrix} \frac{1}{2} & \frac{1}{2} \end{bmatrix}$};

\draw[arrow] (4.7,0.3) -- (3.2,0.3);
\node[above, font=\scriptsize] at (3.95,0.3) {$[1]$};

\node[below=28pt, font=\footnotesize] at (2.5,0) {\textbf{Scenario 1: Universal Adaptation}};


\begin{scope}[xshift=9cm]

\draw[thick] (0,0) -- (5,0);

\node[edge stalk h] (e2) at (2.5,0) {};
\node[below=6pt] at (e2) {$e$};

\node[vertex] (u2) at (0,0) {};
\node[vertex] (v2) at (5,0) {};

\node[below=6pt] at (u2) {$u$};
\node[below=6pt] at (v2) {$v$};

\node[above left] at (u2) {${\color{red}\mathbb{R}} \oplus \mathbb{R}$};
\node[above right] at (v2) {$\mathbb{R}$};

\node[above=3pt] at (e2) {$\mathbb{R}$};

\node[above=18pt, font=\footnotesize] at (-0.5,0) {$x_u = \begin{bmatrix} {\color{red}1} \\ 1 \end{bmatrix}$};
\node[above=18pt, font=\footnotesize] at (5.5,0) {$x_v = -1$};

\draw[arrow, red] (0.3,0.3) -- (1.8,0.3);
\node[above, font=\scriptsize, red] at (1.05,0.3) {$\begin{bmatrix} \frac{1}{2} & \frac{1}{2} \end{bmatrix}$};

\draw[arrow, red] (4.7,0.3) -- (3.2,0.3);
\node[above, font=\scriptsize, red] at (3.95,0.3) {$[1]$};

\node[below=28pt, font=\footnotesize] at (2.5,0) {\textbf{Scenario 2: Structural Stubbornness}};

\end{scope}


\begin{scope}[yshift=-4cm]

\draw[thick] (0,0) -- (5,0);

\node[edge stalk h] (e3) at (2.5,0) {};
\node[below=6pt] at (e3) {$e$};

\node[vertex] (u3) at (0,0) {};
\node[vertex] (v3) at (5,0) {};

\node[below=6pt] at (u3) {$u$};
\node[below=6pt] at (v3) {$v$};

\node[above left] at (u3) {${\color{red}\mathbb{R}} \oplus \mathbb{R}$};
\node[above right] at (v3) {$\mathbb{R}$};

\node[above=3pt] at (e3) {$\mathbb{R}$};

\node[above=18pt, font=\footnotesize] at (-0.5,0) {$x_u = \begin{bmatrix} {\color{red}1} \\ 1 \end{bmatrix}$};
\node[above=18pt, font=\footnotesize] at (5.5,0) {$x_v = -1$};

\draw[arrow, red] (0.3,0.3) -- (1.8,0.3);
\node[above, font=\scriptsize, red] at (1.05,0.3) {$\begin{bmatrix} \frac{1}{2} & \frac{1}{2} \end{bmatrix}$};

\draw[arrow] (4.7,0.3) -- (3.2,0.3);
\node[above, font=\scriptsize] at (3.95,0.3) {$[1]$};

\node[below=28pt, font=\footnotesize] at (2.5,0) {\textbf{Scenario 3: Accommodation}};

\end{scope}


\begin{scope}[xshift=9cm, yshift=-4cm]

\draw[thick] (0,0) -- (5,0);

\node[edge stalk h] (e4) at (2.5,0) {};
\node[below=6pt] at (e4) {$e$};

\node[vertex] (u4) at (0,0) {};
\node[vertex] (v4) at (5,0) {};

\node[below=6pt] at (u4) {$u$};
\node[below=6pt] at (v4) {$v$};

\node[above left] at (u4) {${\color{red}\mathbb{R}} \oplus \mathbb{R}$};
\node[above right] at (v4) {$\mathbb{R}$};

\node[above=3pt] at (e4) {$\mathbb{R}$};

\node[above=18pt, font=\footnotesize] at (-0.5,0) {$x_u = \begin{bmatrix} {\color{red}1} \\ 1 \end{bmatrix}$};
\node[above=18pt, font=\footnotesize] at (5.5,0) {$x_v = -1$};

\draw[arrow] (0.3,0.3) -- (1.8,0.3);
\node[above, font=\scriptsize] at (1.05,0.3) {$\begin{bmatrix} \frac{1}{2} & \frac{1}{2} \end{bmatrix}$};

\draw[arrow, red] (4.7,0.3) -- (3.2,0.3);
\node[above, font=\scriptsize, red] at (3.95,0.3) {$[1]$};

\node[below=28pt, font=\footnotesize] at (2.5,0) {\textbf{Scenario 4: Outreach}};

\end{scope}

\end{tikzpicture}
\caption{The four edge policies illustrated on a mixed edge ($E_{UF}$). Agent $u$ has opinion space $\mathbb{R} \oplus \mathbb{R}$ with the first coordinate stubborn (shown in red); agent $v$ is free with opinion space $\mathbb{R}$. Restriction maps are colored red when frozen and black when adapting. \textbf{Scenario 1 (Universal Adaptation):} Both maps evolve. \textbf{Scenario 2 (Structural Stubbornness):} Both maps are frozen. \textbf{Scenario 3 (Accommodation):} The free agent adapts; the stubborn agent's map is frozen. \textbf{Scenario 4 (Outreach):} The stubborn agent adapts; the free agent's map is frozen. Scenarios~1 and~2 apply to any edge; Scenarios~3 and~4 apply only to mixed edges.}
\label{fig:four-scenarios}
\end{figure}

\subsection{Convergence Analysis}\label{ssec:convergence}

The joint dynamics~\eqref{eq:constrained_joint} are gradient flow of a nonconvex energy on the affine subspace defined by frozen coordinates (stubborn opinions and non-adapting restriction maps). The central object is the total disagreement energy
\[
\Psi(y, \delta) = \tfrac{1}{2}\|\delta x\|^2,
\]
which measures how far the system is from perfect expressed consensus. We show that $\Psi$ decreases along trajectories, but boundedness of the state $(y, \delta)$ depends on the edge adaptation structure.

\begin{theorem}[Convergence of Constrained Joint Diffusion]\label{thm:constrained_convergence}
Let $(y(t), \delta(t))$ be a solution to~\eqref{eq:constrained_joint} with initial conditions $(y_0, \delta_0)$. The total disagreement energy $\Psi(y, \delta) = \frac{1}{2}\|\delta x\|^2$ is nonincreasing along trajectories. Moreover, the Frobenius norm $\|\delta\|_F$ is nonincreasing if and only if every edge is Type~S (equivalently, $E_A = \varnothing$).

If the forward orbit $\{(y(t), \delta(t)) : t \geq 0\}$ is precompact, then the trajectory converges to the set of equilibria.
\end{theorem}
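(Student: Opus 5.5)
Three things need proof: monotonicity of $\Psi$, the characterization of when $\|\delta\|_F$ is nonincreasing, and convergence to equilibria under precompactness.

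\emph{Monotonicity of $\Psi$.} We recognize \eqref{eq:constrained_joint} as a projected gradient flow. The gradients of $\Psi$ are $\nabla_x\Psi = \delta^T\delta x$ and $\nabla_\delta \Psi = \delta x x^T$ (Frobenius). Since $x=\tilde u+\tilde y$ with $\tilde u$ constant, the chain rule gives
\begin{align*}
\frac{d\Psi}{dt} = \Bigl\langle P_{\mathcal{Q}}\delta^T\delta x, \frac{dy}{dt}\Bigr\rangle + \Bigl\langle \delta x x^T, \frac{d\delta}{dt}\Bigr\rangle_F = -\alpha\|P_{\mathcal{Q}}\delta^T\delta x\|^2 - \beta\|\Pi_{\mathcal{M}}(\delta x x^T)\|_F^2 \le 0 .
\end{align*}
The second equality uses that $\Pi_{\mathcal{M}}$ and $P_{\mathcal{Q}}$ are orthogonal projections, hence self-adjoint and idempotent.

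\emph{The Frobenius norm.} We compute $\frac{d}{dt}\tfrac12\|\delta\|_F^2 = -\beta\langle \delta, \Pi_{\mathcal{M}}(\delta x x^T)\rangle_F = -\beta\langle \Pi_{\mathcal{M}}\delta, \delta x x^T\rangle_F$. Write $\delta_{\mathcal{I}} = \Pi_{\mathcal{M}}\delta$ for the coboundary built only from the adapting maps. Then $\langle \delta_{\mathcal{I}}, \delta x x^T\rangle_F = \langle \delta_{\mathcal{I}}x, \delta x\rangle = \sum_e \langle (\delta_{\mathcal{I}}x)_e, (\delta x)_e\rangle$.

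On a Type S edge with both incidences adapting, $(\delta_{\mathcal{I}}x)_e=(\delta x)_e$, so the edge contributes $\|(\delta x)_e\|^2\ge 0$. On a Type S edge with both incidences frozen, the contribution is $0$. Hence if $E_A=\varnothing$, then $\frac{d}{dt}\|\delta\|_F^2 = -2\beta\sum_{e\in E_S\cap E'}\|(\delta x)_e\|^2\le 0$.

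For the converse, suppose $e=\{u,v\}$ is Type A with only $(u,e)$ adapting. Its contribution is $\langle \sigma_u\delta_{u\unlhd e}x_u,\ \sigma_u\delta_{u\unlhd e}x_u+\sigma_v\delta_{v\unlhd e}x_v\rangle$. Choose a state in which all other edges contribute nothing: for example, set the adapting maps on the other edges to zero, or take $x$ vanishing on the other free vertices when possible. Then choose the frozen term $\sigma_v\delta_{v\unlhd e}x_v=-2\sigma_u\delta_{u\unlhd e}x_u\neq 0$. This makes the contribution $-\|\delta_{u\unlhd e}x_u\|^2<0$, so $\|\delta\|_F$ increases at that instant. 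We interpret ``nonincreasing'' as holding for all initial data, which makes the ``only if'' direction a statement about the existence of a bad initial condition.

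The main obstacle is arranging the remaining edges' contributions to vanish. Some care is needed because $x_u$ may be partly stubborn. We only require $x_u\neq0$ and $\delta_{u\unlhd e}x_u\neq 0$, which we can choose freely since $\delta_0$ is arbitrary.

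\emph{Convergence under precompactness.} We apply LaSalle's invariance principle with $\Psi$ as a Lyapunov function. The vector field is polynomial, hence locally Lipschitz, and the orbit is precompact, so the $\omega$-limit set $\Omega$ is nonempty, compact, connected and invariant. Since $\Psi$ is bounded below and nonincreasing, it converges to some limit $\Psi^*$, and $\Psi\equiv\Psi^*$ on $\Omega$. Invariance then forces $\dot\Psi=0$ on $\Omega$. By the dissipation identity above, this means $P_{\mathcal{Q}}\delta^T\delta x=0$ and $\Pi_{\mathcal{M}}(\delta x x^T)=0$ on $\Omega$, i.e.\ $\Omega$ consists of equilibria. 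Therefore $\operatorname{dist}((y(t),\delta(t)),\Omega)\to0$, which gives convergence to the equilibrium set.
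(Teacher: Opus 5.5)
Your proposal is correct and follows the paper's proof essentially step for step. It uses the same dissipation identity $\frac{d\Psi}{dt} = -\alpha\|P_{\mathcal{Q}}(\delta^T\delta x)\|^2 - \beta\|\Pi_{\mathcal{M}}(\delta x\, x^T)\|_F^2$, the same edge-by-edge sign analysis of $\frac{d}{dt}\|\delta\|_F^2$, and LaSalle's invariance principle for the final claim; your choice $\sigma_v\delta_{v\unlhd e}x_v = -2\sigma_u\delta_{u\unlhd e}x_u$ corresponds to taking $a = 2b$ in the paper's Type~A contribution $2\beta(\langle a,b\rangle - \|b\|^2)$. Setting the adapting maps on all other edges to zero, so that $(\Pi_{\mathcal{M}}\delta\, x)_{e'} = 0$ and those edges contribute nothing, is a cleaner way to make precise the paper's informal ``when this exceeds the nonpositive contributions'' step in the converse.
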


When $E_A = \varnothing$, boundedness of $\|\delta\|_F$ is guaranteed. Boundedness of $y$ follows if $H^0(G; \mathcal{Q}) = 0$; otherwise the harmonic component may drift and precompactness requires regularization. When $E_A \neq \varnothing$, the Frobenius norm $\|\delta\|_F$ may increase along trajectories; Appendix~\ref{app:regularization} provides an explicit example and discusses regularization.

\begin{proof}
We establish the two claims in turn. For energy dissipation, since $x = \tilde{u} + \tilde{y}$ with $\tilde{u}$ fixed, we have $\frac{dx}{dt} = \iota_{\mathcal{Q}}(\frac{dy}{dt})$. The time derivative of $\Psi = \frac{1}{2}\|\delta x\|^2$ is
\[
\frac{d\Psi}{dt} = \langle \delta x, \frac{d\delta}{dt} x \rangle + \langle \delta^T \delta x, \frac{dx}{dt} \rangle.
\]
For the first term, observe that $\langle \delta x, \frac{d\delta}{dt} x \rangle = \langle \frac{d\delta}{dt}, \delta x\, x^T \rangle_F$. Substituting the dynamics~\eqref{eq:constrained_joint}:
\[
\frac{d\Psi}{dt} = -\beta \langle \Pi_{\mathcal{M}}(\delta x\, x^T), \delta x\, x^T \rangle_F - \alpha \langle \delta^T \delta x, \iota_{\mathcal{Q}} P_{\mathcal{Q}}(\delta^T \delta x) \rangle.
\]
Since $\Pi_{\mathcal{M}}$ and $\iota_{\mathcal{Q}} P_{\mathcal{Q}}$ are orthogonal projections, we have $\langle \Pi_{\mathcal{M}}(A), A \rangle_F = \|\Pi_{\mathcal{M}}(A)\|_F^2$ and $\langle v, \iota_{\mathcal{Q}} P_{\mathcal{Q}}(v) \rangle = \|P_{\mathcal{Q}}(v)\|^2$. Therefore
\[
\frac{d\Psi}{dt} = -\beta \|\Pi_{\mathcal{M}}(\delta x\, x^T)\|_F^2 - \alpha \|P_{\mathcal{Q}}(\delta^T \delta x)\|^2 \leq 0.
\]

For the second claim, we analyze the derivative of $\|\delta\|_F^2$ edge by edge. Fix an oriented edge $e = u \to v$, write $\rho_u = \mathcal{F}_{u \unlhd e}$ and $\rho_v = \mathcal{F}_{v \unlhd e}$, and let $a = \rho_v(x_v)$ and $b = \rho_u(x_u)$, so that $(\delta x)_e = a - b$. The gradient of $\Psi$ with respect to the restriction maps at edge $e$ is
\[
\nabla_{\rho_u} \Psi = -(\delta x)_e x_u^T, \qquad \nabla_{\rho_v} \Psi = (\delta x)_e x_v^T.
\]
The dynamics $\frac{d\delta}{dt} = -\beta \Pi_{\mathcal{M}}(\nabla_\delta \Psi)$ give, for adapting incidences,
\[
\frac{d\rho_u}{dt} = \beta(\delta x)_e x_u^T \quad \text{if } (u,e) \in \mathcal{I}, \qquad
\frac{d\rho_v}{dt} = -\beta(\delta x)_e x_v^T \quad \text{if } (v,e) \in \mathcal{I},
\]
with $d\rho_w/dt = 0$ for $(w,e) \notin \mathcal{I}$. The contribution of edge $e$ to $\frac{d}{dt}\|\delta\|_F^2 = 2\langle \delta, \frac{d\delta}{dt} \rangle_F$ is
\[
2\langle \rho_u, \frac{d\rho_u}{dt} \rangle_F + 2\langle \rho_v, \frac{d\rho_v}{dt} \rangle_F 
= 2\beta \mathbf{1}_{(u,e) \in \mathcal{I}} \langle b, a-b \rangle - 2\beta \mathbf{1}_{(v,e) \in \mathcal{I}} \langle a, a-b \rangle.
\]

When both incidences adapt (Type~S), this equals $2\beta \langle b - a, a - b \rangle = -2\beta \|(\delta x)_e\|^2 \leq 0$. When both are frozen (also Type~S), the contribution is zero. When exactly one incidence adapts (Type~A), a short calculation shows that the contribution equals $2\beta(\langle a, b \rangle - \|b\|^2)$ if only $u$ adapts, or $2\beta(\langle a, b \rangle - \|a\|^2)$ if only $v$ adapts. Either quantity can be positive for suitable initial conditions.

Summing over all edges: if $E_A = \varnothing$, every contribution is nonpositive, so $\frac{d}{dt}\|\delta\|_F^2 \leq 0$. Conversely, if $E_A \neq \varnothing$, there exist initial conditions for which some Type~A edge contributes positively, and when this exceeds the nonpositive contributions from Type~S edges, $\frac{d}{dt}\|\delta\|_F^2 > 0$. Convergence to the equilibrium set under precompactness follows from LaSalle's invariance principle.
\end{proof}

\begin{remark}[Equilibrium structure]
At equilibrium, the condition $\frac{d\delta}{dt} = 0$ requires $(\delta x)_e x_v^T = 0$ for each adapting incidence $(v,e) \in \mathcal{I}$. When $x_v \neq 0$, this forces $(\delta x)_e = 0$. When $x_v = 0$, the condition holds vacuously and the restriction map $\mathcal{F}_{v \unlhd e}$ remains fixed, consistent with the zero restriction maps in $\mathcal{H}^x$ at such vertices. Edges governed by Scenario~2 (both incidences frozen) impose no constraint on $(\delta x)_e$ and may retain residual discrepancy. Edges governed by Scenarios~1, 3, or~4 each have at least one adapting incidence, so $(\delta x)_e = 0$ is achieved whenever the adapting endpoint has nonzero opinion.
\end{remark}


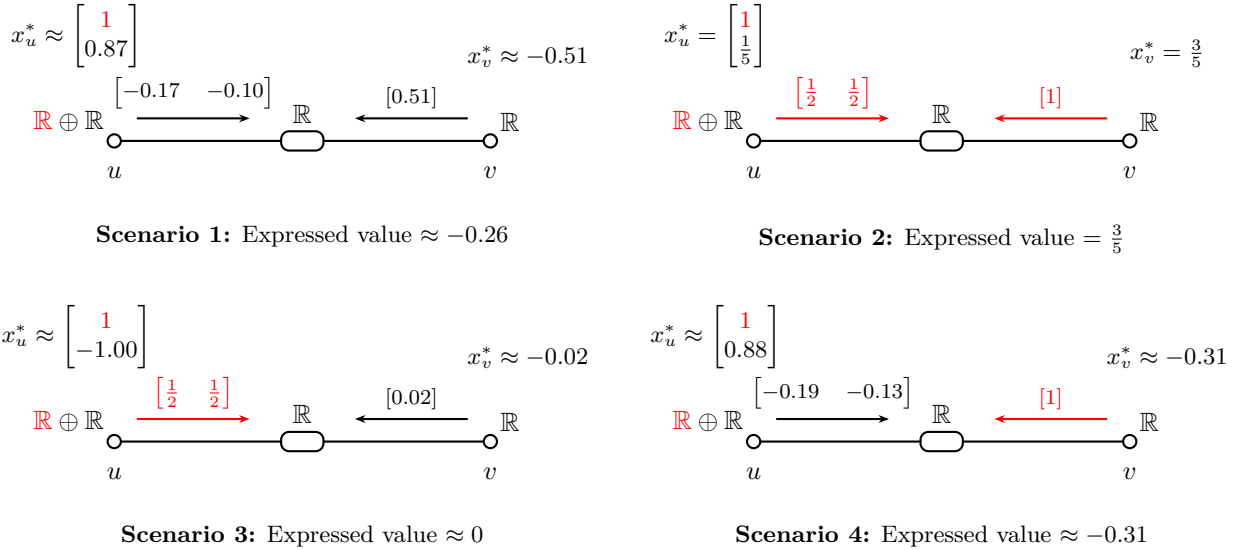
\begin{figure}[ht]
\centering
\begin{tikzpicture}[
    vertex/.style={circle, fill=white, draw=black, thick, minimum size=5pt, inner sep=0pt},
    edge stalk h/.style={rectangle, rounded corners=3pt, fill=white, draw=black, thick, minimum width=16pt, minimum height=4pt},
    arrow/.style={-{Stealth[length=4pt]}, thick},
    every node/.style={font=\small},
    scale=1.0
]


\draw[thick] (0,0) -- (5,0);
\node[edge stalk h] at (2.5,0) {};
\node[vertex] at (0,0) {};
\node[vertex] at (5,0) {};
\node[below=6pt] at (0,0) {$u$};
\node[below=6pt] at (5,0) {$v$};
\node[above left] at (0,0) {${\color{red}\mathbb{R}} \oplus \mathbb{R}$};
\node[above right] at (5,0) {$\mathbb{R}$};
\node[above=3pt] at (2.5,0) {$\mathbb{R}$};
\node[above=24pt, font=\footnotesize] at (-0.5,0) {$x_u^* \approx \begin{bmatrix} {\color{red}1} \\ 0.87 \end{bmatrix}$};
\node[above=24pt, font=\footnotesize] at (5.5,0) {$x_v^* \approx -0.51$};
\draw[arrow] (0.3,0.3) -- (1.8,0.3);
\node[above, font=\scriptsize] at (1.05,0.3) {$\begin{bmatrix} -0.17 & -0.10 \end{bmatrix}$};
\draw[arrow] (4.7,0.3) -- (3.2,0.3);
\node[above, font=\scriptsize] at (3.95,0.3) {$[0.51]$};
\node[below=28pt, font=\footnotesize] at (2.5,0) {\textbf{Scenario 1:} Expressed value $\approx -0.26$};


\begin{scope}[xshift=8.5cm]
\draw[thick] (0,0) -- (5,0);
\node[edge stalk h] at (2.5,0) {};
\node[vertex] at (0,0) {};
\node[vertex] at (5,0) {};
\node[below=6pt] at (0,0) {$u$};
\node[below=6pt] at (5,0) {$v$};
\node[above left] at (0,0) {${\color{red}\mathbb{R}} \oplus \mathbb{R}$};
\node[above right] at (5,0) {$\mathbb{R}$};
\node[above=3pt] at (2.5,0) {$\mathbb{R}$};
\node[above=24pt, font=\footnotesize] at (-0.5,0) {$x_u^* = \begin{bmatrix} {\color{red}1} \\ \frac{1}{5} \end{bmatrix}$};
\node[above=24pt, font=\footnotesize] at (5.5,0) {$x_v^* = \frac{3}{5}$};
\draw[arrow, red] (0.3,0.3) -- (1.8,0.3);
\node[above, font=\scriptsize, red] at (1.05,0.3) {$\begin{bmatrix} \frac{1}{2} & \frac{1}{2} \end{bmatrix}$};
\draw[arrow, red] (4.7,0.3) -- (3.2,0.3);
\node[above, font=\scriptsize, red] at (3.95,0.3) {$[1]$};
\node[below=28pt, font=\footnotesize] at (2.5,0) {\textbf{Scenario 2:} Expressed value $= \frac{3}{5}$};
\end{scope}


\begin{scope}[yshift=-4cm]
\draw[thick] (0,0) -- (5,0);
\node[edge stalk h] at (2.5,0) {};
\node[vertex] at (0,0) {};
\node[vertex] at (5,0) {};
\node[below=6pt] at (0,0) {$u$};
\node[below=6pt] at (5,0) {$v$};
\node[above left] at (0,0) {${\color{red}\mathbb{R}} \oplus \mathbb{R}$};
\node[above right] at (5,0) {$\mathbb{R}$};
\node[above=3pt] at (2.5,0) {$\mathbb{R}$};
\node[above=24pt, font=\footnotesize] at (-0.5,0) {$x_u^* \approx \begin{bmatrix} {\color{red}1} \\ -1.00 \end{bmatrix}$};
\node[above=24pt, font=\footnotesize] at (5.5,0) {$x_v^* \approx -0.02$};
\draw[arrow, red] (0.3,0.3) -- (1.8,0.3);
\node[above, font=\scriptsize, red] at (1.05,0.3) {$\begin{bmatrix} \frac{1}{2} & \frac{1}{2} \end{bmatrix}$};
\draw[arrow] (4.7,0.3) -- (3.2,0.3);
\node[above, font=\scriptsize] at (3.95,0.3) {$[0.02]$};
\node[below=28pt, font=\footnotesize] at (2.5,0) {\textbf{Scenario 3:} Expressed value $\approx 0$};
\end{scope}


\begin{scope}[xshift=8.5cm, yshift=-4cm]
\draw[thick] (0,0) -- (5,0);
\node[edge stalk h] at (2.5,0) {};
\node[vertex] at (0,0) {};
\node[vertex] at (5,0) {};
\node[below=6pt] at (0,0) {$u$};
\node[below=6pt] at (5,0) {$v$};
\node[above left] at (0,0) {${\color{red}\mathbb{R}} \oplus \mathbb{R}$};
\node[above right] at (5,0) {$\mathbb{R}$};
\node[above=3pt] at (2.5,0) {$\mathbb{R}$};
\node[above=24pt, font=\footnotesize] at (-0.5,0) {$x_u^* \approx \begin{bmatrix} {\color{red}1} \\ 0.88 \end{bmatrix}$};
\node[above=24pt, font=\footnotesize] at (5.5,0) {$x_v^* \approx -0.31$};
\draw[arrow] (0.3,0.3) -- (1.8,0.3);
\node[above, font=\scriptsize] at (1.05,0.3) {$\begin{bmatrix} -0.19 & -0.13 \end{bmatrix}$};
\draw[arrow, red] (4.7,0.3) -- (3.2,0.3);
\node[above, font=\scriptsize, red] at (3.95,0.3) {$[1]$};
\node[below=28pt, font=\footnotesize] at (2.5,0) {\textbf{Scenario 4:} Expressed value $\approx -0.31$};
\end{scope}

\end{tikzpicture}
\caption{Limit sheaves for the four scenarios, starting from the symmetric initial condition of Figure~\ref{fig:four-scenarios} where expressed opinions are $+1$ from $u$ and $-1$ from $v$. In \textbf{Scenario 2}, only opinions adjust, reaching the compromise $\frac{3}{5}$. In \textbf{Scenario 3}, agent $v$ silences itself, achieving consensus near zero. In \textbf{Scenarios 1 and 4}, the restriction maps become negative, meaning agents invert how they translate beliefs into expressions. See Appendix~\ref{app:example-calculations} for derivations.}
\label{fig:four-scenarios-limit}
\end{figure}

\subsection{Conservation Laws and Nontrivial Equilibria}\label{ssec:conservation-laws}

A natural concern is whether the system might achieve agreement by degenerate means: all opinions vanishing ($x \to 0$) or all communication ceasing ($\delta \to 0$). Hansen and Ghrist~\cite[Theorem~9.3]{hansen2020} showed that in the unconstrained case a conservation law obstructs such collapse. We extend their analysis to the constrained setting and show that the same conserved quantity governs both opinion persistence and discourse persistence.

For each vertex $v$, define the time-dependent matrix
\[
Q_{vv}(t) = \alpha (L_{\mathcal{F}}(t))_{vv} - \beta x_v(t) x_v(t)^T \in \mathrm{Hom}(\mathcal{F}(v), \mathcal{F}(v)).
\]
This symmetric matrix balances communication strength against opinion magnitude at vertex $v$. The diagonal block $(L_{\mathcal{F}})_{vv}$ aggregates the restriction maps on edges incident to $v$, while the rank-one term $x_v x_v^T$ captures the direction and intensity of $v$'s opinion.

\begin{proposition}[Persistence of Nontrivial Dynamics]\label{prop:persistence}
Consider the constrained joint dynamics~\eqref{eq:constrained_joint} with precompact forward orbit. If $v$ is an interior free vertex and all of its incidences adapt  (formally $v$ has all incident edges in $E_{FF}$ and $(v,e)$ in $\mathcal{I}$), then $Q_{vv}(t)$ is constant in time. Under this conservation condition, a negative eigenvalue of $Q_{vv}(0)$ implies $x_v(t) \not\to 0$, while a positive eigenvalue implies that the restriction maps incident to $v$ cannot all vanish.
\end{proposition}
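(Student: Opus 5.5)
The plan is a direct computation: differentiate both terms of $Q_{vv}(t)$ along the flow~\eqref{eq:constrained_joint} and show that their time derivatives cancel. The two consequences then follow from evaluating the conserved quadratic form on a fixed eigenvector of $Q_{vv}(0)$. Precompactness of the orbit is used only to guarantee that the solution exists for all $t \ge 0$, so that ``constant in time'' and the limits $t \to \infty$ make sense.

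\textbf{Derivative of the Laplacian block.} For each edge $e$ incident to $v$, write $\rho_{ve} = \mathcal{F}_{v \unlhd e}$ and $\sigma_{v,e} = \pm 1$ for the incidence sign. Then
\[
(L_{\mathcal{F}})_{vv} = \sum_{e \ni v} \rho_{ve}^T \rho_{ve},
\qquad
(\delta^T \delta x)_v = \sum_{e \ni v} \sigma_{v,e}\, \rho_{ve}^T (\delta x)_e =: g_v .
\]
Since every incidence at $v$ adapts, the gradient computation in the proof of Theorem~\ref{thm:constrained_convergence} gives $\frac{d\rho_{ve}}{dt} = -\beta\, \sigma_{v,e} (\delta x)_e x_v^T$ for every $e \ni v$, with no projection loss. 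Applying the product rule and summing over $e$ yields
\[
\frac{d}{dt}(L_{\mathcal{F}})_{vv} = -\beta \bigl( g_v x_v^T + x_v g_v^T \bigr).
\]
Only the maps at $v$ enter $(L_{\mathcal{F}})_{vv}$, so the adaptation status of the other endpoints is irrelevant.

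\textbf{Derivative of the opinion term.} Because $v$ is free, $S_v = \{0\}$ and $T_v = \mathcal{F}(v)$. The projection $P_{\mathcal{Q}}$ therefore acts as the identity on the $v$-component, giving $\frac{dx_v}{dt} = -\alpha g_v$. Hence
\[
\frac{d}{dt}\bigl(x_v x_v^T\bigr) = -\alpha \bigl( g_v x_v^T + x_v g_v^T \bigr).
\]
Multiplying the first derivative by $\alpha$ and the second by $\beta$ shows that $\frac{d}{dt} Q_{vv} = 0$.

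The only point requiring care is the bookkeeping that justifies these two identities, and this is where the hypotheses are used. All incidences at $v$ must adapt, so that $\Pi_{\mathcal{M}}$ does not kill any term of $g_v$. The vertex $v$ must be entirely free, so that $P_{\mathcal{Q}}$ does not truncate $g_v$. The sign conventions must also match those used in Theorem~\ref{thm:constrained_convergence}.

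\textbf{Consequences.} Suppose $Q_{vv}(0)$ has an eigenvalue $\lambda < 0$ with unit eigenvector $w$. Constancy of $Q_{vv}$ gives, for all $t$,
\[
\alpha \sum_{e \ni v} \|\rho_{ve}(t)\, w\|^2 - \beta \langle x_v(t), w \rangle^2 = \lambda .
\]
Since the first term is nonnegative, $\langle x_v(t), w\rangle^2 \ge |\lambda|/\beta$, so $x_v(t)$ stays bounded away from $0$.

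If instead $Q_{vv}(0)$ has an eigenvalue $\lambda > 0$ with unit eigenvector $w$, the same identity gives $\alpha \sum_{e \ni v} \|\rho_{ve}(t)\, w\|^2 \ge \lambda$. Hence the restriction maps at $v$ cannot all tend to zero, and indeed their Frobenius norms are uniformly bounded below.
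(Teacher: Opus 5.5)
Your proof is correct. The conservation step is the same as the paper's. The paper only invokes the Hansen--Ghrist cancellation, while you carry it out explicitly. Your bookkeeping (nothing lost under $\Pi_{\mathcal{M}}$ because every incidence at $v$ adapts, nothing lost under $P_{\mathcal{Q}}$ because $T_v = \mathcal{F}(v)$) also shows that the requirement that the neighbours of $v$ be free, i.e.\ that the incident edges lie in $E_{FF}$, is never used.

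Where you genuinely diverge is in the two consequences. The paper argues by contradiction: it assumes $x_v(t) \to 0$ (resp.\ $\delta(t) \to 0$) and uses precompactness to extract a subsequence along which the other variable converges. It then notes that the conserved matrix would have to equal a positive (resp.\ negative) semidefinite limit, contradicting the sign of the eigenvalue. You instead evaluate the conserved quadratic form on a fixed unit eigenvector $w$ of $Q_{vv}(0)$. This gives explicit uniform-in-time bounds, $\|x_v(t)\|^2 \ge |\lambda|/\beta$ and $\sum_{e \ni v} \|\mathcal{F}_{v \unlhd e}(t)\|_F^2 \ge \lambda/\alpha$. Your route is more elementary and strictly stronger: it rules out collapse along subsequences, not just convergence to zero, and it shows precompactness is needed only to guarantee the solution exists for all $t \ge 0$. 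The paper's route is shorter on the page but yields only the qualitative non-convergence statement.

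One small precision: in your last sentence, it is the \emph{sum} of the squared Frobenius norms of the maps at $v$ that is bounded below, not each norm individually.
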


\begin{proof}
The conservation follows from the computation in~\cite[Theorem~9.3]{hansen2020}. When all restriction maps originating from $v$ evolve according to the gradient flow, the contributions to $d{Q}_{vv}/dt$ from opinion evolution and from map evolution cancel exactly. Free vertices with all incident edges in $E_{FF}$ and all incidences adapting satisfy this condition.

For opinion persistence, suppose toward contradiction that $x_v(t) \to 0$. By precompactness, along a convergent subsequence we have $\delta(t_{n_k}) \to \delta_\infty$. In the limit, $Q_{vv}(0) = \alpha (L_{\mathcal{F}}(\infty))_{vv}$, which is positive semidefinite. This contradicts the existence of a negative eigenvalue.

For discourse persistence, suppose $\delta(t) \to 0$. Again by precompactness, $x(t_{n_k}) \to x_\infty$ along a subsequence. In the limit, $Q_{vv}(0) = -\beta (x_\infty)_v (x_\infty)_v^T$, which is negative semidefinite. This contradicts the existence of a positive eigenvalue.
\end{proof}

The sign structure of $Q_{vv}(0)$ determines which forms of collapse are obstructed. Since $(L_{\mathcal{F}})_{vv}$ is positive semidefinite and $x_v x_v^T$ is rank one with eigenvalue $\|x_v\|^2$ in the direction of $x_v$, the matrix $Q_{vv}(0)$ is indefinite precisely when the Laplacian term dominates in directions orthogonal to $x_v$ while the opinion term dominates along $x_v$. In terms of the social dynamics, this means the agent's strongest convictions concern topics that are largely absent from their communication with neighbors. Colleagues who engage professionally while avoiding contested political views, or families that discuss everything except the one issue that actually matters, exhibit precisely this structure. The unexpressed convictions escape the pressure toward consensus, while the active communication channels remain engaged. The conservation law captures this stability: the system cannot resolve discord by either silencing communication or eliminating opinions. Equilibrium, therefore, represents genuine accommodation rather than disengagement.

\begin{remark}[Additional persistence mechanisms]
Beyond the conservation law, opinions may persist through other mechanisms. If $v \in U$ with $u_v \neq 0$, then $x_v(t) = u_v + y_v(t)$ remains bounded away from zero regardless of the dynamics. Similarly, when the restriction maps coupling stubborn to free vertices are frozen and the forcing term $L_{QS} u$ is nonzero, the free opinions cannot all vanish at equilibrium.
\end{remark}


\section{Timescale Separation and Stagnation Bounds}\label{sec:timescale}

The joint dynamics of Section~\ref{sec:joint} treat opinion and structure updates as occurring at comparable rates. In practice, these rates often differ substantially: private beliefs may shift slowly while public rhetoric adapts rapidly, or vice versa. This section analyzes what happens when the opinion update rate $\alpha$ and the structural adaptation rate $\beta$ are far apart.

The analysis mirrors classical singular perturbation theory. When $\beta \gg \alpha$, expressions act as the fast subsystem and equilibrate before opinions have time to change significantly. When $\alpha \gg \beta$, the roles reverse: opinions adapt rapidly while the communication structure remains nearly frozen. In each regime, we derive bounds on how much the slow variable can drift while the fast variable relaxes. Both bounds follow from a common structure: the fast dynamics drive exponential decay of the discrepancy energy, and integration of the slow velocity over this decay period yields the displacement bound.

\subsection{Fast Opinions}\label{ssec:fast-opinions}

In the regime $\alpha \gg \beta$, agents adjust their private opinions much faster than they alter their communication strategies. This describes a society of flexible individuals and rigid institutions: people readily compromise to fit the existing social structure, thereby preserving the network's communication patterns.

To quantify how much the structure can drift while opinions equilibrate, we introduce a measure of how efficiently opinion dynamics reduce disagreement. For a trajectory $(x(t), \delta(t))$ on a time interval $[0, T]$, define the \emph{effective spectral gap} (a trajectory-dependent dissipation ratio) of projected opinion diffusion:
\begin{equation}\label{eq:lambda_eff_def}
\lambda_{\mathrm{eff}} := \inf_{t \in [0,T]} \frac{\|P_{\mathcal{Q}}(L_{\mathcal{F}}(\delta(t)) x(t))\|^2}{\|\delta(t) x(t)\|^2}.
\end{equation}
This ratio compares the squared norm of the projected Laplacian (the opinion velocity, up to the rate $\alpha$) to the disagreement energy. We assume this infimum is strictly positive, i.e., the ratio is bounded away from zero whenever $\delta(t)x(t) \neq 0$. If disagreement vanishes at some time, the system is at equilibrium and the bounds below hold with zero displacement.

When $U = \varnothing$ (no stubborn vertices), $P_{\mathcal{Q}} = I$ and $\lambda_{\mathrm{eff}}$ is bounded below by the smallest positive eigenvalue of $L_{\mathcal{F}}(\delta)$, the sheaf's algebraic connectivity. When stubborn vertices are present, $\lambda_{\mathrm{eff}}$ depends on the interaction between the stubborn constraints and the sheaf structure.

\begin{proposition}[Structural Stagnation Bound]\label{prop:structural_stagnation}
Consider the constrained joint dynamics~\eqref{eq:constrained_joint} with initial condition $(y_0, \delta_0)$. Assume that on $[0, T]$:
\begin{enumerate}
\item opinions are bounded: $\sup_{t \in [0,T]}\|x(t)\| \le B_x$ for some $B_x > 0$,
\item the effective spectral gap~\eqref{eq:lambda_eff_def} is positive: $\lambda_{\mathrm{eff}} > 0$.
\end{enumerate}
Then the structural displacement satisfies
\begin{equation}\label{eq:structural_bound}
\|\delta(T) - \delta_0\|_F \le \frac{\beta B_x \|\delta_0 x_0\|}{\alpha \lambda_{\mathrm{eff}}} \bigl(1 - e^{-\alpha \lambda_{\mathrm{eff}} T}\bigr) \le \frac{\beta B_x \|\delta_0 x_0\|}{\alpha \lambda_{\mathrm{eff}}}.
\end{equation}
\end{proposition}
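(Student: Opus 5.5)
The plan is to bound the structural displacement by integrating the structure velocity in time, and to control that velocity through exponential decay of the disagreement energy $\Psi = \tfrac12\|\delta x\|^2$, using the dissipation identity from the proof of Theorem~\ref{thm:constrained_convergence}.

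\textbf{Step 1 (velocity bound).} Since $\Pi_{\mathcal{M}}$ is an orthogonal projection, it is nonexpansive in the Frobenius norm. Together with $\|a\,x^T\|_F = \|a\|\,\|x\|$ for rank-one matrices, this gives
\[
\Bigl\|\frac{d\delta}{dt}\Bigr\|_F = \beta\,\|\Pi_{\mathcal{M}}(\delta x\, x^T)\|_F \le \beta\,\|\delta x\|\,\|x\| \le \beta B_x\,\|\delta(t)x(t)\|.
\]
By the triangle inequality for integrals,
\[
\|\delta(T)-\delta_0\|_F \le \beta B_x \int_0^T \|\delta(t)x(t)\|\,dt.
\]

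\textbf{Step 2 (energy decay).} The proof of Theorem~\ref{thm:constrained_convergence} gives
\[
\frac{d\Psi}{dt} = -\beta\|\Pi_{\mathcal{M}}(\delta x\,x^T)\|_F^2 - \alpha\|P_{\mathcal{Q}}(\delta^T\delta x)\|^2 .
\]
Drop the first (nonpositive) term. Note that $\delta^T\delta = L_{\mathcal{F}}(\delta)$, so the definition~\eqref{eq:lambda_eff_def} yields $\|P_{\mathcal{Q}}(L_{\mathcal{F}}x)\|^2 \ge \lambda_{\mathrm{eff}}\|\delta x\|^2 = 2\lambda_{\mathrm{eff}}\Psi$. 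Hence
\[
\frac{d\Psi}{dt} \le -2\alpha\lambda_{\mathrm{eff}}\Psi .
\]
By Gr\"onwall, $\Psi(t)\le \Psi(0)e^{-2\alpha\lambda_{\mathrm{eff}}t}$, and taking square roots,
\[
\|\delta(t)x(t)\| \le \|\delta_0x_0\|\,e^{-\alpha\lambda_{\mathrm{eff}}t}.
\]
If $\delta x$ vanishes at some time $t_0$, then $\Psi\equiv 0$ from $t_0$ on, since $\Psi$ is nonincreasing and nonnegative. The structure velocity is then zero and the inequality holds trivially there, so the ratio in~\eqref{eq:lambda_eff_def} only needs to be used where $\delta x\neq 0$.

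\textbf{Step 3 (integrate).} Substituting Step 2 into Step 1 gives
\[
\|\delta(T)-\delta_0\|_F \le \beta B_x\|\delta_0x_0\|\int_0^T e^{-\alpha\lambda_{\mathrm{eff}}t}\,dt = \frac{\beta B_x\|\delta_0x_0\|}{\alpha\lambda_{\mathrm{eff}}}\bigl(1-e^{-\alpha\lambda_{\mathrm{eff}}T}\bigr),
\]
and the second inequality in~\eqref{eq:structural_bound} is immediate.

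\textbf{Main obstacle.} The delicate point is Step 2. Using $\lambda_{\mathrm{eff}}$ as a pointwise lower bound requires care at times where $\delta x=0$, which the remark above handles. It also relies on the sign of the $\beta$-term, which can only help. Everything else is a routine estimate.
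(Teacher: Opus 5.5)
Your proposal is correct and follows essentially the same route as the paper's proof. The paper also drops the nonpositive $\beta$-term and uses $\lambda_{\mathrm{eff}}$ to get $\frac{d\Psi}{dt}\le -2\alpha\lambda_{\mathrm{eff}}\Psi$, applies Gr\"onwall, and then integrates the structure velocity bounded by $\beta B_x\|\delta x\|$ via nonexpansiveness of $\Pi_{\mathcal{M}}$ and $\|\delta x\,x^T\|_F=\|\delta x\|\,\|x\|$. Your extra care at times where $\delta x=0$ is harmless but not really needed, since the inequality $\|P_{\mathcal{Q}}(L_{\mathcal{F}}x)\|^2\ge\lambda_{\mathrm{eff}}\|\delta x\|^2$ holds trivially there.
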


The bound has a natural interpretation. The ratio $\beta/\alpha$ controls how much structural drift is allowed. The effective spectral gap $\lambda_{\mathrm{eff}}$ determines the rate of this decay: larger gaps mean faster equilibration and less time for the structure to drift. The initial disagreement $\|\delta_0 x_0\|$ sets the scale of the problem.

\begin{proof}
The strategy is to establish exponential energy decay from the spectral gap, then integrate the structural velocity over the decay period.

The energy derivative satisfies $\frac{d\Psi}{dt} = \langle \delta x, \delta \frac{dx}{dt} \rangle + \langle \delta x, \frac{d\delta}{dt} x \rangle$. For the opinion contribution, using $\frac{dx}{dt} = -\alpha \iota_{\mathcal{Q}} P_{\mathcal{Q}}(L_{\mathcal{F}} x)$:
\[
\langle \delta x, \delta \frac{dx}{dt} \rangle = -\alpha \langle L_{\mathcal{F}} x, \iota_{\mathcal{Q}} P_{\mathcal{Q}}(L_{\mathcal{F}} x) \rangle = -\alpha \|P_{\mathcal{Q}}(L_{\mathcal{F}} x)\|^2,
\]
where the second equality uses $\langle z, \iota P z \rangle = \|Pz\|^2$ for any orthogonal projection $P$ with inclusion $\iota$. The structure contribution satisfies $\langle \delta x, \frac{d\delta}{dt} x \rangle \le 0$ by the gradient flow structure. Combining these with the definition of $\lambda_{\mathrm{eff}}$:
\[
\frac{d\Psi}{dt} \le -\alpha \|P_{\mathcal{Q}}(L_{\mathcal{F}} x)\|^2 \le -\alpha \lambda_{\mathrm{eff}} \|\delta x\|^2 = -2\alpha \lambda_{\mathrm{eff}} \Psi.
\]
Gr\"onwall's inequality yields $\Psi(t) \le \Psi(0) e^{-2\alpha \lambda_{\mathrm{eff}} t}$, hence $\|\delta(t) x(t)\| = \sqrt{2\Psi(t)} \le \|\delta_0 x_0\| e^{-\alpha \lambda_{\mathrm{eff}} t}$.

For the structural displacement, note that orthogonal projection does not increase the Frobenius norm, and for an outer product $\|uv^T\|_F = \|u\|\|v\|$. Thus the velocity satisfies 
\[
\|\frac{d\delta}{dt}\|_F = \beta\|\Pi_{\mathcal{M}}(\delta x\, x^T)\|_F \le \beta\|\delta x\| \|x\| \le \beta B_x \|\delta x\|.
\]
Integrating:
\begin{align*}
\|\delta(T) - \delta_0\|_F &\le \beta \int_0^T B_x \|\delta(t) x(t)\| \, dt \le \beta B_x \|\delta_0 x_0\| \int_0^T e^{-\alpha \lambda_{\mathrm{eff}} t} \, dt \\
&= \frac{\beta B_x \|\delta_0 x_0\|}{\alpha \lambda_{\mathrm{eff}}} \bigl(1 - e^{-\alpha \lambda_{\mathrm{eff}} T}\bigr). \qedhere
\end{align*}
\end{proof}

\begin{remark}[Stopping-time interpretation]\label{rmk:stopping_time}
Under the hypotheses of Proposition~\ref{prop:structural_stagnation}, suppose that $T$ is the first time such that $\|P_{\mathcal{Q}}(L_{\mathcal{F}}(\delta(t)) x(t))\| = \Delta$ for some threshold $\Delta>0$, and assume further that the infimum in~\eqref{eq:lambda_eff_def} is attained at $T$ (this is, for instance, the case if the map $t\mapsto \|P_{\mathcal{Q}}(L_{\mathcal{F}}(\delta(t))x(t))\|^2 / \|\delta(t)x(t)\|^2$ is monotone decreasing on $[0,T]$). Then the instantaneous ratio at time $T$,
\[
\frac{\|P_{\mathcal{Q}}(L_{\mathcal{F}}(\delta(T)) x(T))\|^2}{\|\delta(T) x(T)\|^2},
\]
equals $\lambda_{\mathrm{eff}}$, and hence the exponential factor can be rewritten in terms of $\Delta$ to yield
\[
\|\delta(T) - \delta_0\|_F \le \frac{\beta B_x}{\alpha \lambda_{\mathrm{eff}}} \left( \|\delta_0 x_0\| - \frac{\Delta}{\sqrt{\lambda_{\mathrm{eff}}}} \right).
\]
This sharper form therefore requires the additional attainment hypothesis.
\end{remark}

\subsection{Fast Structure}\label{ssec:fast-structure}

When $\beta \gg \alpha$, communication structures adapt rapidly to minimize discord while private opinions barely change. Agents mask disagreement through diplomatic expression rather than resolving it through genuine opinion change. The result is a kind of surface harmony: public discourse appears harmonious, but private convictions remain largely unchanged.

As before, we introduce a measure of how efficiently structure dynamics reduce disagreement. For a trajectory $(x(t), \delta(t))$ on $[0, T]$, define the \emph{effective spectral gap} (again a trajectory-dependent dissipation ratio) of projected structure diffusion:
\begin{equation}\label{eq:mu_eff_def}
\mu_{\mathrm{eff}} := \inf_{t \in [0,T]} \frac{\|\Pi_{\mathcal{M}}(\delta(t) x(t) x(t)^T)\|_F^2}{\|\delta(t) x(t)\|^2} = \inf_{t \in [0,T]} \frac{\sum_{(v,e) \in \mathcal{I}} \|(\delta x)_e\|^2 \|x_v\|^2}{\|\delta x\|^2}.
\end{equation}
This ratio compares the squared Frobenius norm of the projected structure velocity to the disagreement energy. The second equality shows that $\mu_{\mathrm{eff}}$ is a disagreement-weighted average of squared opinion norms at adapting vertices. We assume this infimum is strictly positive, i.e., the ratio is bounded away from zero whenever $\delta(t)x(t) \neq 0$. If disagreement vanishes at some time, the system is at equilibrium and the bounds below hold with zero displacement.

When all incidences adapt and opinions are approximately uniform ($\|x_v\| \approx c$ for all $v$), then $\mu_{\mathrm{eff}} \approx 2c^2$ since each edge has two incidences. Conversely, $\mu_{\mathrm{eff}}$ approaches zero when disagreement occurs primarily on edges where adapting endpoints have weak opinions, or on edges with no adapting incidences at all.

\begin{proposition}[Opinion Stagnation Bound]\label{prop:opinion_stagnation}
Consider the constrained joint dynamics~\eqref{eq:constrained_joint} with initial condition $(y_0, \delta_0)$. Assume that on $[0, T]$:
\begin{enumerate}
\item restriction maps are bounded: $\sup_{t \in [0,T]}\|\delta(t)\|_F \le B_\delta$ for some $B_\delta > 0$,
\item the effective spectral gap~\eqref{eq:mu_eff_def} is positive: $\mu_{\mathrm{eff}} > 0$.
\end{enumerate}
Then the opinion displacement satisfies
\begin{equation}\label{eq:opinion_bound}
\|x(T) - x(0)\| \le \frac{\alpha B_\delta \|\delta_0 x_0\|}{\beta \mu_{\mathrm{eff}}} \bigl(1 - e^{-\beta \mu_{\mathrm{eff}} T}\bigr) \le \frac{\alpha B_\delta \|\delta_0 x_0\|}{\beta \mu_{\mathrm{eff}}}.
\end{equation}
\end{proposition}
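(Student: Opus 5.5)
The plan mirrors the proof of Proposition~\ref{prop:structural_stagnation}, with the roles of the fast and slow variables swapped. First we obtain exponential decay of the disagreement energy from the structural dissipation alone. Then we integrate the opinion velocity against that decay.

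\emph{Step 1: energy decay.} From the proof of Theorem~\ref{thm:constrained_convergence} we have the exact identity
\[
\frac{d\Psi}{dt} = -\beta \|\Pi_{\mathcal{M}}(\delta x\, x^T)\|_F^2 - \alpha \|P_{\mathcal{Q}}(\delta^T \delta x)\|^2 .
\]
We drop the nonpositive opinion term. The definition~\eqref{eq:mu_eff_def} of $\mu_{\mathrm{eff}}$ bounds the remaining term, so $\frac{d\Psi}{dt} \le -\beta\mu_{\mathrm{eff}}\|\delta x\|^2 = -2\beta\mu_{\mathrm{eff}}\Psi$ at every time where $\delta x \neq 0$. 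At times where $\delta x = 0$, both sides vanish and the inequality holds trivially. Gr\"onwall's inequality then gives
\[
\|\delta(t)x(t)\| \le \|\delta_0 x_0\|\, e^{-\beta\mu_{\mathrm{eff}} t}.
\]

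For the edge-sum form in \eqref{eq:mu_eff_def}, we read off $\Pi_{\mathcal{M}}(\delta x\,x^T)$ incidence by incidence, using the formula $\pm(\delta x)_e x_v^T$ from the proof of Theorem~\ref{thm:constrained_convergence}. Each such block has squared Frobenius norm $\|(\delta x)_e\|^2\|x_v\|^2$. Only the first expression in \eqref{eq:mu_eff_def} is actually needed here.

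\emph{Step 2: velocity bound.} Since $\tilde u$ is fixed, $dx/dt = \iota_{\mathcal{Q}}(dy/dt)$, and the inclusion is an isometry. Orthogonal projection is nonexpansive, so
\[
\|dx/dt\| = \alpha\|P_{\mathcal{Q}}(\delta^T\delta x)\| \le \alpha\|\delta^T\|_{\mathrm{op}}\|\delta x\| \le \alpha\|\delta\|_F\|\delta x\| \le \alpha B_\delta\|\delta x\|.
\]
This uses the fact that the operator norm is bounded by the Frobenius norm. We also need that the Frobenius norm of the assembled coboundary matrix, with sign conventions included, equals the Frobenius norm of the collection of restriction maps. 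This holds because the signs $\pm1$ do not change entry magnitudes.

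\emph{Step 3: integration.} We integrate the velocity bound over $[0,T]$:
\[
\|x(T)-x(0)\| \le \int_0^T\|dx/dt\|\,dt \le \alpha B_\delta\|\delta_0x_0\|\int_0^T e^{-\beta\mu_{\mathrm{eff}}t}\,dt.
\]
Evaluating the integral gives exactly \eqref{eq:opinion_bound}.

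\emph{Main obstacle.} The only genuinely delicate point is Step~1. Unlike Proposition~\ref{prop:structural_stagnation}, we must ensure that the opinion term cannot spoil the decay. It cannot, because by the exact identity it enters with a nonpositive sign. The remaining work is bookkeeping: identifying the norm of $\delta$ as an operator with its Frobenius norm as a collection of maps.
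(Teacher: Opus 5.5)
Your proposal is correct and follows essentially the same route as the paper: drop the nonpositive opinion term in $d\Psi/dt$, use $\mu_{\mathrm{eff}}$ and Gr\"onwall to get $\|\delta(t)x(t)\| \le \|\delta_0 x_0\| e^{-\beta\mu_{\mathrm{eff}}t}$, bound $\|dy/dt\| \le \alpha B_\delta\|\delta x\|$ via $\|\delta^T\|_{\mathrm{op}} \le \|\delta\|_F$, and integrate. Your extra remarks are small bookkeeping points that the paper leaves implicit: the handling of times where $\delta x = 0$, and the fact that the Frobenius norm of the signed coboundary matrix equals that of the collection of restriction maps.
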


The bound quantifies how fast is too fast: condition~\eqref{eq:opinion_bound} shows that the rate ratio $\alpha/\beta$ controls how much genuine opinion change occurs while the structure equilibrates.

Hypothesis (1) holds automatically with $B_\delta = \|\delta_0\|_F$ when $E_A = \varnothing$ (no Type A edges), since the Frobenius norm is non-increasing under such dynamics by Theorem~\ref{thm:constrained_convergence}. When Type A edges are present, boundedness must be verified separately or ensured via regularization (Appendix~\ref{app:regularization}).

\begin{proof}
The strategy mirrors the previous proposition: establish exponential energy decay, then integrate the opinion velocity.

For the structure contribution to the energy derivative, the dynamics give $\frac{d}{dt}(\delta_{v \unlhd e}) = -\beta (\delta x)_e x_v^T$ for each adapting incidence $(v, e) \in \mathcal{I}$. A direct computation shows:
\[
\langle \delta x, \frac{d\delta}{dt} x \rangle = -\beta \sum_{(v,e) \in \mathcal{I}} \|(\delta x)_e\|^2 \|x_v\|^2 = -\beta \|\Pi_{\mathcal{M}}(\delta x\, x^T)\|_F^2.
\]
The opinion contribution satisfies $\langle \delta x, \delta \frac{dx}{dt} \rangle \le 0$. Combining with the definition of $\mu_{\mathrm{eff}}$:
\[
\frac{d\Psi}{dt} \le -\beta \|\Pi_{\mathcal{M}}(\delta x\, x^T)\|_F^2 \le -\beta \mu_{\mathrm{eff}} \|\delta x\|^2 = -2\beta \mu_{\mathrm{eff}} \Psi.
\]
Gr\"onwall's inequality yields $\Psi(t) \le \Psi(0) e^{-2\beta \mu_{\mathrm{eff}} t}$, hence $\|\delta(t) x(t)\| \le \|\delta_0 x_0\| e^{-\beta \mu_{\mathrm{eff}} t}$.

For the opinion displacement, note that $\|P_{\mathcal{Q}}(L_{\mathcal{F}} x)\| \le \|L_{\mathcal{F}} x\|$ since orthogonal projection does not increase the norm, and $L_{\mathcal{F}} = \delta^T \delta$ gives $\|L_{\mathcal{F}} x\| \le \|\delta^T\|_{\mathrm{op}} \|\delta x\| \le \|\delta\|_F \|\delta x\|$. Thus, the velocity satisfies:
\[
\|\frac{dy}{dt}\| = \alpha \|P_{\mathcal{Q}}(L_{\mathcal{F}} x)\| \le \alpha \|\delta\|_F \|\delta x\| \le \alpha B_\delta \|\delta x\|.
\]
Since $\|x(T) - x(0)\| = \|y(T) - y_0\|$:
\begin{align*}
\|x(T) - x(0)\| &\le \alpha B_\delta \int_0^T \|\delta(t) x(t)\| \, dt \le \frac{\alpha B_\delta \|\delta_0 x_0\|}{\beta \mu_{\mathrm{eff}}} \bigl(1 - e^{-\beta \mu_{\mathrm{eff}} T}\bigr). \qedhere
\end{align*}
\end{proof}

As with the structural stagnation bound, a sharper $\Delta$-dependent form can be obtained under additional monotonicity and attainment hypotheses; see Remark~\ref{rmk:stopping_time}.

When $\mu_{\mathrm{eff}} = 0$, the bound becomes ineffective. This occurs when disagreement concentrates on edges where adapting endpoints have vanishing opinions, or on edges with no adapting incidences. In such cases, regularization provides an unconditional bound; see Appendix~\ref{app:regularization}.

\begin{remark}[Unified perspective]
Propositions~\ref{prop:structural_stagnation} and~\ref{prop:opinion_stagnation} share a common structure. In both cases, projected diffusion on the fast variable drives exponential energy decay, and integration of the slow velocity yields the displacement bound. The effective spectral gaps $\lambda_{\mathrm{eff}}$ and $\mu_{\mathrm{eff}}$ play symmetric roles: the former measures the efficiency of projected diffusion on the discourse sheaf $\mathcal{F}$, while the latter measures the efficiency of projected diffusion on the sheaf of structures $\mathcal{H}^x$. Both bounds take the form
\[
\text{(slow displacement)} \le \frac{\text{(slow rate)} \times \text{(slow variable bound)} \times \|\delta_0 x_0\|}{\text{(fast rate)} \times \text{(effective gap)}}.
\]
\end{remark}

\begin{corollary}[Regime characterization]\label{cor:regime_thresholds}
Fix a tolerance $\varepsilon > 0$ and define the threshold ratios
\begin{equation}\label{eq:threshold_ratios}
\rho_- := \frac{\varepsilon \lambda_{\mathrm{eff}}}{B_x \|\delta_0 x_0\|}, \qquad \rho_+ := \frac{B_\delta \|\delta_0 x_0\|}{\varepsilon \mu_{\mathrm{eff}}}.
\end{equation}
Under the hypotheses of Propositions~\ref{prop:structural_stagnation} and~\ref{prop:opinion_stagnation}:
if $\beta/\alpha \le \rho_-$, then structural displacement is bounded by $\varepsilon$ (belief resolution);
if $\beta/\alpha \ge \rho_+$, then opinion displacement is bounded by $\varepsilon$ (rhetorical accommodation).
When $\varepsilon^2 < B_x B_\delta \|\delta_0 x_0\|^2 / (\lambda_{\mathrm{eff}} \mu_{\mathrm{eff}})$, we have $\rho_- < \rho_+$. In the intermediate regime $\rho_- < \beta/\alpha < \rho_+$, the upper bounds on structural and opinion displacement both exceed $\varepsilon$, so this regime allows (but does not force) both variables to change by more than $\varepsilon$.
\end{corollary}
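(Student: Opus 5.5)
The corollary follows by substituting the threshold conditions into the two stagnation bounds and then comparing the thresholds. I take both propositions as given on the same interval $[0,T]$, with the same constants $B_x$, $B_\delta$, $\lambda_{\mathrm{eff}}$, $\mu_{\mathrm{eff}}$ and the same initial disagreement $\|\delta_0 x_0\|$.

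\textbf{Belief resolution.} Suppose $\beta/\alpha \le \rho_-$. Proposition~\ref{prop:structural_stagnation} gives the $T$-independent bound
\[
\|\delta(T)-\delta_0\|_F \le \frac{\beta}{\alpha}\cdot\frac{B_x\|\delta_0 x_0\|}{\lambda_{\mathrm{eff}}}.
\]
Inserting $\beta/\alpha \le \varepsilon\lambda_{\mathrm{eff}}/(B_x\|\delta_0 x_0\|)$, the factors cancel and the right-hand side is at most $\varepsilon$.

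\textbf{Rhetorical accommodation.} Suppose $\beta/\alpha \ge \rho_+$, that is, $\alpha/\beta \le \varepsilon\mu_{\mathrm{eff}}/(B_\delta\|\delta_0x_0\|)$. Proposition~\ref{prop:opinion_stagnation} gives
\[
\|x(T)-x(0)\| \le \frac{\alpha}{\beta}\cdot\frac{B_\delta\|\delta_0x_0\|}{\mu_{\mathrm{eff}}} \le \varepsilon.
\]
One degenerate case needs a sentence: if $\delta_0x_0=0$, the thresholds are undefined or infinite. In that case the system starts at equilibrium and both displacements vanish, so the conclusions hold trivially.

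\textbf{Ordering of the thresholds.} The ratio of the thresholds is
\[
\frac{\rho_-}{\rho_+} = \frac{\varepsilon^2\lambda_{\mathrm{eff}}\mu_{\mathrm{eff}}}{B_xB_\delta\|\delta_0x_0\|^2}.
\]
This is less than $1$ exactly when $\varepsilon^2 < B_xB_\delta\|\delta_0x_0\|^2/(\lambda_{\mathrm{eff}}\mu_{\mathrm{eff}})$, and both thresholds are positive, so $\rho_- < \rho_+$ under the stated condition.

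\textbf{Intermediate regime.} The claim here concerns the upper bounds only. Define
\[
f(r)=r\,B_x\|\delta_0x_0\|/\lambda_{\mathrm{eff}}, \qquad g(r)=r^{-1}B_\delta\|\delta_0x_0\|/\mu_{\mathrm{eff}},
\]
the structural and opinion bounds as functions of $r=\beta/\alpha$. The function $f$ is increasing with $f(\rho_-)=\varepsilon$, and $g$ is decreasing with $g(\rho_+)=\varepsilon$. Hence for $\rho_-<r<\rho_+$ both $f(r)>\varepsilon$ and $g(r)>\varepsilon$.

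I would then add a short remark that exceeding an upper bound does not force the actual displacement to be large; this is just the logical content of "allows but does not force." I expect no step to be a real obstacle. The only care needed is the bookkeeping of which bound uses the factor $1-e^{-\cdot T}\le 1$, plus the degenerate zero-disagreement case.
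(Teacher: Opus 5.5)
Your proposal is correct and follows the paper's argument, which the paper leaves implicit: insert the threshold conditions into the $T$-independent forms of the bounds in Propositions~\ref{prop:structural_stagnation} and~\ref{prop:opinion_stagnation}, then compare $\rho_-/\rho_+$ with $1$. Your treatment of the degenerate case $\delta_0 x_0 = 0$ is right, and so is your reading of the intermediate regime as a claim about the upper bounds only.
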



\section{Conclusion}\label{sec:conclusion}

We have developed a theory of stubborn agents on discourse sheaves that distinguishes between rigidity in private beliefs and rigidity in public expression. The central insight is that these two forms of stubbornness are governed by parallel cohomological structures. Stubborn beliefs reduce opinion dynamics to a sheaf Poisson equation on the sheaf of free opinions, while stubborn expressions reduce structure dynamics to Laplacian flow on an auxiliary sheaf of free structures. In both cases, constrained dynamics take the form of gradient descent with affine forcing, and equilibria admit equivalent characterizations as projections, corrections, and variational solutions.

When beliefs and expressions evolve simultaneously, four scenarios emerge from the possible configurations of frozen and adapting communication channels. Under suitable assumptions, trajectories converge, but perfect agreement is achievable only when the effective Laplacian has trivial kernel. Conservation laws ensure that equilibrium agreement, when achieved, reflects genuine accommodation rather than disengagement: the system cannot reach spurious consensus through vanishing opinions or ceasing communication. When update rates differ substantially, timescale separation determines which variable stagnates. If expressions adapt faster than beliefs, agents reach surface harmony before private convictions have had time to evolve. The reverse regime produces the opposite effect: flexible individuals conform to rigid communication norms while the underlying discourse structure remains unchanged.

Several directions remain open. The stagnation bounds derived here are sufficient conditions; sharper characterizations of the boundary between regimes would be valuable. On the modeling side, the present framework assumes fixed stalks and pairwise interactions. Allowing stalks to evolve would capture how agents' opinion spaces change as interests shift or new dimensions of discourse become salient~\cite{ahn2020crosscoupling}. Quiver representations \cite{sumray2024quiver} offer a natural generalization to group interactions and self-discourse, where an individual reconciles conflicting internal positions. The framework developed here treats stubbornness as a constraint on dynamics rather than a property of agents. This perspective may prove useful beyond opinion dynamics, wherever selective rigidity shapes the evolution of networked systems.





\bibliographystyle{abbrvnat}
\bibliography{references}  


\appendix

\section{Exact Sequences}\label{app:exact-sequences}

The constrained opinion dynamics of Section~\ref{sec:stubborn-opinions} arise from a subsheaf inclusion. This appendix develops the exact sequence structure underlying the sheaf of free opinions $\mathcal{Q}$.

The sheaf $\mathcal{Q}$ of free opinions is a subsheaf of the discourse sheaf $\mathcal{F}$. The inclusion $\phi: \mathcal{Q} \hookrightarrow \mathcal{F}$ is defined by $\phi_v = \iota_v: T_v \hookrightarrow \mathcal{F}(v)$ at vertices and $\phi_e = \mathrm{id}_{\mathcal{F}(e)}$ at edges. For each incidence $v \unlhd e$, the diagram
\[
\begin{tikzcd}
T_v \arrow[r, hook, "\iota_v"] \arrow[d, "\mathcal{Q}_{v \unlhd e}"'] & \mathcal{F}(v) \arrow[d, "\mathcal{F}_{v \unlhd e}"] \\
\mathcal{F}(e) \arrow[r, equal] & \mathcal{F}(e)
\end{tikzcd}
\]
commutes by definition of $\mathcal{Q}_{v \unlhd e} = \mathcal{F}_{v \unlhd e} \circ \iota_v$.

The quotient sheaf $\mathcal{S} = \mathcal{F}/\mathcal{Q}$ has stalks $\mathcal{S}(v) = \mathcal{F}(v)/T_v = S_v$ at vertices. At edges, $\mathcal{S}(e) = \mathcal{F}(e)/\mathcal{Q}(e) = 0$ since $\mathcal{Q}(e) = \mathcal{F}(e)$. The vanishing of edge stalks forces all restriction maps of $\mathcal{S}$ to be zero, hence $\delta^{\mathcal{S}} = 0$.

\begin{proposition}[Long exact sequence for stubborn opinions]
\label{prop:long-exact-seq}
The short exact sequence of sheaves $0 \to \mathcal{Q} \xrightarrow{\phi} \mathcal{F} \xrightarrow{\pi} \mathcal{S} \to 0$ induces a long exact sequence in cohomology:
\[
0 \to H^0(G; \mathcal{Q}) \xrightarrow{\phi_*} H^0(G; \mathcal{F}) \xrightarrow{\pi_*} C^0(S) \xrightarrow{\partial} H^1(G; \mathcal{Q}) \to H^1(G; \mathcal{F}) \to 0,
\]
where we have used $H^0(G; \mathcal{S}) = C^0(S)$ and $H^1(G; \mathcal{S}) = 0$.
\end{proposition}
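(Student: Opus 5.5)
The plan is to work at the cochain level, where exactness is elementary, and then apply the standard snake-lemma construction. For each of the three sheaves $\mathcal{Q}$, $\mathcal{F}$, $\mathcal{S}$ on the graph $G$ the cochain complex has only two terms, $C^0 \xrightarrow{\delta} C^1$. The cohomology groups are therefore $H^0 = \ker\delta$ and $H^1 = \operatorname{coker}\delta$.

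\textbf{Step 1: the short exact sequence of complexes.} I would first verify that $\phi$ and $\pi$ induce a short exact sequence of cochain complexes.
\[
0 \to C^\bullet(G;\mathcal{Q}) \xrightarrow{\phi} C^\bullet(G;\mathcal{F}) \xrightarrow{\pi} C^\bullet(G;\mathcal{S}) \to 0.
\]
In degree $0$ this is the orthogonal splitting $C^0(G;\mathcal{F}) = C^0(\mathcal{Q}) \oplus C^0(S)$ already noted in Section~\ref{sec:stubborn-opinions}, with $\phi = \iota_{\mathcal{Q}}$ and $\pi = P_S$. In degree $1$ it is $0 \to C^1(G;\mathcal{F}) \xrightarrow{\mathrm{id}} C^1(G;\mathcal{F}) \to 0 \to 0$, which is trivially exact.

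The maps commute with the coboundaries. For $\phi$ this is the commuting square $\mathcal{Q}_{v \unlhd e} = \mathcal{F}_{v \unlhd e}\circ\iota_v$ displayed above. For $\pi$ it holds because $\delta^{\mathcal{S}} = 0$ and the target $C^1(G;\mathcal{S})$ is zero.

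\textbf{Step 2: the cohomology of $\mathcal{S}$.} Since $\delta^{\mathcal{S}} = 0$ and $C^1(G;\mathcal{S}) = 0$, we get $H^0(G;\mathcal{S}) = C^0(S)$ and $H^1(G;\mathcal{S}) = 0$. This justifies the two identifications in the statement, and the final term $H^1(G;\mathcal{S}) = 0$ closes the sequence with $\to 0$.

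\textbf{Step 3: the connecting map and exactness.} The snake lemma (zig-zag lemma) then yields the long exact sequence. I would make the connecting map explicit to tie it to the compatibility discussion in Section~\ref{sec:stubborn-opinions}.

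Given $u \in C^0(S)$, lift it to $\tilde u = \iota_S(u) \in C^0(G;\mathcal{F})$ and apply $\delta^{\mathcal{F}}$. Since the degree-1 part of $\phi$ is the identity, the result $\delta^{\mathcal{F}}\tilde u$ is already a $1$-cochain of $\mathcal{Q}$. We set $\partial u = [\delta^{\mathcal{F}}\tilde u] \in H^1(G;\mathcal{Q}) = C^1/\operatorname{im}\delta^{\mathcal{Q}}$. Changing the lift by an element of $C^0(\mathcal{Q})$ changes $\delta^{\mathcal{F}}\tilde u$ by an element of $\operatorname{im}\delta^{\mathcal{Q}}$, so $\partial$ is well defined.

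For the reader's benefit I would check each exactness spot directly.
\begin{itemize}
\item At $H^0(G;\mathcal{Q})$: $\phi_*$ is injective because $\iota_{\mathcal{Q}}$ is injective.
\item At $H^0(G;\mathcal{F})$: if $x \in \ker\delta^{\mathcal{F}}$ has $P_S x = 0$, then $x = \iota_{\mathcal{Q}} y$. Since $\delta^{\mathcal{Q}} y = \delta^{\mathcal{F}} x = 0$, this $y$ lies in $H^0(G;\mathcal{Q})$.
\item At $C^0(S)$: $\partial u = 0$ means $\delta^{\mathcal{F}}\tilde u = \delta^{\mathcal{Q}} y'$ for some $y'$. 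Then $\tilde u - \iota_{\mathcal{Q}} y'$ is a global section of $\mathcal{F}$ mapping to $u$. The converse inclusion $\operatorname{im}\pi_* \subseteq \ker\partial$ follows similarly.
\item At $H^1(G;\mathcal{Q})$: the map $H^1(G;\mathcal{Q}) \to H^1(G;\mathcal{F})$ is induced by the identity on $C^1$, so it is the quotient map $C^1/\operatorname{im}\delta^{\mathcal{Q}} \to C^1/\operatorname{im}\delta^{\mathcal{F}}$. It is surjective, and its kernel is $\operatorname{im}\delta^{\mathcal{F}}/\operatorname{im}\delta^{\mathcal{Q}}$.
\item This kernel equals $\operatorname{im}\partial$, because $\operatorname{im}\delta^{\mathcal{F}} = \delta^{\mathcal{F}}(C^0(S)) + \operatorname{im}\delta^{\mathcal{Q}}$ by the degree-0 splitting.
\end{itemize}

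There is no serious obstacle. The only step needing care is the last exactness check at $H^1(G;\mathcal{Q})$, which hinges on the decomposition $\operatorname{im}\delta^{\mathcal{F}} = \delta^{\mathcal{F}}(C^0(S)) + \operatorname{im}\delta^{\mathcal{Q}}$ coming from the splitting of $C^0(G;\mathcal{F})$.
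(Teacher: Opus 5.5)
Your proposal is correct and follows the same route as the paper: read off $H^0(G;\mathcal{S}) = C^0(S)$ and $H^1(G;\mathcal{S}) = 0$ from $\delta^{\mathcal{S}} = 0$ and $C^1(G;\mathcal{S}) = 0$, then invoke the standard long exact sequence for a short exact sequence of cochain complexes. You are more explicit than the paper, which simply cites ``standard sheaf cohomology'': you verify the short exact sequence of complexes, give the connecting map $\partial u = [\delta^{\mathcal{F}}\tilde u]$ (which the paper describes only after the proof), and check each exactness spot by hand, but the argument is the same.
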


\begin{proof}
The identifications $H^0(G; \mathcal{S}) = C^0(S)$ and $H^1(G; \mathcal{S}) = 0$ follow from $\delta^{\mathcal{S}} = 0$: we have $H^0(G; \mathcal{S}) = \ker(\delta^{\mathcal{S}}) = C^0(\mathcal{S}) = C^0(S)$, and $H^1(G; \mathcal{S}) = C^1(\mathcal{S})/\mathrm{im}(\delta^{\mathcal{S}}) = 0$ since $C^1(\mathcal{S}) = 0$. The long exact sequence then follows from standard sheaf cohomology.
\end{proof}

The connecting homomorphism $\partial: C^0(S) \to H^1(G; \mathcal{Q})$ admits an explicit description. For a stubborn configuration $u \in C^0(S)$, lift to $\tilde{u} = \iota_S(u) \in C^0(G; \mathcal{F})$ and apply the coboundary to obtain $\delta^{\mathcal{F}} \tilde{u} \in C^1(G; \mathcal{F}) = C^1(\mathcal{Q})$. The class $\partial(u) = [\delta^{\mathcal{F}} \tilde{u}] \in H^1(G; \mathcal{Q})$ measures the obstruction to extending $u$ to a global section: by exactness at $C^0(S)$, we have $\partial(u) = 0$ if and only if there exists $x^* \in H^0(G; \mathcal{F})$ with $P_S(x^*) = u$.

This criterion has a natural interpretation in terms of opinion dynamics. When $\partial(u) = 0$, the stubborn configuration is compatible: there exists an opinion distribution achieving perfect public agreement among all agents, stubborn and free alike. The stubborn agents happen to hold positions that can be harmonized. When $\partial(u) \neq 0$, the stubborn configuration is incompatible: no matter what opinions free agents adopt, residual disagreement is unavoidable. The Poisson equation still admits a solution, but free agents minimize disagreement rather than eliminate it. The cohomology class $\partial(u) \in H^1(G; \mathcal{Q})$ thus measures the obstruction to harmony imposed by stubborn beliefs.

The injectivity of $\phi_*: H^0(G; \mathcal{Q}) \to H^0(G; \mathcal{F})$ has a direct consequence for the Poisson equation of Theorem~\ref{thm:stubborn-directions}: any global section of $\mathcal{Q}$ extends to a global section of $\mathcal{F}$. This implies $\ker(L_{\mathcal{Q}}) \subseteq \ker(L_{SQ})$, ensuring solvability.

The sheaf of free structures $\mathcal{H}^x_{\mathcal{I}}$ from Section~\ref{ssec:sheaf-free-structures} is likewise a subsheaf of $\mathcal{H}^x$, so a similar exact sequence structure applies. The following section examines the categorical construction of $\mathcal{H}^x$ itself and the bilinear duality between opinions and structures.


\section{Categorical Construction of the Sheaf of Structures}\label{app:categorical-structure}

The sheaf of structures $\mathcal{H}^x$ introduced in Definition~\ref{def:sheaf-of-structures} exhibits a natural duality with the opinion sheaf $\mathcal{F}$. This appendix examines the categorical underpinnings of this construction: its generality, the precise sense in which $\mathcal{F}$ and $\mathcal{H}^x$ are dual, and a canonical embedding that clarifies their relationship.

\subsection{Generality of the Construction}\label{app:generality}

The construction of $\mathcal{H}^x$ requires three categorical ingredients. First, for each incidence $v \unlhd e$, the space $\mathrm{Hom}(\mathcal{F}(v), \mathcal{F}(e))$ must exist as an object in the same category as the stalks. Second, at vertices with multiple incident edges, the vertex stalk 
\[
\mathcal{H}^x(v) = \bigoplus_{e : v \unlhd e} \mathrm{Hom}(\mathcal{F}(v), \mathcal{F}(e))
\] 
requires finite direct sums. Third, the restriction maps must be valid morphisms in the category. Concretely, the restriction to edge $e$ factors as $(\mathcal{H}^x)_{v \unlhd e} = \mathrm{ev}_{x_v} \circ \pi_e$, where $\pi_e: \mathcal{H}^x(v) \to \mathrm{Hom}(\mathcal{F}(v), \mathcal{F}(e))$ projects onto the $e$-component and $\mathrm{ev}_{x_v}(\rho) = \rho(x_v)$ evaluates at the fixed opinion.

Any \emph{additive category with internal Hom} satisfies these requirements. In particular, the construction applies to vector spaces over any field $k$ (the setting of this paper), modules over a commutative ring $R$, and abelian groups. It does \emph{not} extend naturally to sheaves valued in sets, where Hom-objects lack the necessary structure, or to topological spaces without linear structure.

The categorical construction of $\mathcal{H}^x$ is purely algebraic and does not require inner products. The analytical results of this paper (least-squares characterizations, gradient flows, and convergence theorems) do require inner products on the stalks, as assumed throughout the main text. Under this assumption, $\mathrm{Hom}(\mathcal{F}(v), \mathcal{F}(e))$ inherits the Frobenius inner product.

\subsection{The Duality Between $\mathcal{F}$ and $\mathcal{H}^x$}\label{app:duality}

A natural question is whether the relationship between the opinion sheaf $\mathcal{F}$ and the structure sheaf $\mathcal{H}^x$ can be expressed as a sheaf morphism. Recall that a sheaf morphism $\varphi: \mathcal{F} \to \mathcal{G}$ consists of linear maps $\varphi_\sigma: \mathcal{F}(\sigma) \to \mathcal{G}(\sigma)$ for each cell $\sigma$, such that for every incidence $v \unlhd e$, the diagram
\[
\begin{tikzcd}
\mathcal{F}(v) \arrow[r, "\varphi_v"] \arrow[d, "\mathcal{F}_{v \unlhd e}"'] & \mathcal{G}(v) \arrow[d, "\mathcal{G}_{v \unlhd e}"] \\
\mathcal{F}(e) \arrow[r, "\varphi_e"'] & \mathcal{G}(e)
\end{tikzcd}
\]
commutes. A morphism $\varphi: \mathcal{F} \to \mathcal{H}^x$ would require:
\[
\begin{tikzcd}
\mathcal{F}(v) \arrow[r, dashed] \arrow[d, "\mathcal{F}_{v \unlhd e}"'] & \mathrm{Hom}(\mathcal{F}(v), \mathcal{F}(e)) \arrow[d, "\mathrm{ev}_{x_v}"] \\
\mathcal{F}(e) \arrow[r, dashed] & \mathcal{F}(e)
\end{tikzcd}
\]
The restriction map $\mathrm{ev}_{x_v}$ depends on the opinion $x_v$, so $\mathcal{H}^x$ is not a fixed target sheaf. The relationship between $\mathcal{F}$ and $\mathcal{H}^x$ is therefore not a sheaf morphism but a bilinear pairing.

For an edge $e = u \sim v$, an opinion cochain $x \in C^0(G; \mathcal{F})$, and a configuration of restriction maps $\rho$, define the \emph{discrepancy pairing}
\[
\langle \rho, x \rangle_e := \rho_{v \unlhd e}(x_v) - \rho_{u \unlhd e}(x_u).
\]
This pairing captures the edge discrepancy, and satisfies the central identity
\begin{equation}\label{eq:duality-identity}
(\delta^{\mathcal{F}^\rho} x)_e = (\delta^{\mathcal{H}^x} \rho)_e = \langle \rho, x \rangle_e,
\end{equation}
where $\mathcal{F}^\rho$ denotes the sheaf with restriction maps $\rho$. The coboundary of $x$ in the sheaf determined by $\rho$ equals the coboundary of $\rho$ in the sheaf determined by $x$. This identity is the correct expression of duality between opinions and structures.

\subsection{The Bidual Embedding}\label{app:bidual}

Although there is no canonical sheaf morphism from $\mathcal{F}$ to $\mathcal{H}^x$, there is a canonical embedding of $\mathcal{F}$ into a related sheaf that clarifies the structure. This construction is analogous to the bidual embedding $V \hookrightarrow V^{**}$ from linear algebra, where $v \mapsto \mathrm{ev}_v$ embeds a vector space into its double dual.

\begin{definition}[Bidual sheaf]\label{def:bidual-sheaf}
Let $\mathcal{F}$ be a discourse sheaf on $G$. For each vertex $v$, define
\[
\mathcal{H}(v) := \bigoplus_{e : v \unlhd e} \mathrm{Hom}(\mathcal{F}(v), \mathcal{F}(e)), \qquad E_v := \bigoplus_{e : v \unlhd e} \mathcal{F}(e).
\]
The \emph{bidual sheaf} $\mathcal{K}$ is the cellular sheaf on $G$ with:
\begin{itemize}
\item Vertex stalks: $\mathcal{K}(v) := \mathrm{Hom}(\mathcal{H}(v), E_v)$.
\item Edge stalks: $\mathcal{K}(e) := \mathcal{F}(e)$.
\item Restriction maps: For each incidence $v \unlhd e$ and each $\Psi \in \mathcal{K}(v)$,
\[
\mathcal{K}_{v \unlhd e}(\Psi) := \pi_e\bigl(\Psi(\mathcal{F}_v)\bigr),
\]
where $\mathcal{F}_v := (\mathcal{F}_{v \unlhd e'})_{e' : v \unlhd e'} \in \mathcal{H}(v)$ is the tuple of restriction maps of $\mathcal{F}$ at $v$, and $\pi_e: E_v \to \mathcal{F}(e)$ projects onto the $e$-component.
\end{itemize}
\end{definition}

The tuple $\mathcal{F}_v$ encodes the fixed restriction maps of the original sheaf at vertex $v$. Incorporating this fixed data into $\mathcal{K}$'s restriction maps is the key to the construction: the target sheaf $\mathcal{K}$ does not depend on any opinion $x$, so a morphism $\mathcal{F} \to \mathcal{K}$ can be defined.

\begin{proposition}[Canonical embedding]\label{prop:canonical-embedding}
The maps
\[
\varphi_v: \mathcal{F}(v) \to \mathcal{K}(v), \quad x_v \mapsto \mathrm{ev}_{x_v},
\]
where $\mathrm{ev}_{x_v}: \mathcal{H}(v) \to E_v$ evaluates each component at $x_v$, together with $\varphi_e := \mathrm{id}_{\mathcal{F}(e)}$, define a sheaf morphism $\varphi: \mathcal{F} \to \mathcal{K}$. That is, for every incidence $v \unlhd e$, the diagram
\[
\begin{tikzcd}
\mathcal{F}(v) \arrow[r, "\varphi_v"] \arrow[d, "\mathcal{F}_{v \unlhd e}"'] & \mathcal{K}(v) \arrow[d, "\mathcal{K}_{v \unlhd e}"] \\
\mathcal{F}(e) \arrow[r, "\mathrm{id}"'] & \mathcal{K}(e)
\end{tikzcd}
\]
commutes.
\end{proposition}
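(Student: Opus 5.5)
The plan is to verify the claim directly from the definitions, in three short steps: well-definedness and linearity of each component map, commutativity of the square by chasing an arbitrary element, and a check that the map is injective (since it is called an embedding).

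\textbf{Well-definedness and linearity.} For $x_v \in \mathcal{F}(v)$, the map $\mathrm{ev}_{x_v}: \mathcal{H}(v) \to E_v$ sends a tuple $(\rho_{e'})_{e' : v \unlhd e'}$ to $(\rho_{e'}(x_v))_{e'}$. This map is linear in $\rho$, so $\mathrm{ev}_{x_v} \in \mathrm{Hom}(\mathcal{H}(v), E_v) = \mathcal{K}(v)$. It is also linear in $x_v$, because each $\rho_{e'}$ is linear; hence $\varphi_v$ is a linear map $\mathcal{F}(v) \to \mathcal{K}(v)$. The edge map $\varphi_e = \mathrm{id}$ is trivially linear, and it is well typed because $\mathcal{K}(e) = \mathcal{F}(e)$.

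\textbf{Commutativity.} Fix an incidence $v \unlhd e$ and $x_v \in \mathcal{F}(v)$. Going down and then across gives $\mathcal{F}_{v \unlhd e}(x_v)$. Going across and then down gives
\[
\mathcal{K}_{v \unlhd e}(\mathrm{ev}_{x_v}) = \pi_e\bigl(\mathrm{ev}_{x_v}(\mathcal{F}_v)\bigr) = \pi_e\bigl((\mathcal{F}_{v \unlhd e'}(x_v))_{e'}\bigr) = \mathcal{F}_{v \unlhd e}(x_v).
\]
So the square commutes. This is exactly the bidual identity $\mathrm{ev}_{x}(f) = f(x)$, specialized at the distinguished point $\mathcal{F}_v \in \mathcal{H}(v)$.

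\textbf{Injectivity.} This is the only step that needs more than unwinding definitions, and I expect it to be the main point to watch. Suppose $\mathrm{ev}_{x_v} = 0$. Then $\rho(x_v) = 0$ for every $\rho \in \mathrm{Hom}(\mathcal{F}(v), \mathcal{F}(e))$ and every incident edge $e$. If $v$ has at least one incident edge $e$ with $\mathcal{F}(e) \neq 0$, choose $\rho = w\,x_v^T$ for some nonzero $w \in \mathcal{F}(e)$. Then $\rho(x_v) = \|x_v\|^2 w$, which forces $x_v = 0$. The edge component $\varphi_e = \mathrm{id}$ is injective outright.

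The degenerate case is an isolated vertex, or one whose incident edge stalks are all zero; there $\mathcal{K}(v) = 0$ and injectivity fails. I would state this hypothesis explicitly, or read ``embedding'' as holding wherever it is nondegenerate. Since $G$ is connected, the isolated-vertex case only arises for a single-vertex graph. Commutativity itself requires no hypothesis at all.
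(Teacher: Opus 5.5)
Your proposal is correct and follows the paper's proof essentially verbatim. It shows linearity of $\varphi_v$ from linearity of evaluation in the point, and commutativity by the same element chase $\mathcal{K}_{v \unlhd e}(\mathrm{ev}_{x_v}) = \pi_e(\mathrm{ev}_{x_v}(\mathcal{F}_v)) = \mathcal{F}_{v \unlhd e}(x_v)$. Your injectivity step is not required by the statement, which asserts only a sheaf morphism. It does match the paper's remark after the proof that $\varphi$ is injective whenever some incident edge stalk is nonzero, and your choice $\rho = w\,x_v^T$ makes that remark precise.
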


\begin{proof}
We verify linearity and commutativity.

\emph{Linearity.} For $x, y \in \mathcal{F}(v)$ and $\lambda \in \mathbb{R}$, evaluation is linear in the point: $\mathrm{ev}_{x+y} = \mathrm{ev}_x + \mathrm{ev}_y$ and $\mathrm{ev}_{\lambda x} = \lambda \, \mathrm{ev}_x$.

\emph{Commutativity.} Fix an incidence $v \unlhd e$. The left-down path gives
\[
\mathcal{K}_{v \unlhd e}(\varphi_v(x_v)) = \pi_e(\mathrm{ev}_{x_v}(\mathcal{F}_v)) = \pi_e((\mathcal{F}_{v \unlhd e'}(x_v))_{e'}) = \mathcal{F}_{v \unlhd e}(x_v).
\]
The right-down path gives $\varphi_{e}(\mathcal{F}_{v \unlhd e}(x_v)) = \mathcal{F}_{v \unlhd e}(x_v)$, so the two paths agree.
\end{proof}

The embedding $\varphi$ is injective whenever the maps in $\mathcal{H}(v)$ separate points of $\mathcal{F}(v)$, meaning distinct opinions yield distinct evaluation maps. In finite dimensions, this holds as long as some incident edge has a nonzero stalk. Under this mild condition, $\varphi$ embeds $\mathcal{F}$ as a subsheaf of $\mathcal{K}$.

The bidual sheaf $\mathcal{K}$ does not coincide with the structure sheaf $\mathcal{H}^x$, but they share a common ingredient: $\mathcal{H}(v)$ appears as the vertex stalk of $\mathcal{H}^x$ and as the domain in $\mathcal{K}(v) = \mathrm{Hom}(\mathcal{H}(v), E_v)$. The morphism $\varphi: \mathcal{F} \to \mathcal{K}$ embeds opinions into a fixed target sheaf, complementing the bilinear pairing of Section~\ref{app:duality}, which captures the interaction between opinions and structures directly.


\section{Regularization for Joint Dynamics}\label{app:regularization}

The constrained joint dynamics of Section~\ref{sec:joint} converge under favorable conditions, but two pathologies can prevent global boundedness. First, when Type A edges are present ($E_A \neq \varnothing$), the pursuit of zero disagreement can cause evolving restriction maps to increase in magnitude: the Frobenius norm $\|\delta\|_F$ may grow even as the disagreement energy $\Psi$ decreases (see Example~\ref{ex:typeA-growth}).

\begin{example}[Norm growth under Type A adaptation]\label{ex:typeA-growth}
Consider a single edge $e = u \sim v$ with scalar stalks $\mathcal{F}(u) = \mathcal{F}(v) = \mathcal{F}(e) = \mathbb{R}$. Let $x_u = 1$ be stubborn, $x_v$ free, and suppose the incidence $(u, e)$ adapts while $(v, e)$ is frozen with $\mathcal{F}_{v \unlhd e} = 1$. Writing $p = \mathcal{F}_{u \unlhd e}$, the discrepancy is $d = x_v - p$ and the dynamics reduce to
\[
\frac{dx_v}{dt} = -\alpha d, \qquad \frac{dp}{dt} = \beta d.
\]
The discrepancy satisfies $\frac{d}{dt}(d) = -(\alpha + \beta)d$, so $d(t) = d_0 e^{-(\alpha+\beta)t}$ decays exponentially. Meanwhile, $p(t) = p_0 + \frac{\beta d_0}{\alpha + \beta}(1 - e^{-(\alpha+\beta)t})$. With initial conditions $p_0 = 1$, $x_v(0) = 2$, and $\alpha = \beta = 1$:
\[
\|\delta(0)\|_F^2 = p_0^2 + 1 = 2, \qquad \|\delta(\infty)\|_F^2 = \Bigl(1 + \tfrac{1}{2}\Bigr)^2 + 1 = \tfrac{13}{4}.
\]
The Frobenius norm increases by approximately $28\%$ despite the disagreement energy $\Psi = \frac{1}{2}d^2$ decreasing to zero. In networks with multiple Type A edges, such growth can accumulate, motivating the regularization introduced below.
\end{example}

\subsection{Convergence results}

To ensure global boundedness and guarantee convergence without additional structural assumptions, we introduce regularization terms that penalize deviation from the initial state. The \emph{regularized energy functional} is
\begin{align}\label{eq:regularized_energy_app}
\mathcal{L}_{\lambda,\mu}(y, \delta) = \frac{1}{2} \| \delta x \|^2 + \frac{\lambda}{2} \| y - y_0 \|^2 + \frac{\mu}{2} \| \delta - \delta_0 \|_F^2,
\end{align}
where $x = \tilde{u} + \tilde{y}$ is the total opinion state and $(y_0, \delta_0)$ denotes the initial condition. The first term measures disagreement, while the second and third terms penalize deviation of opinions and communication structure from their initial values, respectively. The corresponding gradient descent dynamics are
\begin{equation}\label{eq:regularized_dynamics_app}
\begin{aligned}
\frac{dy}{dt} &= -\alpha \Bigl( P_{\mathcal{Q}} \bigl( \delta^T \delta x \bigr) + \lambda (y - y_0) \Bigr), \\
\frac{d\delta}{dt} &= -\beta \, \Pi_{\mathcal{M}} \Bigl( \delta x\, x^T + \mu (\delta - \delta_0) \Bigr).
\end{aligned}
\end{equation}

\begin{proposition}[Convergence of Regularized Dynamics]\label{prop:regularization}
For any $\lambda, \mu > 0$, the trajectories of the regularized dynamics \eqref{eq:regularized_dynamics_app} satisfy:
\begin{enumerate}
    \item The regularized energy $\mathcal{L}_{\lambda,\mu}$ is nonincreasing along trajectories.
    \item The trajectories are globally bounded and precompact.
    \item The system converges to an equilibrium $(y^*, \delta^*)$ satisfying
    \begin{align}
    P_{\mathcal{Q}}(\delta^{*T} \delta^* x^*) + \lambda(y^* - y_0) &= 0, \\
    \Pi_{\mathcal{M}}\bigl(\delta^* x^* (x^*)^T + \mu(\delta^* - \delta_0)\bigr) &= 0.
    \end{align}
\end{enumerate}
\end{proposition}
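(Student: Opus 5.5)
The plan is to treat \eqref{eq:regularized_dynamics_app} as a projected gradient flow of the polynomial energy $\mathcal{L}_{\lambda,\mu}$ on the affine subspace where stubborn opinions and frozen restriction maps are held fixed. Item (1) then reduces to a Lyapunov computation, item (2) follows from coercivity, and item (3) follows from LaSalle's principle together with an analyticity argument.

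For (1), I would differentiate $\mathcal{L}_{\lambda,\mu}(y(t),\delta(t))$ as in the proof of Theorem~\ref{thm:constrained_convergence}. The partial gradient in $y$ is $P_{\mathcal{Q}}(\delta^T\delta x) + \lambda(y-y_0)$, because $x=\tilde u+\tilde y$ and $\partial x/\partial y = \iota_{\mathcal{Q}}$. The partial gradient in $\delta$ is $\delta x x^T + \mu(\delta-\delta_0)$. Since $d\delta/dt$ lies in $\mathcal{M}$ and $\Pi_{\mathcal{M}}$ is an orthogonal projection, pairing each gradient with its velocity gives
\[
\frac{d\mathcal{L}_{\lambda,\mu}}{dt} = -\alpha\bigl\|P_{\mathcal{Q}}(\delta^T\delta x)+\lambda(y-y_0)\bigr\|^2 - \beta\bigl\|\Pi_{\mathcal{M}}(\delta x x^T+\mu(\delta-\delta_0))\bigr\|_F^2 \le 0 .
\]

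For (2), I would use the fact that the disagreement term is nonnegative. This gives
\[
\tfrac{\lambda}{2}\|y(t)-y_0\|^2+\tfrac{\mu}{2}\|\delta(t)-\delta_0\|_F^2 \le \mathcal{L}_{\lambda,\mu}(y(t),\delta(t)) \le \mathcal{L}_{\lambda,\mu}(y_0,\delta_0)=\tfrac12\|\delta_0x_0\|^2 .
\]
So $(y,\delta)$ stays in a fixed compact ball. This holds regardless of Type~A edges or of $H^0(G;\mathcal{Q})$, which is exactly the role of the regularizers. Because the vector field is polynomial, local solutions extend globally once they are bounded, and the forward orbit is precompact.

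For (3), LaSalle's invariance principle applied to the Lyapunov function $\mathcal{L}_{\lambda,\mu}$ shows that the $\omega$-limit set is nonempty, compact, connected, and contained in the zero set of $d\mathcal{L}/dt$. That zero set is exactly the set of points satisfying the two stated equilibrium equations. This already gives convergence to the equilibrium set. The main obstacle is upgrading this to convergence to a \emph{single} equilibrium $(y^*,\delta^*)$. The energy is nonconvex because $\|\delta x\|^2$ is quartic, so equilibria need not be isolated, and LaSalle alone does not exclude drift along a continuum.

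To handle this, I would first invoke the Łojasiewicz gradient inequality. The functional $\mathcal{L}_{\lambda,\mu}$ is a real polynomial, hence real-analytic, on the affine state space. The flow is a gradient flow with respect to the metric weighted by $\alpha^{-1},\beta^{-1}$, and its trajectory is bounded. Łojasiewicz's theorem then says that bounded trajectories of analytic gradient flows have finite length and converge to a single critical point. As a fallback, if this route is unsatisfactory, I would note that $\mathcal{L}_{\lambda,\mu}$ is strongly convex in $y$ for fixed $\delta$ and in $\delta$ for fixed $y$. Under an additional assumption that $\lambda,\mu$ are large relative to the bound from (2), the Hessian becomes positive definite on the invariant ball, which makes the critical point unique and gives exponential convergence.
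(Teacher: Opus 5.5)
Your proof is correct and follows the paper's route closely. Items (1) and (2) match the paper essentially line for line: the same dissipation identity and the same coercivity bounds $\|y-y_0\|^2\le 2\mathcal{L}_{\lambda,\mu}(0)/\lambda$ and $\|\delta-\delta_0\|_F^2\le 2\mathcal{L}_{\lambda,\mu}(0)/\mu$. You also make explicit the global-existence step (an a priori bound on the maximal interval rules out blow-up), which the paper leaves implicit.

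The real difference is in (3). The paper stops after LaSalle's invariance principle and reads off convergence to ``an equilibrium $(y^*,\delta^*)$''. As you correctly observe, LaSalle only places the $\omega$-limit set inside the possibly non-isolated equilibrium set, so the paper's argument strictly proves convergence to the equilibrium set, not to a point. Your \L{}ojasiewicz step is what actually justifies the single limit, and it applies here. On the affine space of free coordinates (free opinions $y$ and the adapting restriction maps), $\Pi_{\mathcal{M}}$ just selects the adapting coordinates, and $\delta-\delta_0$ vanishes on the frozen ones. The flow is therefore $-\operatorname{diag}(\alpha I,\beta I)\nabla\mathcal{L}_{\lambda,\mu}$, a gradient flow for a constant metric of a polynomial, hence analytic, function. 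Bounded trajectories of such flows have finite length and converge to a single critical point.

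Drop the fallback. It is unnecessary, and its requirement that $\lambda,\mu$ be large relative to the invariant ball adds hypotheses the proposition does not make, so it could not replace the \L{}ojasiewicz argument anyway.
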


\begin{proof}
We establish the three claims in turn.

For energy dissipation, the time derivative of $\mathcal{L}_{\lambda,\mu}$ along trajectories is
\[
\frac{d\mathcal{L}_{\lambda,\mu}}{dt} = \langle \delta x, \frac{d\delta}{dt} x + \delta \frac{dx}{dt} \rangle + \lambda \langle y - y_0, \frac{dy}{dt} \rangle + \mu \langle \delta - \delta_0, \frac{d\delta}{dt} \rangle_F.
\]
Using the cyclic property of the trace, the first term becomes $\langle \delta x, \frac{d\delta}{dt} x \rangle = \langle \frac{d\delta}{dt}, \delta x\, x^T \rangle_F$. For the second term, since $\frac{dx}{dt} = \iota_{\mathcal{Q}}(\frac{dy}{dt})$, we have
\[
\langle \delta x, \delta \iota_{\mathcal{Q}}(\frac{dy}{dt}) \rangle = \langle \delta^T \delta x, \iota_{\mathcal{Q}}(\frac{dy}{dt}) \rangle = \langle P_{\mathcal{Q}}(\delta^T \delta x), \frac{dy}{dt} \rangle,
\]
where the last equality uses $P_{\mathcal{Q}} = \iota_{\mathcal{Q}}^T$ when restricted to $\mathrm{im}(\iota_{\mathcal{Q}})$. Collecting terms gives
\[
\frac{d\mathcal{L}_{\lambda,\mu}}{dt} = \langle \frac{d\delta}{dt}, \delta x\, x^T + \mu(\delta - \delta_0) \rangle_F + \langle \frac{dy}{dt}, P_{\mathcal{Q}}(\delta^T \delta x) + \lambda(y - y_0) \rangle.
\]
Substituting the dynamics from \eqref{eq:regularized_dynamics_app} and using the self-adjoint property of orthogonal projections yields
\[
\frac{d\mathcal{L}_{\lambda,\mu}}{dt} = -\beta \| \Pi_{\mathcal{M}}(\delta x\, x^T + \mu(\delta - \delta_0)) \|_F^2 - \alpha \| P_{\mathcal{Q}}(\delta^T \delta x) + \lambda(y - y_0) \|^2 \leq 0.
\]

For boundedness, since $\mathcal{L}_{\lambda,\mu}(t) \leq \mathcal{L}_{\lambda,\mu}(0)$ for all $t \geq 0$, each term in \eqref{eq:regularized_energy_app} is individually bounded by $\mathcal{L}_{\lambda,\mu}(0)$. In particular,
\begin{align}\label{eq:regularized_bounds}
\|y(t) - y_0\|^2 \leq \frac{2\mathcal{L}_{\lambda,\mu}(0)}{\lambda}, \qquad \|\delta(t) - \delta_0\|_F^2 \leq \frac{2\mathcal{L}_{\lambda,\mu}(0)}{\mu}.
\end{align}
Thus both $y(t)$ and $\delta(t)$ remain in bounded sets, and the forward orbit is precompact.

For convergence, LaSalle's Invariance Principle applies to the precompact orbit with nonincreasing Lyapunov function $\mathcal{L}_{\lambda,\mu}$. The trajectory converges to the largest invariant subset of $\{d\mathcal{L}_{\lambda,\mu}/dt = 0\}$. Since both squared terms in the dissipation must vanish, this invariant set is precisely the equilibrium set characterized by the stated conditions.
\end{proof}

The regularization terms model agents' \emph{reluctance to change}. The parameter $\lambda$ captures cognitive inertia in updating beliefs, while $\mu$ captures the cost of adapting established communication patterns. At equilibrium, the condition $P_{\mathcal{Q}}(\delta^{*T} \delta^* x^*) = -\lambda(y^* - y_0)$ shows that agents tolerate some social pressure rather than fully adapting their opinions. Similarly, the structure equilibrium shows that agents tolerate some residual disagreement rather than fully adapting their communication. Small $\lambda, \mu$ favor harmony over authenticity; large values favor authenticity over harmony.

In the limit $\lambda, \mu \to 0$, the regularized dynamics formally reduce to the unregularized dynamics \eqref{eq:constrained_joint}. When the unregularized system has bounded trajectories, the equilibria coincide in this limit.

\subsection{Unconditional Stagnation Bounds}

The stagnation bounds of Section~\ref{sec:timescale} require either positive effective spectral gap $\lambda_{\mathrm{eff}} > 0$ (for structural stagnation) or positive effective spectral gap $\mu_{\mathrm{eff}} > 0$ (for opinion stagnation). When these conditions fail, regularization provides unconditional bounds.

\begin{corollary}[Structural Bound with Stubborn Vertices]\label{cor:structural_stagnation_stubborn}
Consider the regularized joint dynamics \eqref{eq:regularized_dynamics_app} with parameters $\lambda, \mu > 0$ and initial condition $(y_0, \delta_0)$. Then
\[
\sup_{t \ge 0} \|\delta(t) - \delta_0\|_F \le \sqrt{\frac{2\mathcal{L}_{\lambda,\mu}(0)}{\mu}}.
\]
\end{corollary}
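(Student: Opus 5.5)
The plan is to read this off directly from the energy dissipation already established in Proposition~\ref{prop:regularization}; no spectral gap hypothesis enters. First, I will recall that along trajectories of \eqref{eq:regularized_dynamics_app} the derivative of the regularized energy is
\[
\frac{d\mathcal{L}_{\lambda,\mu}}{dt} = -\beta \| \Pi_{\mathcal{M}}(\delta x\, x^T + \mu(\delta - \delta_0)) \|_F^2 - \alpha \| P_{\mathcal{Q}}(\delta^T \delta x) + \lambda(y - y_0) \|^2 \le 0,
\]
so that $\mathcal{L}_{\lambda,\mu}(t) \le \mathcal{L}_{\lambda,\mu}(0)$ for every $t \ge 0$ at which the solution exists.

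Second, I will use that all three summands of \eqref{eq:regularized_energy_app} are nonnegative, so each is individually dominated by the total. Dropping the disagreement term $\tfrac12\|\delta x\|^2$ and the opinion penalty $\tfrac{\lambda}{2}\|y-y_0\|^2$ yields
\[
\tfrac{\mu}{2}\|\delta(t)-\delta_0\|_F^2 \le \mathcal{L}_{\lambda,\mu}(t) \le \mathcal{L}_{\lambda,\mu}(0),
\]
which is exactly the second inequality in \eqref{eq:regularized_bounds}. Dividing by $\mu/2$, taking square roots, and then the supremum over $t \ge 0$ gives the claim. I will also note that $\mathcal{L}_{\lambda,\mu}(0) = \tfrac12\|\delta_0 x_0\|^2$ because both penalties vanish at the initial condition. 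The bound therefore reads $\|\delta_0 x_0\|/\sqrt{\mu}$, which parallels the form of Proposition~\ref{prop:structural_stagnation} but holds with no assumption on $\lambda_{\mathrm{eff}}$.

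The only point needing care is that the supremum is over all $t \ge 0$, which requires global existence of the solution. Local existence follows from the polynomial right-hand side. The same energy bound, applied both to $\|\delta-\delta_0\|_F$ and to $\|y-y_0\|$, confines the solution to a compact set, so it cannot blow up in finite time and extends to $[0,\infty)$. This is item (2) of Proposition~\ref{prop:regularization}, which I will cite directly; I expect no real obstacle. Finally, I will remark that the bound is independent of $\alpha$, $\beta$ and of any stubborn configuration: $\tilde u$ affects only the value $\mathcal{L}_{\lambda,\mu}(0)$ through $x_0 = \tilde u + \tilde y_0$. This justifies the title's reference to stubborn vertices.
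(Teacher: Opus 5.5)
Your proposal is correct and follows the paper's argument: the paper's proof is simply ``immediate from \eqref{eq:regularized_bounds}'', meaning monotonicity of $\mathcal{L}_{\lambda,\mu}$ together with dropping the nonnegative disagreement and opinion-penalty terms. Your extra observations are also correct and make explicit what the paper leaves implicit: that $\mathcal{L}_{\lambda,\mu}(0) = \tfrac12\|\delta_0 x_0\|^2$ because both penalties vanish at $t=0$, and that the same energy bound gives global existence of the solution.
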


\begin{proof}
Immediate from \eqref{eq:regularized_bounds}.
\end{proof}

This bound applies regardless of whether stubborn vertices are present. When $U \neq \varnothing$, the opinion dynamics cannot drive disagreement to zero on edges incident to stubborn vertices, so the structural velocity remains nonzero indefinitely. The regularization term $\frac{\mu}{2}\|\delta - \delta_0\|_F^2$ arrests this drift by penalizing deviation from the initial communication structure.

\begin{corollary}[Opinion Bound with Frozen Edges]\label{cor:opinion_stagnation_frozen}
Consider the regularized joint dynamics \eqref{eq:regularized_dynamics_app} with parameters $\lambda, \mu > 0$ and initial condition $(y_0, \delta_0)$. Then
\[
\sup_{t \ge 0} \|y(t) - y_0\| \le \sqrt{\frac{2\mathcal{L}_{\lambda,\mu}(0)}{\lambda}}.
\]
\end{corollary}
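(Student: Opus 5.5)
The claim follows directly from the energy monotonicity established in Proposition~\ref{prop:regularization}, exactly as in Corollary~\ref{cor:structural_stagnation_stubborn}, with the roles of opinions and structures exchanged.

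First, by item (1) of Proposition~\ref{prop:regularization}, the regularized energy $\mathcal{L}_{\lambda,\mu}$ in \eqref{eq:regularized_energy_app} is nonincreasing along trajectories of \eqref{eq:regularized_dynamics_app}. Hence $\mathcal{L}_{\lambda,\mu}(y(t),\delta(t)) \le \mathcal{L}_{\lambda,\mu}(0)$ for all $t \ge 0$. Item (2) of the same proposition gives global boundedness, so the solution exists for all $t\ge 0$ and the supremum over $t\ge0$ is meaningful.

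Second, all three summands of $\mathcal{L}_{\lambda,\mu}$ are nonnegative. Dropping the disagreement term $\tfrac12\|\delta x\|^2$ and the structural penalty $\tfrac{\mu}{2}\|\delta-\delta_0\|_F^2$ therefore gives
\[
\tfrac{\lambda}{2}\|y(t)-y_0\|^2 \le \mathcal{L}_{\lambda,\mu}(0).
\]
This is precisely the first inequality in \eqref{eq:regularized_bounds}. Dividing by $\lambda/2>0$, taking square roots, and then the supremum over $t$ yields the stated bound.

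As a side remark, $\mathcal{L}_{\lambda,\mu}(0) = \tfrac12\|\delta_0x_0\|^2$, since both penalty terms vanish at the initial condition. The bound therefore reads $\|\delta_0 x_0\|/\sqrt{\lambda}$ and is independent of $\alpha,\beta,\mu$. There is no genuine obstacle here; the only point needing care is to cite the monotonicity and the global existence from Proposition~\ref{prop:regularization} rather than reprove them.
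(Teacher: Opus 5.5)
Your proof is correct and takes the same approach as the paper, which simply cites \eqref{eq:regularized_bounds}: $\mathcal{L}_{\lambda,\mu}$ is nonincreasing and its summands are nonnegative, so $\tfrac{\lambda}{2}\|y(t)-y_0\|^2 \le \mathcal{L}_{\lambda,\mu}(0)$. Your side remark that $\mathcal{L}_{\lambda,\mu}(0)=\tfrac12\|\delta_0x_0\|^2$, which makes the bound $\|\delta_0x_0\|/\sqrt{\lambda}$, is also correct, because both penalty terms vanish at the initial condition.
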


\begin{proof}
Immediate from \eqref{eq:regularized_bounds}.
\end{proof}

This bound applies regardless of how edges are frozen. When many edges incident to free vertices have frozen restriction maps, the effective spectral gap $\mu_{\mathrm{eff}}$ in Proposition~\ref{prop:opinion_stagnation} approaches zero and that bound becomes ineffective. The regularization term $\frac{\lambda}{2}\|y - y_0\|^2$ arrests opinion drift by penalizing deviation from initial beliefs.

Together, these corollaries show that regularization provides a universal safety net: for any choice of stubborn vertices and frozen edges, the regularized dynamics keep both opinions and structure within distance $O(1/\sqrt{\lambda})$ and $O(1/\sqrt{\mu})$ of their initial values.


\section{Equilibrium Calculations for the Four Scenarios}\label{app:example-calculations}

We compute the equilibria of the joint dynamics for the single-edge example from Figure~\ref{fig:four-scenarios}. The edge $e = \{u, v\}$ has stalks $\mathcal{F}(u) = \mathbb{R}^2$, $\mathcal{F}(v) = \mathcal{F}(e) = \mathbb{R}$, with the first coordinate of $\mathcal{F}(u)$ stubborn. Writing $\mathcal{F}_{u \unlhd e} = [a \; b]$ and $\mathcal{F}_{v \unlhd e} = [c]$, the initial configuration is
\[
x_u = \begin{bmatrix} 1 \\ 1 \end{bmatrix}, \quad x_v = -1, \quad a = b = \tfrac{1}{2}, \quad c = 1,
\]
so that the initial expressed opinions are $+1$ from $u$ and $-1$ from $v$: equal in magnitude, opposite in sign. Let $y_u$ denote the free component of $x_u$. The discrepancy is $d_e = c x_v - a - b y_u$, and with $\alpha = \beta = 1$ the dynamics are
\[
\frac{dy_u}{dt} = b d_e, \quad \frac{dx_v}{dt} = -c d_e, \quad \frac{da}{dt} = d_e, \quad \frac{db}{dt} = y_u d_e, \quad \frac{dc}{dt} = -x_v d_e,
\]
where variables are held constant according to each scenario.

\textbf{Scenario 2 (Structural Stubbornness).} Both restriction maps are frozen at $a = b = \tfrac{1}{2}$, $c = 1$. The discrepancy $d_e = x_v - \tfrac{1}{2} - \tfrac{1}{2} y_u$ drives the two-dimensional linear system $\frac{dy_u}{dt} = \tfrac{1}{2} d_e$, $\frac{dx_v}{dt} = -d_e$. The equilibrium condition $d_e = 0$ defines the line $x_v = \tfrac{1}{2}(1 + y_u)$ of global sections. Since the dynamics are gradient descent on $\tfrac{1}{2}d_e^2$, the equilibrium is the orthogonal projection of $(1, -1)$ onto this line. Minimizing $(y_u - 1)^2 + (\tfrac{1}{2}(1 + y_u) + 1)^2$ yields
\[
y_u^* = \tfrac{1}{5}, \qquad x_v^* = \tfrac{3}{5}, \qquad \text{expressed value} = \tfrac{3}{5}.
\]

\textbf{Scenario 3 (Accommodation).} The map from $u$ is frozen ($a = b = \tfrac{1}{2}$) while $c$ evolves. The key observation is that quotients of rates yield conserved quantities. From $\frac{dx_v}{dt}/\frac{dc}{dt} = c/x_v$, we obtain $x_v \, dx_v = c \, dc$, hence $x_v^2 - c^2 = 0$, giving $x_v = -c$ (taking the branch consistent with initial signs). From $\frac{dy_u}{dt}/\frac{dc}{dt} = -b/(2x_v) = 1/(2c)$, we get $dy_u = dc/(2c)$, hence $y_u - \tfrac{1}{2}\ln c = 1$. At equilibrium $d_e = 0$: $c x_v = \tfrac{1}{2}(1 + y_u)$, so $-c^2 = \tfrac{1}{2}(2 + \tfrac{1}{2}\ln c)$. Rearranging gives the transcendental equation $\ln c = -4c^2 - 4$, whose unique positive solution is $c^* \approx 0.02$. Then
\[
x_v^* = -c^* \approx -0.02, \qquad y_u^* = 1 + \tfrac{1}{2}\ln c^* \approx -1.00, \qquad \text{expressed value} \approx 0.
\]

\textbf{Scenario 4 (Outreach).} The map from $v$ is frozen ($c = 1$) while $a$ and $b$ evolve. From $\frac{dx_v}{dt} + \frac{da}{dt} = 0$, we get $x_v + a = -\tfrac{1}{2}$. From $\frac{dy_u}{dt}/\frac{db}{dt} = b/y_u$, we get $y_u^2 - b^2 = \tfrac{3}{4}$. A third conserved quantity arises from $da/dy_u = 1/b$. Initially $b = \tfrac{1}{2} > 0$, so $b = +\sqrt{y_u^2 - \tfrac{3}{4}}$ and integrating gives $a - \ln(y_u + \sqrt{y_u^2 - \tfrac{3}{4}}) = \tfrac{1}{2} - \ln\tfrac{3}{2}$. As the trajectory evolves, $y_u$ decreases toward $\sqrt{3}/2$ where $b = 0$, then $b$ becomes negative. For $b < 0$, the sign flips: $b = -\sqrt{y_u^2 - \tfrac{3}{4}}$ and continuity at the crossing gives $a + \ln(y_u + \sqrt{y_u^2 - \tfrac{3}{4}}) = \tfrac{1}{2} - \ln 2$. At equilibrium $d_e = 0$: $x_v = a + b y_u$, combined with $x_v = -\tfrac{1}{2} - a$ yields $b y_u = -\tfrac{1}{2} - 2a$. Solving numerically on the $b < 0$ branch:
\[
y_u^* \approx 0.88, \quad b^* \approx -0.13, \quad a^* \approx -0.19, \quad x_v^* \approx -0.31, \quad \text{expressed value} \approx -0.31.
\]

\textbf{Scenario 1 (Universal Adaptation).} All of $a$, $b$, $c$ evolve. As in Scenario~3, $x_v^2 - c^2 = 0$ gives $x_v = -c$. As in Scenario~4, $y_u^2 - b^2 = \tfrac{3}{4}$. From $\frac{dc}{dt}/\frac{da}{dt} = -x_v = c$, we get $dc/c = da$, hence $c = e^{a - \frac{1}{2}}$. The trajectory again crosses from $b > 0$ to $b < 0$. At equilibrium $d_e = 0$: $-c^2 = a + b y_u$, so $-e^{2a-1} = a + b y_u$. With $b = -\sqrt{y_u^2 - \tfrac{3}{4}}$ and the conserved quantities, numerical solution yields:
\[
y_u^* \approx 0.87, \quad b^* \approx -0.10, \quad a^* \approx -0.17, \quad c^* \approx 0.51, \quad x_v^* \approx -0.51, \quad \text{expressed} \approx -0.26.
\]


\section{Notation Reference}\label{app:notation}

For convenience, we collect the principal notation used throughout the paper.

\begin{table}[t]
\centering
\caption{Summary of notation.}
\label{tab:notation}
\begin{tabular}{@{}cl@{}}
\toprule
Symbol & Description \\
\midrule
\multicolumn{2}{l}{\textit{Graph and Sheaf Structure}} \\[2pt]
$G = (V, E)$ & Network graph with vertices $V$ and edges $E$ \\
$\mathcal{F},\,\mathcal{F}(v),\,\mathcal{F}(e),\,\mathcal{F}_{v\unlhd e}$ 
& Discourse sheaf with vertex/edge stalks and restriction maps \\[4pt]
\midrule
\multicolumn{2}{l}{\textit{Cochain Spaces and Operators}} \\[2pt]
$C^0(G;\mathcal{F}),\,C^1(G;\mathcal{F})$ & Spaces of 0- and 1-cochains \\
$\delta$ & Coboundary operator $C^0\to C^1$ \\
$L_{\mathcal{F}}=\delta^T\delta$ & Sheaf Laplacian \\
$H^0(G;\mathcal{F}),\,H^0(G,U;\mathcal{F})$ 
& (Relative) zeroth cohomology / global sections \\[4pt]
\midrule
\multicolumn{2}{l}{\textit{Stubborn Opinions}} \\[2pt]
$U \subseteq V$ & Set of stubborn vertices \\
$F = V \setminus U$ & Set of free vertices \\
$S_v$, $T_v$ & Stubborn and free subspaces at vertex $v$ \\
$\mathcal{Q}$ & Sheaf of free opinions \\
$\iota_S,\,\iota_{\mathcal{Q}},\,P_S,\,P_{\mathcal{Q}}$
& Inclusions and projections for stubborn/free subspaces \\
$\tilde u,\,\tilde y$ & Embedded stubborn and free opinions \\
$L_{SS}, L_{SQ}, L_{QS}, L_{QQ}$ & Block decomposition of $L_{\mathcal{F}}$ \\
$L_{\mathcal{Q}}$ & Laplacian of $\mathcal{Q}$ (equals $L_{QQ}$) \\[4pt]
\midrule
\multicolumn{2}{l}{\textit{Stubborn Expressions}} \\[2pt]
$\mathcal{I}$ & Set of adapting incidences \\
$V_{\mathrm{maps}}$ & Space of adapting restriction maps \\
$A$, $c$ & Linear operator and constant encoding discrepancy \\
$\mathcal{H}^x$ & Sheaf of structures \\
$\mathcal{H}^x_{\mathcal{I}}$ & Sheaf of free structures \\
$\mathcal{M}$ & Constraint subspace for structure velocities \\
$\Pi_{\mathcal{M}}$ & Orthogonal projection onto $\mathcal{M}$ \\[4pt]
\midrule
\multicolumn{2}{l}{\textit{Edge Classification}} \\[2pt]
$E_{FF}$, $E_{UU}$, $E_{UF}$ & Edges by endpoint type (both free, both stubborn, mixed) \\
$E_S$ & Type S edges (symmetric adaptation) \\
$E_A$ & Type A edges (asymmetric adaptation) \\[4pt]
\midrule
\multicolumn{2}{l}{\textit{Energy and Dynamics}} \\[2pt]
$\Psi(y, \delta)$ & Total disagreement energy, $\frac{1}{2}\|\delta x\|^2$ \\
$\mathcal{L}_{\lambda,\mu}$ & Regularized energy functional \\
$Q_{vv}(t)$ & Conserved quantity matrix \\[4pt]
\midrule
\multicolumn{2}{l}{\textit{Parameters}} \\[2pt]
$\alpha$, $\beta$ & Opinion and structure update rates \\
$\lambda$, $\mu$ & Regularization parameters \\
$\lambda_{\mathrm{eff}}$, $\mu_{\mathrm{eff}}$ & Effective spectral gaps (trajectory-dependent dissipation ratios) \\
$\rho_-$, $\rho_+$ & Regime threshold ratios \\
$B_x$, $B_\delta$ & Bounds on opinion and restriction map norms \\
$\varepsilon$ & Displacement tolerance \\
\bottomrule
\end{tabular}
\end{table}

\end{document}